\documentclass[a4paper, 11pt]{amsart}
\usepackage[top=1.2in, bottom=1.2in, left=.9in, right=.9in]{geometry}
\usepackage{amsmath, amssymb,amsmath,amscd,amsfonts,amsthm,mathrsfs, graphicx}
\usepackage[arrow,matrix,curve,cmtip,ps]{xy}
\usepackage{enumitem}
\usepackage{pinlabel}
\usepackage{tikz-cd}
\usepackage{color}
\usepackage{xcolor}
\usepackage{caption}
\usepackage{subcaption}
\usepackage{soul}

\usepackage{xfrac}% for \sfrac

\usepackage{cancel}
\usepackage{comment}

\usepackage{setspace}
\usepackage{relsize}

\usepackage{centernot}

\usepackage[dvipsnames]{xcolor}
\usepackage[pagebackref,hypertexnames=false,colorlinks=true,
            linkcolor=NavyBlue,
            citecolor=NavyBlue,
            urlcolor=NavyBlue]{hyperref} 
\usepackage{cleveref}
\usepackage[alphabetic]{amsrefs}%%%CHIRS:  I MOVED THIS TO AFTER HYPERREF TO AVOID A COMPILER ERROR.  I DO NOT UNDERSTAND WHY THIS WORKS

\newcommand{\CAT}{\operatorname{CAT}}

\usepackage{pinlabel}	
\newtheorem{theorem}{Theorem}[section]
\newtheorem{proposition}[theorem]{Proposition}
\newtheorem{corollary}[theorem]{Corollary}
\newtheorem{lemma}[theorem]{Lemma}
\newtheorem{remark}[theorem]{Remark}

\newtheorem{problem}[theorem]{Problem}

\newtheorem*{theorem*}{Theorem}
\newtheorem*{proposition*}{Proposition}
\newtheorem*{lemma*}{Lemma}
\newtheorem*{corollary*}{Corollary}
\newtheorem*{rep@theorem}{\rep@title}
\newcommand{\newreptheorem}[2]{
\newenvironment{rep#1}[1]{
 \def\rep@title{#2 \ref{##1}}
 \begin{rep@theorem}}
 {\end{rep@theorem}}}
\makeatother
\newtheorem*{rep@proposition}{\rep@title}
\newcommand{\newrepproposition}[2]{
\newenvironment{rep#1}[1]{
 \def\rep@title{#2 \ref{##1}}
 \begin{rep@proposition}}
 {\end{rep@proposition}}}
\makeatother

\theoremstyle{definition}
\newtheorem{definition}[theorem]{Definition}
\newtheorem{question}[theorem]{Question}

\newreptheorem{theorem}{Theorem}
\newreptheorem{lemma}{Lemma}
\newreptheorem{proposition}{Proposition}
\newreptheorem{corollary}{Corollary}

\newcommand{\bdry}{\partial}

\newcommand{\Q}{\mathbb{Q}}
\newcommand{\N}{\mathbb{N}}
\newcommand{\Z}{\mathbb{Z}}

\newcommand{\lk}{\operatorname{lk}}

\newcommand{\Id}{\operatorname{Id}}

\newcommand{\Bl}{\mathcal{B}\ell}

\newcommand{\PD}{\operatorname{PD}}

\renewcommand{\hbar}{\overline{h}}

\newcommand{\Swap}{S}%{\operatorname{Sw}}
\newcommand{\swap}{\Swap}%{\operatorname{Sw}}

\newcommand{\Sum}{\displaystyle \sum}

\newcommand{\pref}[1]{(\ref{#1})}

\newcommand{\noteta}{h}

\begin{document}
\title[Lifting Milnor Invariants for 3-Component Links]{Lifting Milnor Invariants for 3-Component Links}
%Send to: mathematische annalen 

\author{Christopher W.\ Davis}
\address{Department of Mathematics, University of Wisconsin--Eau Claire}
\email{daviscw@uwec.edu}

\author{JungHwan Park}
\address{Department of Mathematical Sciences, KAIST}
\email{jungpark0817@kaist.ac.kr}

% \thanks{$^{\dag}$This work was partially supported by Samsung Science and Technology Foundation SSTF-BA2102-02 and the NRF grant RS-2025-00542968.}

\date{\today}

%\subjclass[2000]{57M25}
\def\subjclassname{\textup{2020} Mathematics Subject Classification}
\expandafter\let\csname subjclassname@1991\endcsname=\subjclassname
%\expandafter\let\csname subjclassname@2000\endcsname=\subjclassname
\subjclass{57K10}

\begin{abstract}
We define a sequence of integer-valued invariants $\gamma^k(L)$ for a $3$-component link $L$. We prove that the resulting $\gamma$-invariants are invariant under concordance, and more generally under weak cobordism, and that they lift
%Note from Chris:  I changes "it lifts" to "they lift" to agree with "invariants"
certain Milnor invariants of 3-component links. To establish this, we introduce an invariant $h(L)$, a $3$-component analogue of the Kojima--Yamasaki $\eta$-invariant, and show that it recovers the $\gamma$-invariants. As applications, we obtain a weak-cobordism classification when the distinguished component has trivial Alexander polynomial and characterize knots that bound continuously embedded disks in $B^4$ whose complements have fundamental group $\mathbb{Z}$.
\end{abstract}

\dedicatory{Dedicated to the memory of Tim D. Cochran.}

\maketitle
%==============================================================

\section{Introduction}

In 1954, John Milnor introduced the $\bar{\mu}$-invariants, higher-order linking numbers associated to a link
$L=(L_1,\ldots,L_m)\subset S^3$. They are defined inductively by comparing the lower central series of the
link group $\pi_1(S^3\smallsetminus L)$ with that of the unlink~\cite{Milnor1954}. Since then, they have been
studied extensively. For instance, building on Stallings' work~\cite{Sta65}, Casson showed that Milnor invariants are invariants of topological locally flat link concordance~\cite{Cas75}.  They have also been reinterpreted in a variety of ways; see, e.g.,~\cites{Dw, Tur76, Porter80, Orr89, Coc90, Hab1, HM00, CST14}, and have since been developed in several different directions; see, e.g.,~\cites{Orr:1991, Krushkal:1998, Cha:2006, CST:2012, CST:2017, Cha:2018, DNOP:2020, Park-Powell:2022, Stees:2025, Kuzbary:2024, Cha-Orr:2024, CST:2025}.

These invariants generalize the classical linking number. Indeed, the length-two Milnor invariants
$\bar{\mu}_L(ij)$ coincide with the pairwise linking numbers of the components $L_i$ and $L_j$ of $L$.
Given an $m$-component link $L$ and a multi-index $I=(i_1,i_2,\dots,i_n)$ with each
$i_k\in\{1,\ldots,m\}$, the length-$n$ Milnor invariant $\bar{\mu}_L(I)$ is defined modulo the greatest
common divisor of the lower-length invariants $\bar{\mu}_L(I')$ over all proper subindices $I'\subset I$.
For example, if $L$ is a two-component link with linking number $\bar{\mu}_L(12)=1$, then every
higher-order Milnor invariant of $L$ is completely indeterminate.

In 1985, Tim Cochran~\cite{C3} discovered a beautiful method for lifting Milnor invariants
of $2$-component links with vanishing linking number, introducing a sequence of integer-valued
invariants $\bigl(\beta^1(L),\beta^2(L),\dots\bigr)\in \Z^\infty$, called the \emph{$\beta$-invariants}.
These give lifts of certain Milnor invariants in the sense that
\[
\beta^k(L)\equiv \bar{\mu}_L\bigl(1^{2k}22\bigr)
\]
modulo the indeterminacy of Milnor invariants. (Here $1^{2k}=11\cdots 1$ is the multi-index of
length $2k$ whose every entry is $1$.)

In this article, we extend Cochran's idea to $3$-component links and produce a sequence
\[ \bigl(\gamma^0(L),\gamma^1(L),\gamma^2(L),\dots\bigr)\in \Z^\infty, \] which we call the \emph{$\gamma$-invariants}. These invariants are well-defined modulo a certain action on $\Z^\infty$.
The $\gamma$-invariants lift Milnor invariants in the sense that
\[
\gamma^k(L)\equiv \bar{\mu}_L\bigl(1^k23\bigr),
\]
 modulo the indeterminacy of Milnor invariants.  {In \cite{Tsukamoto-Yasuhara:2007}, Tsukamoto and Yasuhara consider the same
sequence under the notation $\alpha^k_F(L_2,L_3)$, where $F$ is a Seifert surface for $L_1$. They use this sequence to produce a factorization of the Conway polynomial of $L$, whereas our focus is to produce a concordance invariant that lifts Milnor's invariants.}

To state the main theorem precisely, we first introduce some terminology and notation. Unless otherwise specified, all links under consideration are ordered, oriented $3$-component links
$L=(L_1,L_2,L_3)\subset S^3$. 
We refer to the first component $L_1$ as the \emph{distinguished} component, and we further assume that
$\lk(L_1,L_2)=\lk(L_1,L_3)=0$.
We define $\gamma^0(L)$ to be the linking number of $L_2$ and $L_3$, that is,
\[
\gamma^0(L):=\lk(L_2,L_3)=\bar{\mu}_L(23).
\]
To define the higher invariants $\gamma^k(L)$, we require the notion of \emph{derivatives of links},
a geometric construction introduced in~\cite{C3}.

Given a link $L$, the derivative link $D(L)$ is defined by first finding Seifert surfaces $G_1$ and $G_2$ for $L_1$ and $L_2$ that intersect in a knot, with the extra assumption that $G_1$ is disjoint from $L_3$. The result of pushing this knot off of $G_1$ in the positive normal direction is denoted $L_{12}$. Replacing $L_2$ with $L_{12}$ yields a new $3$-component
link:
\[
D(L)=(L_1,L_{12},L_3).
\]
More generally, we define the $k$-fold derivative by iterating this process, obtaining
\[
D^k(L)=\bigl(L_1,L_{1^k2},L_3\bigr).
\]
We then define
\[
\gamma^k(L):=\bar{\mu}_{D^k(L)}(23)=\lk(L_{1^k2},L_3)
\]
and let $\gamma(L)=\bigl(\gamma^0(L),\gamma^1(L),\dots\bigr)\in \Z^\infty$ be the sequence of these
$\gamma$-invariants.

We remark that it is important to fix $G_1$  throughout this iterative process. Indeed, the precise value of the $\gamma$-invariants  depends on this choice of $G_1$, which introduces an indeterminacy.  When we want to emphasize this dependence on the Seifert surface we will write $D(L, G_1) = (L_1,L_{12}, L_3, G_1)$ and $\gamma^k(L, G_1)$. While we discuss this indeterminacy in more detail
in Section~\ref{sect eta invt}, we briefly summarize it here. Let $T\colon \Z^\infty\to\Z^\infty$ be the
\emph{right-shift operator},
\[
T(a^0,a^1,\dots):=(0,a^0,a^1,\dots).
\]
The sequence $\bigl(\gamma^0(L),\gamma^1(L),\dots\bigr)$ is an invariant of the concordance class of $L$
modulo the action of $T+\mathrm{Id}$.

\begin{theorem}\label{thm:main}
Let $L=(L_1,L_2,L_3)$ be a $3$-component link with a distinguished component. Then
\[
\gamma(L)=\bigl(\gamma^0(L),\gamma^1(L),\ldots\bigr)\in \Z^\infty
\]
is a link concordance invariant modulo the action of $T+\mathrm{Id}$. Moreover, we have
\[
\gamma^k(L)\equiv \bar{\mu}_L(1^k23)
\pmod{\gcd\left\{\, \bar{\mu}_L(1^i23)\mid i<k \,\right\}}.
\]
\end{theorem}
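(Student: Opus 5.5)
Following the abstract, I would prove both parts at once through an $\eta$-type invariant. Let $X$ be the infinite cyclic cover of $S^3\smallsetminus L_1$ obtained by cutting along $G_1$, and let $\Lambda=\Z[t^{\pm1}]$. Since $\lk(L_1,L_2)=\lk(L_1,L_3)=0$, both $L_2$ and $L_3$ lift to $X$. Let $\tilde L_2,\tilde L_3$ be lifts, and define the equivariant linking number
\[
h(L)=\sum_{n\in\Z}\lk_X\bigl(\tilde L_2,t^n\tilde L_3\bigr)t^n .
\]
This is an element of $\Lambda$, or more generally of $\Q(t)$ after inverting the Alexander polynomial of $L_1$. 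Changing the chosen lifts multiplies $h(L)$ by a unit $\pm t^m$. The key steps, in order, are the following.

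\textbf{Step 1: recover $\gamma$ from $h$.} The curve $L_{12}$ is the pushoff of $G_1\cap G_2$. In $X$ this is the boundary of the lift of $G_2$ meeting the translates of $G_1$, so it represents $(t-1)$ times a lift of $L_2$ in the relevant homology. Linking $L_{1^k2}$ with $L_3$ then gives the coefficient in the expansion of $h(L)$ in powers of $x=t-1$. Concretely, write $h(L)$ as a power series $\sum_k \gamma^k(L)x^k$ after substituting $t=1+x$. This substitution is legitimate because the Alexander polynomial satisfies $\Delta(1)=\pm1$, so $\Delta$ is a unit in $\Z[[x]]$.

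\textbf{Step 2: concordance invariance and indeterminacy.} Changing $G_1$, or applying a concordance, changes the lifts in $X$ and the torsion corrections. I would use a $4$-dimensional argument: a concordance gives a cover of the complement of an annulus with surfaces in it. The expected conclusion is that $h$ is well defined up to multiplication by units $\pm t^m$ and by norms $f(t)f(t^{-1})$ coming from the Alexander module. Units $t^{m}=(1+x)^m$ act on the power series by powers of $T+\mathrm{Id}$, since multiplication by $x$ is the right shift $T$. This matches the stated action.

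\textbf{Step 3: the Milnor congruence.} I would use the Magnus expansion of the longitude of $L_3$ in $\pi_1(S^3\smallsetminus L)$. Mapping $x_1\mapsto t$ and killing $[x_2,x_2]$-type terms, the coefficient of $X_1^kX_2$ in the longitude of $L_3$ equals $\bar\mu(1^k23)$. Comparing with the Fox-calculus description of equivariant linking gives $\gamma^k\equiv\bar\mu(1^k23)$ modulo lower terms. An alternative route is Cochran's argument: show that $\bar\mu_{D(L)}(1^{k-1}23)\equiv\bar\mu_L(1^k23)$ using the fact that derivatives realize Massey products, then induct down to $\bar\mu_{D^k(L)}(23)=\lk$. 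Either way, the indeterminacy $\gcd\{\bar\mu(1^i23)\mid i<k\}$ matches the ambiguity introduced by multiplying by $(1+x)^m$.

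\textbf{Main obstacle.} I expect the hardest step to be Step 2's control of the ambiguity. The issue is showing that changing the Seifert surface, or passing through a concordance, alters $h$ only by units, and not by arbitrary elements of $\Q(t)$, so that the ambiguity is exactly the $(T+\mathrm{Id})$-action. The difficulty arises when $L_1$ has nontrivial Alexander polynomial. I would try to handle it by working with the Blanchfield form and showing that the correction terms vanish in $\Z[[x]]$.
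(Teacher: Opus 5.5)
Your architecture matches the paper's: an equivariant linking function in the infinite cyclic cover of $S^3\smallsetminus L_1$, its expansion in $x=t-1$, and Massey products for the Milnor congruence. The gap is in Step~2, which carries the concordance invariance and is aimed at the wrong target. If $h$ were only determined up to signs and norms $f(t)f(t^{-1})$, the theorem as stated would not follow. For $f(t)=2-t$ one gets $f(t)f(t^{-1})=5-2t-2t^{-1}=1-2x^2+2x^3-\cdots$. This fixes $\gamma^0,\gamma^1$ and replaces $\gamma^2$ by $\gamma^2-2\gamma^0$. When $\gamma^0\neq 0$, no power $(1+x)^n$ does this, since comparing $x^1$-coefficients forces $n=0$. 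A sign $-1$ is likewise outside the $(T+\mathrm{Id})$-orbit. So your claim that this ``matches the stated action'' is unjustified, and the proposed repair via the Blanchfield form is not an argument. What you need is that the ambiguity is exactly multiplication by $t^m$. This comes from an exact identity, not from controlling correction terms.

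Here is the missing argument. Define $h(L_1,\widetilde{L_2},\widetilde{L_3})=\Delta(t)^{-1}\sum_k (X,t^k\widetilde{L_3})t^k\in\Q(t)$ with $\partial X=\Delta(t)\widetilde{L_2}$. This is a single rational function, not a generating function of individual rational linking numbers, and it need not lie in $\Lambda$. It makes no reference to $G_1$; the surface only selects the lifts. Changing $G_1$ therefore multiplies $h$ by $t^{n_2-n_3}$, with no sign since orientations are fixed. Now take a concordance $(A,Y_2,Y_3)$, and let $X'$, $\Delta'$ be the analogous data for $L'$. Lift the $Y_i$ to $\widetilde{E(A)}$ with $\partial\widetilde{Y_i}=\widetilde{L_i}-\widetilde{L_i'}$, and form the $2$-cycle $Q=\Delta' X-\Delta\Delta'\widetilde{Y_2}-\Delta X'$. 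Since $E(A)$ is a homology circle, $H_2(\widetilde{E(A)})$ is $\Lambda$-torsion. Hence $q(t)Q$ bounds for some $q$, and $(q(t)Q,t^k\widetilde{Y_3})=0$ for all $k$. Because $Y_2\cap Y_3=\emptyset$, every translate of $\widetilde{Y_2}$ misses every translate of $\widetilde{Y_3}$, so only the two ends contribute. This gives $\Delta'\sum_k(X,t^k\widetilde{L_3})t^k=\Delta\sum_k(X',t^k\widetilde{L_3'})t^k$ on the nose, and a nontrivial Alexander polynomial causes no trouble.

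Separately, in Step~1 you have the relation backwards. Cutting the lift of $G_2$ along the lifts of $G_1$ gives a chain $H$ with $\partial H=\widetilde{L_2}-(t-1)\widetilde{L_{12}}$. When $\widetilde{L_2},\widetilde{L_3}$ lie in the same lift of $S^3\smallsetminus G_1$, the chain $H$ meets $t^k\widetilde{L_3}$ only for $k=0$, with count $\lk(L_2,L_3)$. This yields the recursion $h(L_1,\widetilde{L_2},\widetilde{L_3})=\lk(L_2,L_3)+(t-1)\,h(L_1,\widetilde{L_{12}},\widetilde{L_3})$, which is what actually produces the expansion.

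Your Massey-product route in Step~3 is essentially the paper's. The paper does it directly rather than by induction on $D(L)$: it builds a defining system from Seifert surfaces of the iterated derivatives and evaluates it on the torus around $L_3$.
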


We remark that, given two sequences of $\gamma$-invariants, it is straightforward to determine whether they represent the same element modulo the action of $T+\mathrm{Id}$; see Corollary~\ref{cor:lift}. In fact, we will see that if $\gamma^k(L)$ is the first nonvanishing $\gamma$-invariant of $L$, then $\gamma^k$ is an invariant\footnote{The invariance of this first nonvanishing term was also proved in \cite[Corollary~1.5]{Tsukamoto-Yasuhara:2007}.} and $\gamma^{k+1}$ is well defined modulo $\gamma^k$, as in the case of Milnor invariants. After making a normalization to account for this indeterminacy, $\gamma^n\in \Z$ is well defined for all $n>k+1$. Therefore, we do indeed obtain examples that are indistinguishable by Milnor invariants but can be distinguished by the $\gamma$-invariants; see Section~\ref{subsec:computations}.

Cochran~\cite{C3} further related his $\beta$-invariants to a $\Q(t)$-valued invariant $\eta(L)$ due to
Kojima--Yamasaki~\cite{KojYam79}, where $\Q(t)$ denotes the field of rational functions. This invariant
can be interpreted as a self-linking invariant of the lift of $L_2$ to $\widetilde{E(L_1)}$, the infinite
cyclic cover of the exterior of $L_1$. Our proof of the invariance of $\gamma(L)$ proceeds by defining
$\noteta(L)\in \Q(t)$, an analogue of $\eta$ for $3$-component links, checking the invariance of  $\noteta(L)$, and then establishing the connection between $\noteta$ and $\gamma$.

More precisely, we define $\noteta(L)$ in terms of the linking of chosen lifts of $L_2$ and $L_3$ in $\widetilde{E(L_1)}$. Since this quantity depends on the choice of lifts, it is defined only up to an indeterminacy. In Proposition~\ref{prop h is invariant}, we show that $\noteta(L)$ nevertheless determines a well-defined element of $\sfrac{\Q(t)}{\doteq}$, where $\doteq$ is the equivalence relation generated by
\[
p(t)\doteq t p(t).
\]
We then prove that the Taylor coefficients of $\noteta(L)$ at $t=1$ recover the $\gamma$-invariants of~$L$.

\begin{theorem}\label{thm: gamma as h}
Let $L=(L_1,L_2,L_3)$ be a $3$-component link with a distinguished component. Then $\noteta(L)\in \sfrac{\Q(t)}{\doteq}$ is a link concordance invariant, and
\[
\noteta(L)\doteq \sum_{k=0}^\infty \gamma^k(L)\,(t-1)^k.
\]
\end{theorem}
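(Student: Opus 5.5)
We first fix the definition of $\noteta(L)$. Let $X=\widetilde{E(L_1)}$ be the infinite cyclic cover of $E(L_1)$, with deck generator $t$. Since $\lk(L_1,L_2)=\lk(L_1,L_3)=0$, the curves $L_2,L_3$ lift to closed curves $\widetilde L_2,\widetilde L_3\subset X$. The module $H_1(X;\Q)$ is $\Q[t^{\pm1}]$-torsion, so some nonzero $p(t)$ annihilates $[\widetilde L_2]$. Choose a $2$-chain $C$ with $\partial C=p(t)\widetilde L_2$ and set
\[
\noteta(L)=\frac{1}{p(t)}\sum_{n} \bigl(C\cdot t^n\widetilde L_3\bigr)\,t^{n}\in\Q(t).
\]
Independence of $C$ and $p$ is the usual equivariant-linking argument: $H_2$ of a cyclic cover of a knot exterior carries no closed classes meeting $\widetilde L_3$ essentially. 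Changing the lifts multiplies $\noteta$ by powers of $t$, which is exactly the relation $\doteq$; this is Proposition~\ref{prop h is invariant}.

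For concordance invariance, take a concordance $(A_1,A_2,A_3)\subset S^3\times I$. The exterior of $A_1$ is a $\Q$-homology cobordism (over $\Q[t^{\pm1}]$ localized, since Alexander polynomials evaluate to $\pm1$ at $t=1$) between $E(L_1)$ and $E(L_1')$. In its infinite cyclic cover, the lifted annuli $\widetilde A_2,\widetilde A_3$ join the lifts at the two ends. A chain bounding $p(t)\widetilde A_2$ (relative to the boundary chains) has intersection with $\widetilde A_3$ giving a $3$-dimensional cobordism of the intersection data. Hence the equivariant linking agrees at both ends, up to the choice of lifts.

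For the formula, choose the Seifert surface $G_1$ disjoint from $L_3$, cut $S^3$ along $G_1$, and build $X$ from copies $t^iY$ of $Y=S^3\smallsetminus G_1$. The key step is a recursion: I claim
\[
\noteta(L_1,L_2,L_3)\doteq \lk(L_2,L_3)+(t-1)\,\noteta(L_1,L_{12},L_3).
\]
To see this, take $G_2$ with $G_1\cap G_2=\gamma$ and $L_{12}=\gamma^+$. The lift of $G_2$ is cut along $\widetilde G_1$ into pieces lying in one fundamental domain, so its boundary is
\[
\widetilde L_2+(t-1)\widetilde{L_{12}}
\]
up to sign and convention, because the arcs along $\gamma$ appear on the two sides $t^{0}$ and $t^{1}$. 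Intersecting this piece of $G_2$ with $\sum t^n\,t^n\widetilde L_3$ gives only the $n=0$ term $\lk(L_2,L_3)$, since $L_3$ misses $G_1$. Thus the recursion is $\noteta(\widetilde L_2)=\lk(L_2,L_3)+(t-1)\noteta(\widetilde{L_{12}})$, using linearity of equivariant linking in the first argument.

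Iterating the recursion with the same $G_1$ gives
\[
\noteta(L)\doteq\sum_{k<N}\gamma^k(L)(t-1)^k+(t-1)^N\noteta(D^NL).
\]
Because $\noteta(D^NL)$ has a denominator dividing the Alexander polynomial of $L_1$, which is nonvanishing at $t=1$, it is regular at $t=1$. Taking Taylor expansions at $t=1$ therefore yields the identity in $\Q[[t-1]]$.

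The main obstacle is the sign and shift bookkeeping in the recursion, namely making the boundary formula $\widetilde L_2+(t-1)\widetilde{L_{12}}$ exact, including the choice of the positive pushoff. Equally delicate is ensuring that the relation $\doteq$ interacts correctly with the Taylor expansion, since multiplication by $t=1+(t-1)$ is exactly the $T+\mathrm{Id}$ action.
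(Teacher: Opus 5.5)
Your proposal is correct and follows the paper's route. For invariance, you pair a torsion (hence eventually bounding) $2$-cycle built from $X$, the lifted cobordism of $L_2$, and $X'$ against the disjoint lifted cobordism of $L_3$ in the cover of the annulus exterior; the paper runs this for weak cobordism, which covers concordance. For the formula, you use the recursion $h(L_1,\widetilde{L_2},\widetilde{L_3})=\lk(L_2,L_3)+(t-1)\,h(L_1,\widetilde{L_{12}},\widetilde{L_3})$ coming from the cut-open lift of $G_2$, iterated with $G_1$ fixed and combined with regularity at $t=1$.

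Two bookkeeping points need fixing.

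First, the recursion and its iterate must be genuine equalities in $\Q(t)$ for lifts lying in one fundamental domain of $S^3\smallsetminus G_1$, as your explicit computation in fact gives. A recursion holding only up to $\doteq$ cannot be iterated or compared coefficientwise.

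Second, with the paper's conventions the cut-open lift of $G_2$ has boundary $\widetilde{L_2}-(t-1)\widetilde{L_{12}}$. That is what produces the plus sign you want; your stated boundary $\widetilde{L_2}+(t-1)\widetilde{L_{12}}$ would flip it.
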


\begin{remark}\label{rmk:iequiv}
The reader may notice that we have not specified whether we are working in the smooth or  locally flat category. This is because the invariants considered here are in fact invariants of the weakest natural notion of concordance for tame links, namely $I$-equivalence. For convenience, we work primarily in the smooth category.  In Remark~\ref{rmk: what if I-equiv} we explain how to remove the smoothness assumptions.
\end{remark}

To obtain the main results, we develop several structural and computational consequences of the $\gamma$-invariants. We place the construction in the framework of \emph{weak cobordism}, a weaker equivalence relation than concordance in which the distinguished components cobound an annulus and the remaining components cobound disjoint surfaces subject to a  homological condition. In this setting the derivative construction becomes well defined; see Section~\ref{sec:weakcobordism} for a precise definition. We also work out explicit examples. In Section~\ref{subsec:computations}, we show that arbitrary finitely supported integer sequences can be realized as $\gamma$-invariants, and we develop some computational tools.

% prove that the $\gamma$-invariants admit an explicit formula in terms of a Seifert matrix.

% \begin{theorem}\label{thm:seifert-matrix}
% Let $L=(L_1,L_2,L_3)$ be a $3$-component link whose distinguished component bounds a Seifert surface $G$ disjoint from $L_2$ and $L_3$. Let $\{a_1,\ldots,a_{2g}\}$ be a basis for $H_1(G)$ and let
% $\{\alpha_1,\ldots,\alpha_{2g}\}$ be the dual basis for $H_1(S^3\smallsetminus G)$. Let $V$ be the
% resulting Seifert matrix for $G$, and let $v_2,v_3\in \Z^{2g}$ be the column vectors representing
% $[L_2]$ and $[L_3]$ in $H_1(S^3\smallsetminus G)$ with respect to $\{\alpha_i\}$. Set $A:=V-V^T$.
% Then, for every positive integer $k$,
% \[
% \gamma^k(L,G)
% =
% \Bigl(A^{-1}(VA^{-1})^{k-1}v_2\Bigr)^T v_3.
% \]
% \end{theorem}

We also connect the $\gamma$-invariants to Cochran's $\beta$-invariants by showing that the latter can be recovered explicitly from the $\gamma$-invariants of the $3$-component link obtained by adjoining a $0$-framed push-off of the second component:

\begin{theorem}\label{thm:beta-from-gamma}
Let $L=(L_1,L_2)$ be a $2$-component link with $\lk(L_1,L_2)=0$. Let $G$ be a Seifert surface for
$L_1$ disjoint from $L_2$, let $L_2^0$ denote the $0$-framed push-off of $L_2$, and let
$L^\ast=(L_1,L_2,L_2^0)$. Then
\[
\beta^k(L)=(-1)^k\sum_{j=1}^k \binom{k-1}{j-1}\gamma^{k+j}(L^\ast,G).
\]
\end{theorem}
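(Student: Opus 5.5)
The plan is to reduce both sides to explicit linear-algebraic expressions in a single Seifert matrix for $G$, and then check the identity by a binomial expansion. Fix a basis $\{a_i\}$ of $H_1(G)$ and the Alexander dual basis $\{\alpha_i\}$ of $H_1(S^3\smallsetminus G)$. Let $V$ be the Seifert matrix of $G$, set $A:=V-V^T$, which is invertible because $G$ has connected boundary, and put $B:=VA^{-1}$. Let $v\in\Z^{2g}$ represent $[L_2]\in H_1(S^3\smallsetminus G)$. The $0$-framed push-off $L_2^0$ can be taken in a thin tubular neighborhood of $L_2$ missing $G$, so it represents the same class $v$. The key point is that both the $\gamma$-invariants of $L^\ast$ and Cochran's $\beta$-invariants of $L$ are computed from the same sequence of derived curves $L_{1^k2}$, all built using the fixed surface $G$.

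\textbf{Step 1 (homology class of the derivative).} Choose a Seifert surface $G_2$ for $L_2$ meeting $G$ in the knot $c=G\cap G_2$. For any curve $a$ on $G$, $\lk(L_2,a)=a\cdot_{G} c$. This determines $[c]\in H_1(G)$ as $\pm A^{-1}v$; the point is that intersection on $G$ is encoded by $A$. Pushing $c$ off $G$ in the positive direction gives $L_{12}$, whose class in $H_1(S^3\smallsetminus G)$ is $V[c]$ (or $V^T[c]$, depending on the orientation convention). Iterating, $[L_{1^k2}]=x_k$ with
\[
x_k=A^{-1}B^{k-1}v\in H_1(G)
\]
up to a global sign. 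Therefore
\[
\gamma^m(L^\ast,G)=\lk(L_{1^m2},L_2^0)=\pm\, v^TA^{-1}B^{m-1}v,
\]
since linking a curve on $G$ with a curve off $G$ is the Kronecker pairing.

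\textbf{Step 2 (Cochran's invariant).} Recall that $\beta^k(L)$ is the self-linking of $L_{1^k2}$ with its push-off off $G$, namely $\lk(L_{1^k2},L_{1^k2}^+)$. With the same $G$ fixed throughout, this equals
\[
\beta^k(L)=x_k^TVx_k.
\]

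\textbf{Step 3 (algebra).} Substitute Step 1 into Step 2 and move all matrices to one side using $A^T=-A$ and $B^T=-A^{-1}V^T$. This gives
\[
(B^T)^{k-1}A^{-T}=\pm A^{-1}(V^TA^{-1})^{k-1},
\]
so that
\[
\beta^k=\pm v^TA^{-1}(V^TA^{-1})^{k-1}B^{k}v .
\]
Now $V^TA^{-1}$ differs from $B$ by $\pm I$. With the convention that makes $V^TA^{-1}=B+I$ (equivalently, replacing $A$ by $V^T-V$), the binomial theorem gives
\[
(B+I)^{k-1}B^k=\sum_{j=1}^k\binom{k-1}{j-1}B^{k+j-1}.
\]
Each term $v^TA^{-1}B^{k+j-1}v$ is exactly $\pm\gamma^{k+j}(L^\ast,G)$, which yields the stated formula up to the overall sign $(-1)^k$. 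That sign is produced by the $k-1$ transpositions of $A^{-1}$ together with the one extra transpose coming from $V$.

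\textbf{Main obstacle.} The delicate part is Step 1 and the sign and convention bookkeeping that accompanies it. One must verify carefully that the positive push-off corresponds to $V$ rather than $V^T$, and that the duality between $H_1(G)$ and $H_1(S^3\smallsetminus G)$ is set up compatibly with how $\beta^k$ is defined in~\cite{C3}. Only then does the factor appear as $(B+I)$ rather than $(B-I)$, with the global sign $(-1)^k$. I would first pin this down on the simplest case, a genus-one $G$ with $k=1$, where $\beta^1=-\gamma^2$ can be checked directly. Then I would run the general computation with exactly those conventions.
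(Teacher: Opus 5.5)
Your route is genuinely different from the paper's. The paper uses no Seifert matrix here. It observes that $(L_1,L_{1^k2},L_{1^k2}^+,G)=\Swap D^k\Swap D^k(L^\ast,G)$ and reads the formula off Lemma~\ref{lem:swap}. There the binomial coefficients come from a geometric induction that commutes derivatives in the second and third slots, with the correction term of Claim~\pref{item: DSDS vs SDSD}. Reducing instead to the identity $V^TA^{-1}=B+I$ plus the binomial theorem is a legitimate and more mechanical substitute; that identity is the algebraic shadow of Claim~\pref{item: DSDS vs SDSD}. Your Steps~2 and~3 are correct as far as they go.

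\textbf{The gap.} You treat the decisive sign as a convention you may choose, but the definitions force it. Once $V_{ij}=\lk(a_i,a_j^+)$, the map $i^+_*$ is $V$ by definition, so the $V$ versus $V^T$ worry is not independent. The only real sign is $\epsilon$ in $[L_2]=\epsilon\,(i^+_*-i^-_*)[G\cap G_2]$, which is fixed once the derivative is oriented by $n_{G_1}\times n_{G_2}$ and pushed along $n_{G_1}$. Take $A=V-V^T$ and $B=VA^{-1}$. Then $\gamma^m(L^\ast,G)=\epsilon^m v^TA^{-1}B^{m-1}v$, so the signs are not global. Meanwhile $\beta^k=(-1)^k v^TA^{-1}(B-I)^{k-1}B^kv$. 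Hence $\beta^2=\gamma^4-\epsilon\gamma^3$. For $\epsilon=+1$ you would get $\beta^k=\sum_j(-1)^j\binom{k-1}{j-1}\gamma^{k+j}$, not the statement. Your proposed calibration cannot decide this: $\beta^1=-\gamma^2$ holds for both values of $\epsilon$. You need $k\ge 2$, or better, a direct orientation check.

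\textbf{The fix.} The check is short. Work in the local model $G=\{z=0\}$ with $n_G=\hat z$, and $G_2=\{y=0\}$ with $n_{G_2}=\hat y$. The curve $c=G\cap G_2$ then points along $\hat z\times\hat y=-\hat x$. With the outward-normal-first convention, $G_2$ cut along $c$ has boundary $L_2+c^+-c^-$. So $[L_2]=(i^-_*-i^+_*)[c]$, that is, $\epsilon=-1$.

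In your Step~1 language, this says two things. For a curve $a\subset G$ one has $\lk(a,L_2)=a\cdot_G c$. Also $\lk(a,b^+)-\lk(a,b^-)=-\,a\cdot_G b$, since when $\dot a=\hat x$, $\dot b=\hat y$, $n_G=\hat z$, the crossing of $b^+$ over $a$ is negative. So the intersection matrix of $G$ is $V^T-V$, and $[c]=(V^T-V)^{-1}v$.

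Now set $A:=V^T-V$ and $B:=VA^{-1}$. You get $\gamma^m(L^\ast,G)=v^TA^{-1}B^{m-1}v$ exactly, $V^TA^{-1}=B+I$, and your Step~3 yields the stated formula with no remaining ambiguity. Do not import the normalization $(i^+_*-i^-_*)[L_{12}^0]=[L_2]$ with $A=V-V^T$ from the paper's Seifert-matrix proposition. Fed into your Step~3 it produces $(B-I)^{k-1}$, which is incompatible with the statement for $k\ge 2$.

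Step~2 should also say explicitly two things:
\begin{itemize}
\item $\beta^k$ may be computed with $G$ fixed at every stage, because Cochran's derived links are independent of the surfaces up to weak cobordism.
\item The framings of $G\cap F$ induced by $G$ and by $F$ agree, because the surfaces are transverse.
\end{itemize}
With these remarks and the sign settled, your argument is a complete alternative proof. It makes explicit the sign that the paper's swap-lemma route leaves inside Claim~\pref{item: DSDS vs SDSD}.
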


% We also consider mixed iterated derivatives obtained by alternating the second and third components. This l
% These mixed derivatives carry no new information: they are determined by the original $\gamma$-invariant,
% and in particular they recover Cochran's $\beta$-invariants.

We also develop a broader classification framework for weak cobordism, where it is important to specify the category under consideration. More precisely, the definition depends on whether the embeddings involved are smooth, locally flat, or continuous; see Section~\ref{sect: classify}. Since the full statement involves additional data from the infinite cyclic cover, we highlight here the particularly clean case in which the distinguished component has trivial Alexander polynomial. Let $\sim$ denote the equivalence relation on $\Z^\infty$ generated by
\(
a\sim (T+\operatorname{Id})(a).
\)

\begin{corollary}\label{cor:alexander-one}
Let $L=(L_1,L_2,L_3)$ and $L'=(L_1',L_2',L_3')$ be links with a distinguished component. Suppose that $L_1$ and $L_1'$ have trivial Alexander polynomial. Then the following are equivalent:
\begin{enumerate}
\item \label{item: same invts}
$\beta(L_1,L_2)=\beta(L_1',L_2')$, $\beta(L_1,L_3)=\beta(L_1',L_3')$, and
$\gamma(L)=\gamma(L')\in\Z^\infty /\sim $.

\item \label{item: top weak cob}
$L$ and $L'$ are continuously weakly cobordant.

\item \label{item: LF weak cob}
$L$ and $L'$ are locally flat weakly cobordant.
\end{enumerate}
If we additionally assume that $L_1$ and $L_1'$ cobound a smoothly embedded annulus in $S^3\times[0,1]$, then these are further equivalent to:
\begin{enumerate}\setcounter{enumi}{3}
\item \label{item: smooth weak cob}
$L$ and $L'$ are smoothly weakly cobordant.
\end{enumerate}
\end{corollary}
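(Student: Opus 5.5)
The proof splits into the easy implications and one hard one. The chain (\ref{item: smooth weak cob}) $\Rightarrow$ (\ref{item: LF weak cob}) $\Rightarrow$ (\ref{item: top weak cob}) is immediate from the definitions, since each category is weaker than the previous one. For (\ref{item: top weak cob}) $\Rightarrow$ (\ref{item: same invts}) we invoke the invariance results. By Theorem~\ref{thm: gamma as h}, together with the weak cobordism version of it (Remark~\ref{rmk:iequiv} and Section~\ref{sec:weakcobordism}), $\noteta(L)$, and hence $\gamma(L)\in\Z^\infty/\sim$, is an invariant of continuous weak cobordism. Cochran's $\beta$-invariants of the sublinks $(L_1,L_2)$ and $(L_1,L_3)$ are invariants of the analogous weak cobordism of $2$-component links. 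This follows because they are recovered from the Kojima--Yamasaki $\eta$-function, which only depends on the infinite cyclic cover of the annulus complement and on the homology classes of the surfaces. Alternatively, it follows from Theorem~\ref{thm:beta-from-gamma} applied to $L^\ast$.

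The main content is (\ref{item: same invts}) $\Rightarrow$ (\ref{item: LF weak cob}), and its smooth upgrade. I would deduce it from the general classification in Section~\ref{sect: classify}. That classification presumably says that two links are weakly cobordant if and only if the distinguished components are concordant and the data $(\eta(L_1,L_2),\eta(L_1,L_3),\noteta(L))$ agree, up to the appropriate $\doteq$ and automorphism indeterminacy, in the Blanchfield-type setting of the infinite cyclic cover. When $\Delta_{L_1}=1$, Freedman's theorem makes $L_1$ and $L_1'$ topologically (locally flat) concordant to the unknot, hence to each other. The Alexander module is also trivial, so $\widetilde{E(L_1)}$ has the rational homology of the unknot's cover. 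In that case $\eta$ and $\noteta$ are Laurent polynomials, rather than rational functions with nontrivial denominators, and are determined by their Taylor expansions at $t=1$. These expansions are encoded by $\beta$ and, via Theorem~\ref{thm: gamma as h}, by $\gamma$. So the extra cyclic-cover data in the general theorem collapses to exactly the invariants in (\ref{item: same invts}).

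The step I expect to be hardest is the realization argument itself, if it must be done by hand. Given equal invariants, one has to build the weak cobordism. The plan is to glue Freedman's locally flat concordance annulus $A$ between $L_1$ and $L_1'$, and then find disjoint surfaces in $S^3\times[0,1]\smallsetminus A$ bounded by $L_2\sqcup -L_2'$ and $L_3\sqcup -L_3'$. These surfaces must satisfy the homological condition, namely that they lift to the infinite cyclic cover. Lifting is possible because the classes of the components vanish in $H_1$ of the annulus complement, which is $\Z$ generated by the meridian, owing to the vanishing linking numbers. Disjointness of the two surfaces is obstructed by their intersection in the infinite cyclic cover. This intersection is measured by $\noteta(L)-\noteta(L')$, and its vanishing up to $\doteq$ is equivalent to $\gamma(L)\sim\gamma(L')$. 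The same reasoning for self-intersections uses $\eta$, hence $\beta$. Intersections are then removed by tubing along lifts of the meridian, using that $\pi_1$ of the complement is $\Z$, so the equivariant intersection form is over $\Z[t^{\pm1}]$ and the unimodularity coming from $\Delta=1$ applies. For the smooth statement, one assumes a smooth annulus to begin with. The rest of the construction consists of surgery-free tubing and band moves, so it can be carried out smoothly, which gives (\ref{item: smooth weak cob}).
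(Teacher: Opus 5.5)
There is a genuine gap in (1)$\Rightarrow$(3). The easy implications are fine, and so is your derivation of $\beta$-invariance from the Kojima--Yamasaki $\eta$. Freedman's annulus is also the right starting point. But you never use the one fact that makes the corollary true.

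\textbf{The guessed classification is wrong.} You take the classification to be: concordant distinguished components plus agreement of $\eta(L_1,L_2)$, $\eta(L_1,L_3)$ and $h(L)$. That is not Theorem~\ref{thm:classify}, and as stated it is false. Theorem~\ref{thm:notweaklytotheunlink} gives a link whose distinguished component is concordant to the unknot and whose $\beta$-invariants (hence $\eta$) all vanish, yet it is not weakly cobordant to the unlink. The missing datum is very weak $n$-cobordism (Definition~\ref{defn: very weak cobordism}). This requires an annulus $A$ with $[\widetilde{L_i}]=t^{n_i}[\widetilde{L_i'}]$ in $H_1(\widetilde{E(A)})$ and $n_2-n_3=n$, where $n$ is the exponent with $\gamma(L,G)=(T+\Id)^n\gamma(L',G')$.

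\textbf{The by-hand construction breaks at the same point, in two places.}
\begin{itemize}
\item You argue that the surfaces lift because $[L_i]=0$ in $H_1(E(A))\cong\Z$. That only lets $L_i$ lift and gives some surface in $E(A)$. It does not give one with $H_1(Y_i)\to H_1(E(A))$ zero. That condition says $\widetilde{L_i}-t^{n_i}\widetilde{L_i'}$ is null-homologous in $\widetilde{E(A)}$, which is an Alexander-module condition that fails in general.
\item The equivariant intersection of the lifts is not ``$h(L)-h(L')$ up to $\doteq$''. It is exactly $h(L_1,\widetilde{L_2},\widetilde{L_3})-t^{n_2-n_3}h(L_1',\widetilde{L_2'},\widetilde{L_3'})$, with $n_2,n_3$ dictated by $\partial\widetilde{Y_i}$. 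The hypothesis $\gamma(L)\sim\gamma(L')$ kills this only if you can arrange $n_2-n_3=n$.
\end{itemize}

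\textbf{The missing idea.} Both points are settled by the vanishing of the Alexander module, which is all the paper's proof uses. Since $\Delta_{L_1}=1$, we have $[\widetilde{L_i}]=0$ in $H_1(\widetilde{S^3\smallsetminus L_1})$, and likewise $[\widetilde{L_i'}]=0$. Hence $[\widetilde{L_i}]=t^{n_i}[\widetilde{L_i'}]$ in $H_1(\widetilde{E(A)})$ for every choice of $n_i$. So $(L,G)$ and $(L',G')$ are very weakly $n$-cobordant for every $n$, and Theorem~\ref{thm:classify} finishes the proof. For (4), run the same argument with the given smooth annulus.

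\textbf{Other points.}
\begin{itemize}
\item The consequence you draw from $\Delta=1$ is irrelevant: that $\eta$ and $h$ are Laurent polynomials determined by their Taylor expansions at $t=1$. Any rational function regular at $t=1$ is determined by its expansion there.
\item The final tubing step needs only $(Y_i,t^kY_j)=0$ for all $i,j,k$. You pair oppositely signed intersection points by arcs, note that only finitely many $k$ occur by compactness, and project down. Neither unimodularity nor $\pi_1(E(A))\cong\Z$ plays any role, and the tubes follow such arcs, not lifts of meridians.
\end{itemize}
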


Finally, we show that the perspective of studying linking numbers of lifts in the infinite cyclic cover has consequences beyond the $3$-component setting. In particular, it yields a characterization of those knots that bound an embedded disk in $B^4$ whose complement has fundamental group $\mathbb{Z}$.

\begin{theorem}\label{thm:pi1-Z-disk}
Let $K\subseteq S^3$ be a knot. The following are equivalent:
\begin{enumerate}
\item \label{item:alexander-one}
$K$ has trivial Alexander polynomial.

\item \label{item:locally-flat-Z-disk}
$K$ bounds a locally flat embedded disk $D\subset B^4$ with
$\pi_1(B^4\smallsetminus D)\cong \Z$.

\item \label{item:continuous-Z-disk}
$K$ bounds a continuously embedded disk $D\subset B^4$ with
$\pi_1(B^4\smallsetminus D)\cong \Z$.
\end{enumerate}
\end{theorem}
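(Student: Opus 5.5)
The argument runs around the cycle (\ref{item:alexander-one}) $\Rightarrow$ (\ref{item:locally-flat-Z-disk}) $\Rightarrow$ (\ref{item:continuous-Z-disk}) $\Rightarrow$ (\ref{item:alexander-one}). The middle implication is trivial, since a locally flat disk is in particular continuously embedded.

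For (\ref{item:alexander-one}) $\Rightarrow$ (\ref{item:locally-flat-Z-disk}) we invoke Freedman's theorem. A knot with $\Delta_K(t)\doteq 1$ bounds a locally flat disk $D\subset B^4$ with $\pi_1(B^4\smallsetminus D)\cong\Z$. Concretely, the $0$-surgery $M_K$ has $H_1(M_K;\Z[\Z])=0$, so $M_K$ bounds a topological $4$-manifold homotopy equivalent to $S^1$ by surgery in the good group $\Z$. Attaching a $2$-handle along the meridian yields a contractible manifold with boundary $S^3$, hence $B^4$, and the cocore gives $D$. We will simply cite this (Freedman; Freedman--Quinn).

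The substantive direction is (\ref{item:continuous-Z-disk}) $\Rightarrow$ (\ref{item:alexander-one}), and here we follow the philosophy of the paper: linking numbers of lifts in the infinite cyclic cover. Suppose $D$ is continuously embedded with $\pi_1(B^4\smallsetminus D)\cong\Z$, and set $W=B^4\smallsetminus D$. Since $D$ may be wild, we cannot take a tubular neighborhood. Instead we work with $W$ itself together with a collar of the boundary near $S^3\smallsetminus K$; the boundary is locally flat after pushing $D$ slightly into the interior, keeping $D$ a cone near $\partial$. Since $\pi_1(W)=\Z$, the infinite cyclic cover $\widetilde W$ is simply connected. Alexander duality on $B^4$ relative to the compact set $D$ (Čech cohomology, which handles continuous embeddings) gives $H_*(W)\cong H_*(S^1)$. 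It should also give the vanishing of $H_2(\widetilde W;\Q)$ as a $\Q[t^{\pm1}]$-torsion module, via Milnor's exact sequence for infinite cyclic covers.

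Now take a class in the Alexander module $H_1(\widetilde{E(K)};\Q)$, represented by a curve $\alpha$ in the cover of $S^3\smallsetminus K$, which includes into $\widetilde W$. The Blanchfield pairing $\Bl(\alpha,\beta)\in\Q(t)/\Q[t^{\pm1}]$ is computed as an equivariant linking number of lifts, exactly as in the definition of $\noteta$. If $\alpha$ bounds in $\widetilde W$ (automatic, since $\widetilde W$ is simply connected and hence $H_1(\widetilde W)=0$), then the linking number with $\beta$ is computed by intersecting the $2$-chain bounded by $\alpha$ with a $2$-chain bounded by $\beta$. Pushing one chain into the interior, this is an equivariant intersection in $\widetilde W$, which is a Laurent polynomial. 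Hence $\Bl(\alpha,\beta)=0$ for all $\alpha,\beta$. Nondegeneracy of the Blanchfield form then forces the rational Alexander module to vanish. Integrally, the same argument with $\Z[t^{\pm1}]$ coefficients (Levine's integral nonsingularity, modulo $\Z$-torsion issues) shows $\Delta_K\doteq1$. As a check, $\Delta_K(1)=\pm1$ forces $\Delta_K$ to be a unit once all roots are excluded, and the rational vanishing already gives $\Delta_K\doteq\pm t^n$.

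The main obstacle is making the intersection argument legitimate when $D$ is only continuous. Chains in $\widetilde W$ may accumulate near $D$, so the intersection pairing needs a manifold structure. The fix is that $W$ is an open topological $4$-manifold, so its cover $\widetilde W$ is an open $4$-manifold and admits intersection theory of compact chains; everything we need lives in a compact subset of $\widetilde W$ away from $\widetilde D$, together with the boundary collar. We expect that verifying the collar near $\partial B^4$, so that $S^3\smallsetminus K$ sits as a boundary piece of $W$, is the remaining delicate point. We would first try to handle it by coning: replace $D$ near the boundary with the cone on $K$ in a small collar, which is still continuous and preserves $\pi_1$ by van Kampen.
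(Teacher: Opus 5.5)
Your proposal is essentially the paper's argument. For (1)$\Rightarrow$(2) you cite Freedman. For (3)$\Rightarrow$(1), every class of $\mathcal{A}(K)$ bounds in the simply connected cover $\widetilde W$, so the Blanchfield form is computed by equivariant intersections in $\widetilde W$ and hence vanishes. Your rational version, combined with $\Delta_K(1)=\pm1$, is a fine substitute for integral nonsingularity. The collar worry is moot: $W$ is an open subset of $B^4$, hence already a smooth manifold with boundary $S^3\smallsetminus K$.

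The step you assert rather than derive is $\Bl(\alpha,\beta)=\bigl[\sum_k (X_\alpha,t^kX_\beta)t^k\bigr]$. This is exactly where the vanishing of the equivariant intersection form of $\widetilde W$ must be applied, to the closed $2$-cycle $Y-p(t)X_\alpha$ paired with $X_\beta$; the paper gets this vanishing from Kojima--Yamasaki's Lemma~5. Your hedged claim about $H_2(\widetilde W;\Q)$ should be stated and invoked at that point. Note that Milnor's sequence only shows $t-1$ acts invertibly on $H_2(\widetilde W;\Q)$, which gives torsion only with finite generation. That is still enough, because it makes every value of the pairing divisible by all powers of $t-1$, hence zero.
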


The equivalence of \eqref{item:alexander-one} and \eqref{item:locally-flat-Z-disk}
is the well-known theorem of Freedman~\cite{Freedman1, Freedman-Quinn:1990-1, Garoufalidis-Teichner:2004-1},
and the implication \eqref{item:locally-flat-Z-disk} $\Rightarrow$
\eqref{item:continuous-Z-disk} is immediate. Thus the new content is the implication
\eqref{item:continuous-Z-disk} $\Rightarrow$ \eqref{item:alexander-one}: a continuously
embedded disk $D\subset B^4$ with
$\pi_1(B^4\smallsetminus D)\cong \mathbb{Z}$
already implies that $K$ has trivial Alexander polynomial.

\begin{remark}\label{rmk:homology-sphere}
All of our techniques are built on computations in the homology groups of knot
complements, complements of disks in the $4$-ball, complements of embedded
annuli in $S^3\times[0,1]$, and their infinite cyclic covers. Thus, all of the
results of this paper apply equally well if every instance of $S^3$, $B^4$, and
$S^3\times[0,1]$ is replaced, respectively, by an integral homology sphere, an
integral homology ball, and an integral homology cobordism.
\end{remark}

\subsection*{Notation and conventions}
Throughout the article, all links are in $S^3$, and they are ordered and oriented. Homology is taken with $\mathbb{Z}$ coefficients unless otherwise specified. Given a knot $K$, we use $-K$ to denote the reverse of the mirror image of $K$.  {If $L=(L_1,L_2,L_3)$ is a link with distinguished component $L_1$, then any Seifert surface for $L_1$ will be required to be disjoint from $L_2$ and $L_3$.  Similarly any Seifert surfaces for $L_2$ and $L_3$ will be disjoint from $L_1$.  When we specify Seifert surfaces for $L_1$ and $L_2$, then unless we specify otherwise,  we will assume that they intersect in a knot. }
%\footnote{\chris{Use this to simplify theorem statements.} \JP{For the statment in the body (not in the intro) I think it is okay to be detailed because often readers forget about conventions. But I am okay with leaving this red portion in the intro and not changing the statements in the body.}}

% \subsection*{Organization}
% Section~2 introduces weak cobordism for $3$-component links, defines the relative invariants
% $\gamma^k(L,G)$, develops examples, and proves the Seifert-matrix formula above. Section~3 studies
% swapping and mixed derivatives, showing that they yield no new information and recovering Cochran's
% $\beta$-invariants from the $\gamma$-invariants. Section~4 defines the invariant $\noteta(L)$ in the infinite
% cyclic cover and proves that its Taylor coefficients recover $\gamma(L)$. Section~5 proves that
% $\gamma^k(L)$ lifts $\bar{\mu}_L(1^k23)$ using Massey products. Section~6 develops a classification framework
% for weak cobordism and obtains a clean characterization when the distinguished component has trivial
% Alexander polynomial. Section~7 applies the same viewpoint to embedded disks with $\pi_1=\Z$.

\subsection*{Organization}
We begin in Section~2 by introducing weak cobordism for $3$-component links, defining the invariants $\gamma^k(L,G)$, which depend on the choice of a Seifert surface $G$ for the distinguished component, and presenting examples. Section~3 studies 
the effect of swapping components $L_2$ and $L_3$, finds that the resulting $\gamma$-invariants carry the same information, and uses this philosophy to determine Cochran's $\beta$-invariants from the $\gamma$-sequence,
%swapping and mixed derivatives; we show that they yield no new information and use this to recover Cochran's $\beta$-invariants from the $\gamma$-sequence, 
proving Theorem~\ref{thm:beta-from-gamma}. In Section~4, we define the invariant $h(L)$ and show that its Taylor expansion recovers $\gamma(L)$, thereby proving Theorem~\ref{thm: gamma as h}. Section~5 then shows, via Massey products, that $\gamma^k(L)$ lifts the Milnor invariant $\bar{\mu}_L(1^k23)$, completing the proof of Theorem~\ref{thm:main}. Section~6 develops a classification framework for weak cobordism,  with Corollary~\ref{cor:alexander-one} as a consequence. Finally, Section~7 applies the viewpoint of our paper to knots bounding embedded disks in $B^4$ whose complements have fundamental group isomorphic to $\mathbb{Z}$, establishing Theorem~\ref{thm:pi1-Z-disk}.  In Section~\ref{sect: problems}, we close with some natural questions for further study.  

\subsection*{Acknowledgements}
The second author is partially supported by the Samsung Science and Technology Foundation
(SSTF-BA2102-02) and by the NRF grant RS-2025-00542968.

\section{Weak cobordism, the $\gamma$-invariant and some examples}\label{sec:weakcobordism}

We begin this section by discussing ideas fundamental to~\cite{C3}. There, Cochran introduced an operation on a $2$-component link, which he called a \emph{derivative}. This operation replaces a link $(L_1,L_2)$ with $\lk(L_1,L_2)=0$ by a new link $(L_1,L_{12})$, obtained by choosing Seifert surfaces for $L_1$ and $L_2$ that intersect in a simple closed curve and then taking this curve as $L_{12}$. This operation is not well defined, since it depends on the choice of Seifert surfaces. However, it becomes well defined modulo a weaker equivalence relation than concordance, called \emph{weak cobordism}.

Consequently, the derivative operation may be iterated to produce a sequence of weak cobordism classes of links
\[
(L_1,L_2),\ (L_1,L_{12}),\ (L_1,L_{112}),\ \dots,
\]
each of which is an invariant of the original link.\footnote{We note that, in~\cite{C3}, it is not necessary to use the same Seifert surface for the distinguished component at each stage. However, as mentioned in the introduction, 
%the definition of the $\gamma$-invariant 
our work
requires us to fix a Seifert surface.}
The derived component $L_{1^k2}$ inherits a framing coming from the two Seifert surfaces on which it lies. Cochran then defines a sequence of numerical invariants $\beta^k(L)$ by recording the linking number between $L_{1^k2}$ and a push-off determined by this framing. Our work is based on adapting these notions to the study of $3$-component links.

\begin{definition}\label{defn:weak cob}
Let $L=(L_1, L_2, L_3)$ and $L'=(L_1', L_2', L_3')$ be $3$-component links satisfying $\lk(L_1,L_i)=\lk(L_1',L_i')=0$ for $i=2,3$. We say that $L$ is \emph{weakly cobordant} to $L'$ and write $L\simeq_w L'$ if the following hold:
\begin{enumerate}
\item \label{item: weak cob - concordance} $L_1\times\{0\}$ and $L_1'\times\{1\}$ cobound in $S^3\times[0,1]$ a smoothly embedded annulus $A$.
\item \label{item: weak cob - cobordism} For $i=2,3$, $L_i\times\{0\}$ and $L_i'\times \{1\}$ cobound in $S^3\times[0,1]$ a compact, oriented, smoothly embedded surface $Y_i$.
\item \label{item: weak cob - disjoint} The surfaces $A$, $Y_2$, and $Y_3$ are pairwise disjoint.
\item \label{item: weak cob - lifting} For $i=2,3$, the inclusion-induced map
\[
H_1(Y_i)\to H_1(S^3\times[0,1]\smallsetminus A)\cong \Z
\]
is the zero homomorphism.
\end{enumerate}
\end{definition}

Recall that we refer to the first component $L_1$ as the \emph{distinguished component}.

\begin{proposition}\label{prop: equiv to weak cobordism}
Let $L$ and $L'$ be $3$-component links with a distinguished component. Let $A$, $Y_2$, and $Y_3$ be surfaces satisfying conditions \pref{item: weak cob - concordance}, \pref{item: weak cob - cobordism}, and \pref{item: weak cob - disjoint} of Definition~\ref{defn:weak cob}. Then condition \pref{item: weak cob - lifting} is equivalent to each of the following:
\begin{enumerate}[label=(\Alph*)]
\item \label{item weak cob lifting A}
There exist Seifert surfaces $G$ and $G'$ for $L_1$ and $L_1'$, respectively, such that $G$ is disjoint from $L_2\cup L_3$ and $G'$ is disjoint from $L_2'\cup L_3'$, and
\[
[G\cup A\cup -G']=0 \in H_2\bigl(S^3\times[0,1]\smallsetminus (Y_2\cup Y_3)\bigr).
\]
\item \label{item weak cob lifting B}
There exist Seifert surfaces $G$ and $G'$ for $L_1$ and $L_1'$, respectively, such that $G\cup A\cup -G'$ bounds a compact, oriented, embedded $3$-manifold $N$ in $S^3\times[0,1]$ that is disjoint from $Y_2$ and $Y_3$.
\end{enumerate}
\end{proposition}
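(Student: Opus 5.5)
We will prove the cycle of implications (4) $\Rightarrow$ (B) $\Rightarrow$ (A) $\Rightarrow$ (4), where (4) denotes condition \pref{item: weak cob - lifting}. Set $W=S^3\times[0,1]$ and $Y=Y_2\cup Y_3$. The implication (B) $\Rightarrow$ (A) is immediate. If $G\cup A\cup -G'=\partial N$ with $N\subset W\smallsetminus Y$, then the class is zero in $H_2(W\smallsetminus Y)$.

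For (A) $\Rightarrow$ (4), use intersection numbers. The group $H_1(W\smallsetminus A)\cong\Z$ is generated by a meridian of $A$. A class $[c]$ is detected by its algebraic intersection number with any compact oriented $3$-chain in $W$ whose boundary is $A$ together with pieces in $\partial W$ away from $c$. Given $G,G'$ as in (A), the closed surface $\Sigma=G\cup A\cup -G'$ (after pushing $G,G'$ slightly into the interior near the ends) is null-homologous in $W\smallsetminus Y$. For a curve $c\subset Y_i$, the meridian count of $c$ around $A$ equals the intersection number of $c$ with a $3$-chain bounded by $\Sigma$. Since $W$ has $H_3(W)=0$, such a chain exists and is unique up to boundaries, so this count is well defined. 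By Lefschetz/Alexander duality in $W$, it equals the linking pairing $\lk_W(\Sigma,c)$. The hypothesis $[\Sigma]=0\in H_2(W\smallsetminus Y)$ means we may choose the $3$-chain $Z$ with $\partial Z=\Sigma$ supported in $W\smallsetminus Y$. Then $Z\cdot c=0$ for every $c\subset Y_i$, so $H_1(Y_i)\to H_1(W\smallsetminus A)$ is zero.

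For (4) $\Rightarrow$ (B), which is the main step, we construct $N$ geometrically. Let $E=W\smallsetminus \nu A$. Since $A$ is an annulus in $W$, the group $H_1(E)\cong\Z$ is generated by the meridian, and there is a map $f\colon E\to S^1$ realizing this class. We choose $f$ so that on $\partial \nu A\cong A\times S^1$ it is the projection to $S^1$. We want to arrange also that on $S^3\times\{0\}$ and $S^3\times\{1\}$ the map $f$ restricts to maps whose regular fibres are Seifert surfaces $G,G'$ for $L_1,L_1'$. To do this, first pick Seifert surfaces $G$, $G'$ disjoint from $L_2\cup L_3$ and $L_2'\cup L_3'$, which exist since the linking numbers vanish. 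These give maps on the two ends of $E$, and they agree with the boundary projection up to homotopy. Since $E$ is built relative to its boundary from handles and the target is a $K(\Z,1)$, the map extends over $E$ precisely because the cohomology class exists. Condition (4) says $f|_{Y_i}$ is null-homotopic, since $[f]$ restricted to $H^1(Y_i)$ vanishes. We then homotope $f$ rel $\partial E$ minus neighbourhoods of the $\partial Y_i$, so that $f$ is constant on $Y_2\cup Y_3$ at a point $\theta_0\in S^1$. A collar homotopy extension handles this.

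Near the ends, $f|_{L_i}$ may not be constant. We fix this by precomposing with a small modification near $\partial W$, using that $L_i$ misses $G$: we take $\theta_0$ to be the value where $G$ is the fibre, and arrange $f|_{L_i}$ to avoid it. Alternatively, we keep $f$ constant at some $\theta_1\ne\theta_0$ on $Y$ while $G,G'$ are the fibres over $\theta_0$. Making $f$ smooth and transverse to $\theta_0$ rel boundary, $N=f^{-1}(\theta_0)$, glued to an annular piece of $\partial\nu A$, is a compact oriented $3$-manifold with $\partial N=G\cup A\cup -G'$ and $N\cap Y=\emptyset$.

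The expected obstacle is this last bookkeeping. We must make $f$ constant on $Y$ away from the regular value while keeping the prescribed behaviour on the ends, where $Y$ meets $\partial W$ in the links $L_i$ and $L_i'$. I would handle it by choosing the ends first, with $G,G'$ as fibres over $\theta_0$ and $L_i$ mapped near $\theta_1$, and then extending. The obstruction to a rel-boundary null-homotopy on $Y_i$ is exactly a relative class in $H^1(Y_i,\partial Y_i)$ paired against arcs. These arcs can be closed up using the end maps, and condition (4) kills the result.
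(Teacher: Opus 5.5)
\textbf{Assessment: there is a genuine gap in the direction (4) $\Rightarrow$ (B).}

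Your implications (B) $\Rightarrow$ (A) $\Rightarrow$ (4) are essentially fine. Intersecting $c\subset Y_i$ with a $3$-chain bounded by $G\cup A\cup -G'$ computes $[c]\in H_1(W\smallsetminus A)\cong\Z$, by Lefschetz duality on $W\smallsetminus\nu A$. One slip: $H_3(S^3\times[0,1])\cong\Z$, not $0$. The count is still independent of the chain, because a closed curve in $W$ is null-homologous.

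The gap is in (4) $\Rightarrow$ (B), exactly at the bookkeeping you flagged. You fix arbitrary Seifert surfaces $G,G'$ at the outset and claim that condition (4) kills the relative obstruction. It does not. Condition (4) only makes $f|_{Y_i}$ null-homotopic as an absolute map. What remains lies in the image of $H^0(\partial Y_i)\to H^1(Y_i,\partial Y_i)$. This is the integer $k_i$ given by the degree of $f$ along an arc in $Y_i$ from $L_i$ to $L_i'$; equivalently, $[Y_i\cap N]=k_i[L_i]$ in $H_1(Y_i)$. Closing such arcs up inside $Y_i$ only shows that they all have the same degree. They cannot be closed up through the ends, which lie in different components of $\partial W$. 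Nor is there any interior freedom: $H^1(E,\partial E)\cong H_3(E)=0$, so once the end maps are fixed, $f$ and hence $k_i$ are determined up to homotopy rel $\partial E$.

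In fact the claim is false for a fixed pair of Seifert surfaces. Take $A=L_1\times[0,1]$, $Y_i=L_i\times[0,1]$, and let $G'=G^*$ be the result of tubing $G$ to $T_2=\partial\nu(L_2)$. Condition (4) holds. However, $[G\cup A\cup -G^*]$ corresponds to $\pm[T_2]\neq 0$ in $H_2(W\smallsetminus Y)\cong H_2(S^3\smallsetminus(L_2\cup L_3))\cong\Z$. So (A), and hence (B), fails for this pair.

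The missing idea is to use the existential quantifier in (B) and change $G$. Inside $\nu(L_i)\subset S^3\times\{0\}$, modify the end map by a radial winding that is constant on $L_i$ and on $\partial\nu(L_i)$, with degree chosen to cancel $k_i$. Extend this over a collar by a null-homotopy rel $\partial\nu(L_i)$ supported in $\nu(L_i)\times[0,\epsilon]$; this exists since $H^1(\nu(L_i),\partial\nu(L_i))=0$. The effect is:
\begin{enumerate}
\item the fibre $G$ is replaced by $G$ together with $|k_i|$ parallel copies of $T_i$, which you then tube to $G$ along arcs missing $L_2\cup L_3$;
\item the arc degree on $Y_i$ shifts by $-k_i$;
\item $Y_j$ for $j\neq i$ is untouched.
\end{enumerate}
This is exactly the paper's step of replacing $G$ by $G^*$ until $Y_i\cap N$ is null-homologous in $Y_i$. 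With this added for $i=2,3$, the rest of your construction goes through: homotope $f$ to a constant $\theta_1\neq\theta_0$ on $Y$, make it transverse, and take $N=f^{-1}(\theta_0)$ together with a collar in $\nu A$.

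The repaired argument is a clean alternative to the paper's intersect-and-surger proof. In your version, condition (4) handles the absolute obstruction (the paper's ``$y_i$ separates $Y_i$''), and the choice of Seifert surface handles the relative one.
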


\begin{proof}

The equivalence of \ref{item weak cob lifting A} and \ref{item weak cob lifting B} is clear. It is also clear that \ref{item weak cob lifting B} implies \pref{item: weak cob - lifting}, since the $3$-manifold $N$ is necessarily Poincar\'e dual to the meridian of $L_1$, which generates $H_1(S^3\times[0,1]\smallsetminus A)$.

To obtain the converse, assume that all of the conditions of Definition~\ref{defn:weak cob} hold, and fix Seifert surfaces $G$ and $G'$ for $L_1$ and $L_1'$, respectively. Since $H_2(S^3\times[0,1])=0$, there exists a compact, oriented $3$-manifold $N$ with
\[
\partial N = G\cup A\cup -G'.
\]
If necessary, we may attach tubes to $N$ along arcs in $S^3\times[0,1]$ to arrange that $N$ is connected.

Since $N$ and each $Y_i$ intersect transversely, the intersection
\[
y_i = Y_i\cap N
\]
is a union of simple closed curves. If $y_i$ does not separate $Y_i$, then there exists some $\alpha\in H_1(Y_i)$ with nonzero algebraic intersection number with $y_i$. It follows that $\alpha$ has nonzero intersection with $N$ in $S^3\times[0,1]$, contradicting the assumption that the inclusion-induced map
\[
H_1(Y_i)\to H_1(S^3\times[0,1]\smallsetminus A)
\]
is the zero homomorphism. Thus $y_i$ separates $Y_i$, and hence $[y_i]=k[L_i]$ in $H_1(Y_i)$ for some $k\in \Z$. Our next step is to modify the Seifert surface $G$ and the $3$-manifold $N$ so as to arrange that $k=0$.

Let $\nu(L_i)\subseteq S^3\times \{0\}$ be a closed tubular neighborhood of $L_i$, and let $T_i=\bdry \nu(L_i)$ be its boundary. Choose an arc $a$ from $G$ to $T_i$ in $S^3\smallsetminus L$, and let $G^*$ be the result of tubing $G$ and $T_i$ together along $a$, as in Figure~\ref{fig:tubing}. Let $N^*$ be the $3$-manifold cobordism from $G^*$ to $G$ obtained by pushing the interior of $G^*$ slightly into $S^3\times[0,\epsilon]$ and then taking the union with $\nu(L_i)\cup \nu(a)\subseteq S^3\times\{\epsilon\}$. Note that
\[
N^*\cap \bigl(L_i\times[0,\epsilon]\bigr) = L_i\times\{\epsilon\}.
\]
Consequently, $(N\cup N^*)\cap Y_i$ is homologous in $Y_i$ to $k+1$ copies of $L_i$ (or to $k-1$ copies if we use the opposite orientation on $T_i$). Iterating this construction, we may arrange that $Y_i\cap N$ represents $0$ in $H_1(Y_i)$.

\begin{figure}[!htbp]
     \centering
     \begin{subfigure}[t]{0.4\textwidth}
     \centering
         \begin{tikzpicture}
         \node at (0,0){\includegraphics[width=.9\textwidth]{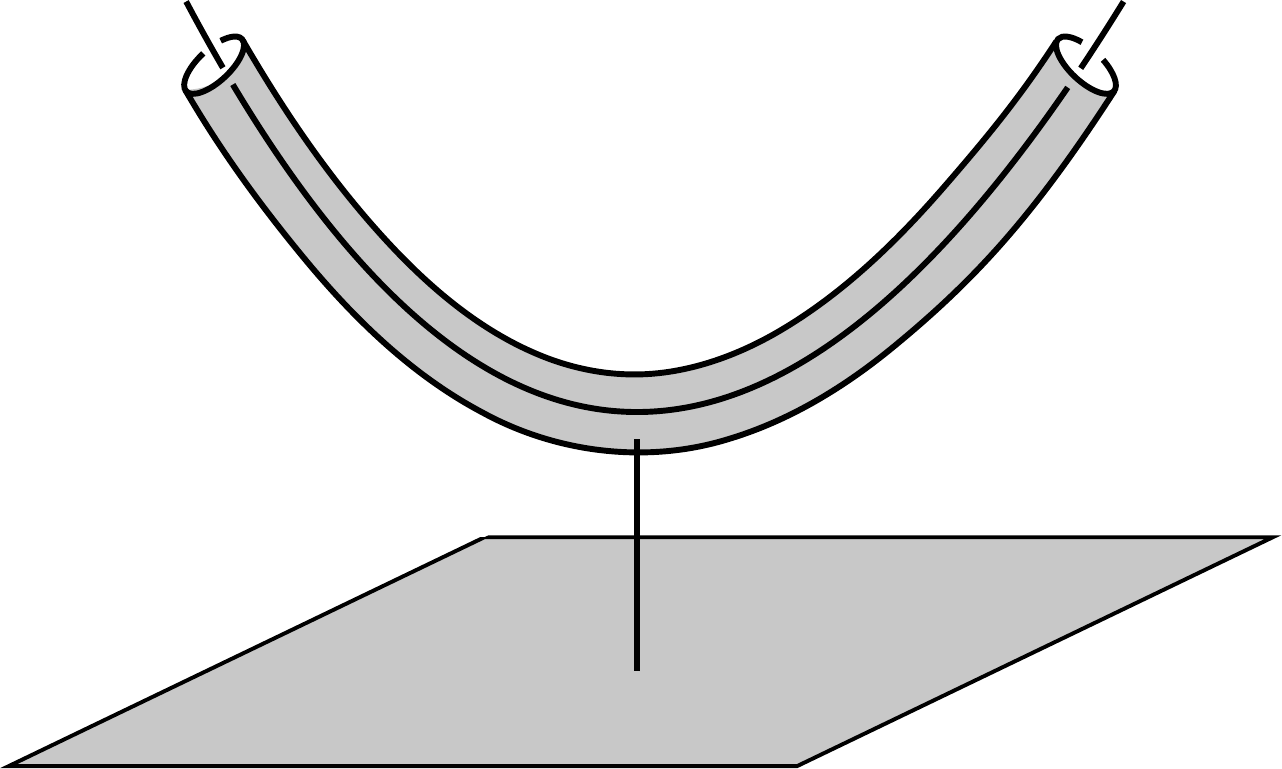}};
         \node[below] at (-1,-1) {$G$};
         \node[right] at (-.1,-.5) {$a$};
         \node[right] at (.7,1) {$T_i$};
         \node[right] at (2,2) {$L_i$};
         \end{tikzpicture}
         \caption{A Seifert surface $G$ for $L_1$, together with an arc $a$ joining $G$ to the torus $T_i=\partial\nu(L_i)$.}\label{fig:pretubing}
         \end{subfigure}
         \hfill
     \begin{subfigure}[t]{0.4\textwidth}
     \centering
         \begin{tikzpicture}
         \node at (0,0){\includegraphics[width=.9\textwidth]{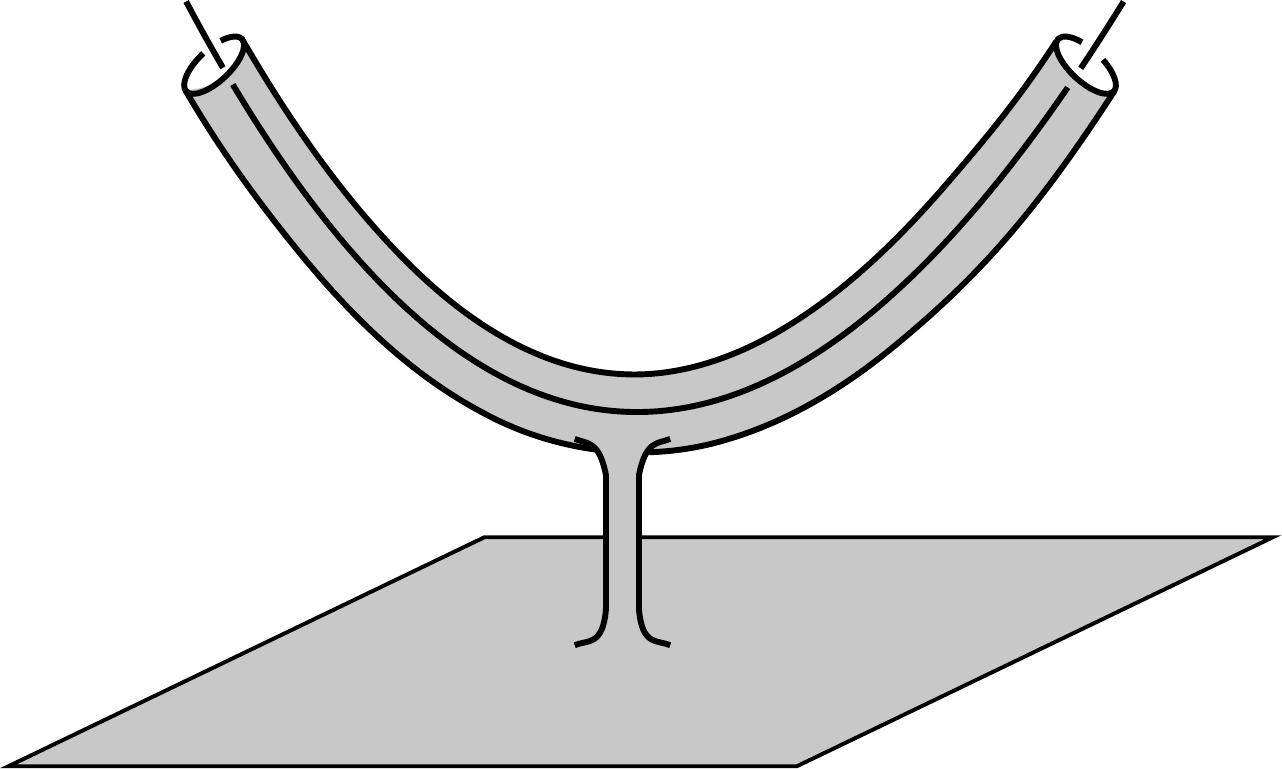}};
         \node[below] at (-1,-1) {$G^*$};
         \end{tikzpicture}
         \caption{The result of tubing $G$ to $T_i$ along $a$.}\label{fig:posttubing}
         \end{subfigure}
     \caption{Tubing a Seifert surface $G$ to the torus $T_i$.}\label{fig:tubing}
     \end{figure}

Thus $Y_i\cap N$ bounds a compact surface $Y_i^0\subseteq Y_i$. We may now modify $N$ by an ambient surgery as follows. Remove $\nu(Y_i\cap N)$ from $N$ and replace it with the boundary of a tubular neighborhood of $Y_i^0$. This completes the proof that \pref{item: weak cob - lifting} implies \ref{item weak cob lifting B}.
\end{proof}

% Notice in the proof above that the choice of Seifert surfaces $G$ and $G'$ appears to matter, so we introduce a definition to reflect this.

We introduce the following definition.

\begin{definition}\label{defn: weak cob surf}
Let $L$ and $L'$ be weakly cobordant $3$-component links with a distinguished component. Let $G$ and $G'$ be Seifert surfaces for $L_1$ and $L_1'$. If these surfaces satisfy the conditions of Proposition~\ref{prop: equiv to weak cobordism}, then we say that $(L,G)$ is \emph{weakly cobordant} to $(L',G')$ and write $(L,G)\simeq_w (L',G')$.
\end{definition}

Observe that Proposition~\ref{prop: equiv to weak cobordism} can now be read as saying that $L\simeq_w L'$ if and only if there exist Seifert surfaces $G$ and $G'$ such that $(L,G)\simeq_w (L',G')$. In~\cite[Proposition~3.3]{C3}, Cochran shows that the choice of $G$ does not matter for $2$-component links; the argument uses that $H_2(S^3\smallsetminus L_2)=0$. We cannot argue similarly here, since
$H_2(S^3\smallsetminus (L_2\cup L_3))\cong \Z$.
Thus the choice of Seifert surface carries an additional integer ambiguity. The
following proposition gives a useful criterion for when two choices give weakly cobordant pairs.

\begin{proposition}\label{prop: When are surfaces weak cob}
Let $L=(L_1, L_2, L_3)$ be a $3$-component link with a distinguished component. Let $G$ and $G'$ be Seifert surfaces for $L_1$.
%, both disjoint from $L_2$ and $L_3$.
If $[G\cup -G']=0$ in $H_2\bigl(S^3\smallsetminus (L_2\cup L_3)\bigr)$, then $(L,G)$ is weakly cobordant to $(L,G')$.
\end{proposition}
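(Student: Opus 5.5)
We take the product cobordism: $A = L_1\times[0,1]$, $Y_2 = L_2\times[0,1]$, $Y_3 = L_3\times[0,1]$ in $S^3\times[0,1]$. These are pairwise disjoint, and each $Y_i$ is an annulus. Its $H_1$ is generated by $L_i$, which has linking number zero with $L_1$. So condition \pref{item: weak cob - lifting} of Definition~\ref{defn:weak cob} holds automatically. By Definition~\ref{defn: weak cob surf}, it then remains to verify condition \ref{item weak cob lifting A} of Proposition~\ref{prop: equiv to weak cobordism} for the given surfaces $G$ and $G'$. That is, we must show
\[
[G\times\{0\}\cup L_1\times[0,1]\cup -G'\times\{1\}]=0\in H_2\bigl(S^3\times[0,1]\smallsetminus (L_2\cup L_3)\times[0,1]\bigr).
\]

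The key step is to compare this closed surface with $G\cup -G'$ in $S^3\smallsetminus(L_2\cup L_3)$. The complement $S^3\times[0,1]\smallsetminus (L_2\cup L_3)\times[0,1]$ is $(S^3\smallsetminus(L_2\cup L_3))\times[0,1]$, and projection to the first factor is a homotopy equivalence. Under this projection, the surface $G\times\{0\}\cup L_1\times[0,1]\cup -G'\times\{1\}$ maps to $G\cup -G'$. The annulus $L_1\times[0,1]$ collapses onto $L_1$, which is degenerate and contributes nothing to the class. More precisely, the surface is homologous, via the evident homotopy sliding $G'\times\{1\}$ down to level $0$, to the cycle $G\cup -G'$ pushed into $S^3\times\{0\}$. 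Hence its class is the image of $[G\cup -G']$ under the isomorphism $H_2(S^3\smallsetminus(L_2\cup L_3))\cong H_2\bigl((S^3\smallsetminus(L_2\cup L_3))\times[0,1]\bigr)$. That class vanishes by hypothesis, so condition \ref{item weak cob lifting A} holds and $(L,G)\simeq_w(L,G')$.

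The only point requiring care is the bookkeeping in the last step. If $G$ and $G'$ intersect, $G\cup -G'$ is only a singular $2$-cycle rather than an embedded surface, but this is harmless because condition \ref{item weak cob lifting A} is a homological statement. We should also check orientation conventions, so that the boundary of $L_1\times[0,1]$ cancels correctly with $\partial G$ and $\partial G'$. I expect no genuine obstacle: since \ref{item weak cob lifting A} is equivalent to \ref{item weak cob lifting B}, the required embedded $3$-manifold $N$ comes for free from Proposition~\ref{prop: equiv to weak cobordism}.
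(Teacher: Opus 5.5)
Your proposal is correct and is essentially the paper's proof. Both use the product cobordism $A=L_1\times[0,1]$, $Y_i=L_i\times[0,1]$ and deduce condition (A) of Proposition~\ref{prop: equiv to weak cobordism} from the hypothesis via the identification of the complement with $\bigl(S^3\smallsetminus(L_2\cup L_3)\bigr)\times[0,1]$. You just supply the details the paper leaves implicit, namely the collapse of the annulus and the orientation bookkeeping.
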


\begin{proof}
We construct a weak cobordism. Let $A=L_1\times[0,1]$ and, for $i=2,3$, let $Y_i=L_i\times[0,1]$. Since $[G\cup -G']=0$ in $H_2\bigl(S^3\smallsetminus (L_2\cup L_3)\bigr)$, it follows that
\[
[G\cup A\cup -G']=0 \in H_2\bigl(S^3\times[0,1]\smallsetminus (Y_2\cup Y_3)\bigr).
\]
This is exactly condition~\ref{item weak cob lifting A} of Proposition~\ref{prop: equiv to weak cobordism}, completing the proof.
\end{proof}

 % The group $H_2\bigl(S^3\smallsetminus (L_2\cup L_3)\bigr)\cong \Z$ is generated by the boundary of a tubular neighborhood of $L_2$. Denote this torus by $T_2$. Thus, if $G$ and $G'$ are any Seifert surfaces for $L_1$ disjoint from $L_2$ and $L_3$, then in $H_2\bigl(S^3\smallsetminus (L_2\cup L_3)\bigr)$ the classes $[G]$ and $[G']$ differ by tubing in some number of parallel copies of $T_2$, as in Figure~\ref{fig:tubing}.

 The group $H_2\bigl(S^3\smallsetminus (L_2\cup L_3)\bigr)\cong \Z$ is generated by the boundary of a tubular neighborhood of $L_2$. Denote this torus by $T_2$. Thus, if $G$ and $G'$ are Seifert surfaces for $L_1$ disjoint from $L_2$ and $L_3$, then the closed surface $G\cup -G'$ represents an integer multiple of $[T_2]$. Equivalently, up to this integer ambiguity, changing the Seifert surface amounts to tubing in parallel copies of $T_2$, as in Figure~\ref{fig:tubing}.

\begin{corollary}
Let $L$ and $L'$ be weakly cobordant links, and let $G$ and $G'$ be Seifert surfaces for $L_1$ and $L_1'$, respectively.
%, such that $G$ is disjoint from $L_2\cup L_3$ and $G'$ is disjoint from $L_2'\cup L_3'$.
For any integer $n$, let $G^n$ denote the surface obtained from $G$ by tubing it to $n$ parallel copies of $T_2$; for $n<0$, we use oppositely oriented copies of $T_2$. Then there exists an integer $n$ such that $(L,G^n)$ is weakly cobordant to $(L',G')$. \qed
\end{corollary}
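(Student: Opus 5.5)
Since $L\simeq_w L'$, Proposition~\ref{prop: equiv to weak cobordism} supplies surfaces $A$, $Y_2$, $Y_3$ as in Definition~\ref{defn:weak cob}, together with Seifert surfaces $F$ and $F'$ for $L_1$ and $L_1'$ such that $[F\cup A\cup -F']=0$ in $H_2\bigl(S^3\times[0,1]\smallsetminus(Y_2\cup Y_3)\bigr)$. The plan is to change the Seifert surfaces in two steps: first replace $F'$ by the given $G'$, then replace $F$ by the given $G$. Each replacement changes the relevant class by an integer multiple of a torus class, and these errors are absorbed into a single choice of $n$.

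\emph{Step 1: the right end.} The closed surface $F'\cup -G'$ lies in $S^3\smallsetminus(L_2'\cup L_3')$, so by the discussion following Proposition~\ref{prop: When are surfaces weak cob} its class equals $m[T_2']$ for some integer $m$. Push this torus into $S^3\times\{1\}$, and then slide it along $Y_2$ back to $S^3\times\{0\}$. The class of $\partial\nu(L_2')$ in the complement of $Y_2\cup Y_3$ is then homologous to the class of $T_2$.

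To justify the slide, take a small circle normal to $Y_2$ and sweep it over $Y_2$. The boundary of the normal disk bundle of $Y_2$ is a $3$-manifold whose boundary is $T_2'-T_2$. It is disjoint from $Y_3$ and from $A$, since the tubular neighborhood can be chosen small. Hence $[T_2']=[T_2]$ in $H_2\bigl(S^3\times[0,1]\smallsetminus(Y_2\cup Y_3)\bigr)$, up to orientation conventions, which only affect the sign. It follows that
\[
[F\cup A\cup -G']=\pm m[T_2].
\]

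\emph{Step 2: the left end.} Similarly, $[G\cup -F]=\ell[T_2]$ in $H_2(S^3\smallsetminus(L_2\cup L_3))$, and therefore
\[
[G\cup A\cup -G']=(\ell\pm m)[T_2].
\]
By construction, $G^n$ satisfies $[G^n]=[G]+n[T_2]$ in the complement of $L_2\cup L_3$. Choosing $n=-(\ell\pm m)$ gives $[G^n\cup A\cup -G']=0$. Condition~\ref{item weak cob lifting A} of Proposition~\ref{prop: equiv to weak cobordism} then holds for the pair $G^n,G'$, so $(L,G^n)\simeq_w(L',G')$.

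\emph{Main obstacle.} The one delicate point is the slide in Step 1: showing that $[T_2']$ and $[T_2]$ are homologous in the complement of $Y_2\cup Y_3$ even when $Y_2$ has positive genus. The normal circle bundle construction handles this, because it works for any compact oriented surface. One should also check that $G^n$ remains disjoint from $L_2$ and $L_3$, which holds since the tubes and the parallel tori lie in a neighborhood of $\partial\nu(L_2)$.
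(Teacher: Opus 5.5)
Your proof is correct, and it takes a somewhat different route from the one the paper intends. The paper marks the corollary with \qed, as an immediate consequence of the preceding material.

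\textbf{The paper's route.} The implicit argument reads the result off from the \emph{proof} of Proposition~\ref{prop: equiv to weak cobordism}. There, arbitrary Seifert surfaces $G$ and $G'$ are fixed at the outset. Only $G$ is then modified, by tubing it to copies of the tori about $L_2$ and $L_3$, until one has a surface $G^*$ with $[G^*\cup A\cup -G']=0$. Since $[T_3]=-[T_2]$ in $H_2\bigl(S^3\smallsetminus(L_2\cup L_3)\bigr)$, the closed surface $G^*\cup -G^n$ is null-homologous there for a suitable $n$. Hence $[G^n\cup A\cup -G']=0$, which is the reasoning of Proposition~\ref{prop: When are surfaces weak cob}.

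\textbf{Your route.} You use Proposition~\ref{prop: equiv to weak cobordism} only as a black box. It hands you a pair $(F,F')$ not of your choosing, so you need an extra ingredient: moving the discrepancy at the right end over to the left end. The unit normal circle bundle of $Y_2$ does this. It is a $3$-chain in $S^3\times[0,1]\smallsetminus(Y_2\cup Y_3)$ with boundary $\pm(T_2-T_2')$, and this holds for any compact oriented $Y_2$, whatever its genus or number of components.

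\textbf{Comparison.} Your approach makes the argument self-contained and purely homological. It also shows directly that tubing in a torus at either end is the same move, which is why a single integer $n$ suffices. The paper's route avoids this extra geometric step but relies on the internal structure of an earlier proof.

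\textbf{A minor correction.} The tubes used to build $G^n$ run along arcs from $G$ to the tori, so they need not lie near $\partial\nu(L_2)$. They avoid $L_2\cup L_3$ simply because those arcs do.
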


We adapt Cochran's derivative construction~\cite[Section~4]{C3} to our setting as follows.

% As mentioned earlier, in~\cite[Section~4]{C3} Cochran introduced a notion called a \emph{derivative}. We adapt this construction to our setting as follows.

\begin{definition}\label{defn:deriv}
Let $L=(L_1, L_2, L_3)$ be a $3$-component link with a distinguished component, $G_1$ be a Seifert surface for $L_1$ 
%disjoint from $L_2$ and $L_3$,
 and  $G_2$ be a Seifert surface for $L_2$. 
 %disjoint from $L_1$. 
 After adding handles to $G_2$, arrange that $G_1\cap G_2$ is a single simple closed curve (that is, a knot). This knot inherits an orientation from the orientations of $G_1$ and $G_2$. To be explicit, let $n_{G_i}$ denote the positive normal vector to $G_i$. We orient $L_{12}=G_1\cap G_2$ so that its positive tangent vector is the cross product $n_{G_1}\times n_{G_2}$. The result of pushing this curve off both $G_1$ and $G_2$ in the positive normal direction is called the \emph{derivative} of $L$ with respect to $G_1$ and $G_2$, and we denote it by $L_{12}$. We then define
\[
D(L,G_1)=\left(L_1,\, L_{12},\, L_3,\, G_1\right).
\]
\end{definition}

%Note that the handles added above are disjoint from $L$, and hence preserve the weak cobordism class of $(L,G_1)$ by Proposition~\ref{prop: When are surfaces weak cob}. Furthermore, 
While the definition appears to depend on both $G_1$ and $G_2$, the following lemma shows that the choice of $G_2$ does not matter, at least up to weak cobordism. Compare with~\cite[Theorem~4.2]{C3}.

\begin{lemma}\label{lem:independentofG2}
Let $L$ and $L'$ be links with a distinguished component. Let $G_1$, $G_1'$, $G_2$, and $G_2'$ be Seifert surfaces for $L_1$, $L_1'$, $L_2$, and $L_2'$, respectively.
%, with $G_1\cap L_i=G_1'\cap L_i'=\emptyset$ for $i=2,3$ and with $L_1\cap G_2=L_1'\cap G_2'=\emptyset$.
Let $L_{12}$ and $L_{12}'$ be the resulting derivatives. If $(L,G_1)$ is weakly cobordant to $(L',G_1')$, then $D(L,G_1)$ is weakly cobordant to $D(L',G_1')$.
\end{lemma}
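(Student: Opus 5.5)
We adapt Cochran's proof of \cite[Theorem~4.2]{C3}: build the weak cobordism between the derivatives by intersecting $4$-dimensional data. Since $(L,G_1)\simeq_w(L',G_1')$, Proposition~\ref{prop: equiv to weak cobordism}\ref{item weak cob lifting B} gives surfaces $A$, $Y_2$, $Y_3$ as in Definition~\ref{defn:weak cob} and a compact oriented $3$-manifold $N\subset S^3\times[0,1]$ with $\partial N=G_1\cup A\cup -G_1'$, disjoint from $Y_2$ and $Y_3$. Next we want a $3$-manifold $M$ with $\partial M=G_2\cup Y_2\cup -G_2'$. Its existence follows from $H_2(S^3\times[0,1])=0$. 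We also need $M$ disjoint from $A$. The closed surface $G_2\cup Y_2\cup -G_2'$ lies in $S^3\times[0,1]\smallsetminus A$, where $H_2\cong\Z$ is detected by algebraic intersection with a dual class to the meridian of $A$, i.e.\ by $N$. This intersection is computed by $Y_2\cap N=\emptyset$ together with $G_2\cap G_1$ and $G_2'\cap G_1'$ in the boundary. After the usual correction used in the proof of Proposition~\ref{prop: equiv to weak cobordism}, we take $M\cap A=\emptyset$. If this fails, we instead tube $M$ along $A$: we remove $M\cap A$, a union of circles on the annulus $A$, by ambient surgery using the subsurfaces they cobound in $A$, after first arranging that they are null-homologous in $A$ by tubing $G_2$ to $\partial\nu(L_1)$. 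We also arrange that $M\cap Y_3=\emptyset$ by the same surgery trick. This works because $H_1(Y_3)\to H_1$ of the complement of $Y_2$ is harmless, and the surgery only needs $M\cap Y_3$ to bound in $Y_3$, which again we fix by tubing $G_2$ to $\partial\nu(L_3)$. This step deserves checking, and I expect it to be the main obstacle. The tubing changes $G_2$, but the resulting derivative changes only by a move realizable by a weak cobordism.

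Once $M$ is in hand, put $N$ and $M$ in general position rel boundary. Then $\Sigma=N\cap M$ is a compact oriented surface with
\[
\partial\Sigma=(G_1\cap G_2)\cup -(G_1'\cap G_2')\cup (A\cap M)\cup(N\cap Y_2).
\]
The last two terms are empty, so $\Sigma$ is a cobordism from $L_{12}$ to $L_{12}'$ before pushing off. Push $\Sigma$ off $N$ and $M$ in the positive normal directions, compatibly with the push-offs on the boundary. This gives $Y_{12}$, cobounded by the derivative knots $L_{12}\times\{0\}$ and $L_{12}'\times\{1\}$, and disjoint from $N$, $M$, and hence from $A$ and $G_1\cup G_1'$.

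For $D(L,G_1)\simeq_w D(L',G_1')$ we use the triple $A$, $Y_{12}$, $Y_3$. Disjointness from $A$ holds since $Y_{12}$ is a push-off of a subset of $N$ off $N$. Disjointness from $Y_3$ holds since $\Sigma\subset N$ and $N\cap Y_3=\emptyset$, provided the push-off is small. Condition~\ref{item weak cob lifting B} holds with the same $N$, since $Y_{12}\cap N=\emptyset$ and $N\cap Y_3=\emptyset$. By Proposition~\ref{prop: equiv to weak cobordism}, we get the weak cobordism with the same surfaces $G_1$ and $G_1'$, i.e.\ $D(L,G_1)\simeq_w D(L',G_1')$ in the sense of Definition~\ref{defn: weak cob surf}.

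If the derivatives are computed using surfaces intersecting in a single knot and $\Sigma$ has closed components, these are harmless, since $Y_{12}$ need not be connected.
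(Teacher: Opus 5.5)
Your argument follows the paper's route. You take $N$ from Proposition~\ref{prop: equiv to weak cobordism}\ref{item weak cob lifting B}, build a $3$-manifold $M$ (the paper's $N_2$) bounded by $G_2\cup Y_2\cup -G_2'$, and push $\Sigma=N\cap M$ off to get $Y_{12}$. In one respect you are more complete than the paper's sketch, which never arranges $N_2\cap A=\emptyset$; without that, $N\cap N_2$ has extra boundary circles on $A\subset\partial N$. You observe that $M\cap A$ is a union of circles in the interior of $A$ representing $k[L_1]\in H_1(A)$, which can be killed by tubing and then removed by ambient surgery as in Proposition~\ref{prop: equiv to weak cobordism}. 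That is exactly what is missing there. Your explicit check that the same $N$ certifies condition~\ref{item weak cob lifting B} for $(A,Y_{12},Y_3)$ is also only implicit in the paper.

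Three points need fixing.

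\emph{First}, the step making $M\cap Y_3=\emptyset$ cannot work and should be deleted. Since $M\cap(S^3\times\{0\})=G_2$, and $G_2$ is only required to miss $L_1$, we have $M\cap Y_3\supseteq G_2\cap L_3$. This set has algebraic count $\lk(L_2,L_3)$, and these points on $\partial Y_3$ cannot be surgered away. Also, vanishing of $H_1(Y_3)\to H_1(S^3\times[0,1]\smallsetminus Y_2)$ is not a hypothesis of Definition~\ref{defn:weak cob}. Fortunately you never use this step: disjointness of $Y_{12}$ from $Y_3$ follows from $\Sigma\subset N$ and $N\cap Y_3=\emptyset$, as you say.

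\emph{Second}, $H_2(S^3\times[0,1]\smallsetminus A)=0$, not $\Z$ (see Remark~\ref{rmk: what if I-equiv}). Moreover, intersecting a $2$-cycle with the $3$-manifold $N$ does not yield an integer. The real obstruction is the integer $k$ above.

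\emph{Third}, tubing $G_2$ to $\partial\nu(L_1)$ along an arc missing $G_1$ adds a Seifert longitude $\lambda\subset G_1$ of $L_1$ to $G_1\cap G_2$. The derivative at that end therefore becomes $L_{12}\sqcup\lambda^+$ (or a band sum), and your claim that this is a weak-cobordism move needs an argument. It is easy:
\begin{itemize}
\item Push $\lambda^+$ farther off $G_1$ than $L_{12}$; this is possible because $\lambda$ and $G_1\cap G_2$ are disjoint on $G_1$.
\item There $\lambda^+$ bounds a shrunken parallel copy of $G_1$, disjoint from $G_1$, $L_1$, $L_3$ and $L_{12}$.
\item Together with $L_1\times[0,1]$, $L_3\times[0,1]$ and $G_1\times[0,1]$, this gives a weak cobordism of pairs.
\item Stack it with the weak cobordism you built.
\end{itemize}
Alternatively, keep $G_2$ fixed and change $[M\cap A]$ by $\pm[L_1]$ by taking the oriented cut-and-paste of $M$ with a level $S^3\times\{s\}$. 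This does not change $\partial M$.
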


\begin{proof}
The proof is essentially the same as~\cite[Theorem~4.2]{C3}, with only some bookkeeping needed to track the component $L_3$. We summarize the main ideas.

Let $A$, $Y_2$, and $Y_3$ be the surfaces from Definition~\ref{defn:weak cob}, and let $N$ be a $3$-manifold bounded by $G_1\cup A\cup -G_1'$. Since $H_2(S^3\times[0,1])=0$, the surface $G_2\cup Y_2\cup -G_2'$ bounds a $3$-manifold $N_2$. By transversality, $N\cap N_2$ is a properly embedded surface. If we push $N\cap N_2$ off both $N$ and $N_2$ in the positive normal directions, we obtain a surface bounded by $L_{12}\cup -L_{12}'$; denote this surface by $Y_{12}$. Since $N$ is disjoint from $Y_3$, the surface $Y_{12}$ is disjoint from $A$ and $Y_3$. Thus $(L_1, L_{12}, L_3, G_1)$ is weakly cobordant to $(L_1', L_{12}', L_3', G_1')$, as claimed.
\end{proof}

According to Lemma~\ref{lem:independentofG2}, the assignment $(L,G)\mapsto D(L,G)$ gives a well-defined function modulo weak cobordism. We are now ready to define our invariant. For any link $L=(L_1,L_2,L_3)$ with a distinguished component, and any Seifert surface $G$ for $L_1$ disjoint from $L_2$ and $L_3$, we write the $k$-fold iterate of the derivative operator as
\[
D^k(L,G)=(L_1,L_{1^k2},L_3,G).
\]
We then define
\[
\gamma^k(L,G):=\lk(L_{1^k2},L_3).
\]
Since linking number is invariant under weak cobordism and the derivative operator is well defined on weak cobordism classes, we obtain the following:

\begin{theorem}\label{thm: gamma is invt of weak cob with surface}
For each $k\in \N$, the integer $\gamma^k(L,G)\in \Z$ depends only on the weak cobordism class of $(L,G)$. \qed
\end{theorem}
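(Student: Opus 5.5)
The proof is an induction on $k$ built from two ingredients. The first is that linking number of the second and third components is an invariant of weak cobordism. The second is Lemma~\ref{lem:independentofG2}, which says the derivative operator respects weak cobordism of pairs.

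Suppose $(L,G)\simeq_w(L',G')$. Applying Lemma~\ref{lem:independentofG2} once gives $D(L,G)\simeq_w D(L',G')$, which is again a weak cobordism of pairs in the sense of Definition~\ref{defn: weak cob surf}, with the same Seifert surfaces $G$ and $G'$ carried along. Iterating, we get $D^k(L,G)\simeq_w D^k(L',G')$ for every $k$. The lemma also handles the auxiliary choices of Seifert surfaces for the second component at each stage: it shows that different choices give weakly cobordant results. So $D^k(L,G)$ is a well-defined weak cobordism class of pairs, and it suffices to show that $\lk(L_2,L_3)$ is an invariant of weak cobordism. Applied to $D^k(L,G)=(L_1,L_{1^k2},L_3,G)$, this gives the claim for $\gamma^k$.

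For that invariance, let $Y_2$ and $Y_3$ be the disjoint surfaces in $S^3\times[0,1]$ from Definition~\ref{defn:weak cob}. Choose a Seifert surface $F$ for $L_3$ in $S^3\times\{0\}$ and a Seifert surface $F'$ for $L_3'$ in $S^3\times\{1\}$. Since $H_2(S^3\times[0,1])=0$, the closed surface $F\cup Y_3\cup -F'$ bounds a compact oriented $3$-manifold $M$, which we may take transverse to $Y_2$. Then $M\cap Y_2$ is a compact $1$-manifold whose boundary lies in $F\cup -F'$, namely at the points of $L_2\cap F$ and $L_2'\cap F'$. The interior of $Y_2$ misses $Y_3$, so $\partial(M\cap Y_2)$ has no points on $Y_3$. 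Counting boundary points of this $1$-manifold with sign gives $\lk(L_2,L_3)-\lk(L_2',L_3')=0$.

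Condition~\pref{item: weak cob - lifting} and the annulus $A$ play no role in this count. They enter only through Lemma~\ref{lem:independentofG2}, which is already established. The step needing the most care is the sign and orientation bookkeeping in the boundary count. The standard alternative is to note that $Y_2$ and $Y_3$ are disjoint in $S^3\times[0,1]$ and to compare intersection numbers: $\lk(L_2,L_3)$ equals the intersection number of $Y_2\cup F_2$ with $Y_3\cup F_3$ after capping. I expect no genuine obstacle, so the theorem follows.
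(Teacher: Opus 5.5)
Your proof is correct and follows the paper's own argument: iterate Lemma~\ref{lem:independentofG2} to see that $D^k(L,G)$ is well defined up to weak cobordism of pairs, then use that the linking number of the second and third components is a weak cobordism invariant. The paper only asserts that invariance; your signed boundary count on $M\cap Y_2$ is a valid standard justification of it.
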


Of course, we do not want an invariant that depends on the choice of Seifert surface $G$. In Section~\ref{sect eta invt}, we prove Theorem~\ref{thm:main}, which explains how to extract from $\gamma^k$  an invariant independent of this choice. For now, we carry out some computations, both to build intuition and to illustrate the computability of these invariants.

%\begin{proof}
%As we have already proven that the derivative operator (and so any iterate of the derivative operator) is well defined on weak cobordism, we need only prove that $\lk(L_2,L_3)$ is invariant under weak cobordism. 

%This is a straightforward exercise in what linking number means,  CITE ROLFSEN but we include a summary.  Let $Y_i$ be the surface bounded by $L_i\cup -L_i'$ and $G_2,G_3, G_2',G_3'$ be any Seifert surfaces. Push $G_3$ and $G_3'$ slightly into $S^3\times[0,1]$.  

%Then $Z_i=G_i\cup Y_i\cup G_i'$ is a surface in $S^3\times[0,1]$ for $i=2,3$. The algebraic intersection is now given by $Z_2\cdot Z_3 = L_2\cdot G_3 - L_2'\cdot G_3'$.  Since $H_2(S^3\times[0,1])=0$, the algebraic intersection is zero.  The proof is completed by recalling that  $L_2\cdot G_3 = \lk(L_2,L_3)$ and $L_2'\cdot G_3'=\lk(L_2,L_3')$.  

%\end{proof}

\subsection{Some computations}\label{subsec:computations}We begin with an example that illustrates how easily $\gamma^k(L,G)$ can be computed. Consider the link $L=(L_1, L_2, L_3)$ in Figure~\ref{fig:example1}, together with the Seifert surface $G=G_1$ for $L_1$. Then
\[
\gamma^0(L,G_1)=\lk(L_2,L_3)=1.
\]
As in Figure~\ref{fig:example1SeifSurf}, the component $L_2$ bounds a genus one Seifert surface $G_2$ that is disjoint from $L_1$, obtained by starting with a disk intersecting $L_1$ in two points with opposite signs and tubing those two points together. Recall that the derivative $L_{12}=G_1\cap G_2$ is oriented so that its positive tangent vector is $n_{G_1}\times n_{G_2}$. For the resulting derivative $L_{12}$, we have
\[
\gamma^1(L,G_1)=\lk(L_{12},L_3)=-1.
\]
Notice that $L_{12}$ is homologous to the reverse of $L_2$ in the complement of $G_1$. From here, a direct induction shows that
\[
\gamma^k(L,G_1)=(-1)^k
\]
for all $k$, so $\gamma(L,G_1)=(1,-1,1,-1,\dots)$.

\begin{figure}[!htbp]
     \centering
     \begin{subfigure}[t]{0.3\textwidth}
         \centering
         \begin{tikzpicture}
         \node at (0,0){\includegraphics[width=.9\textwidth]{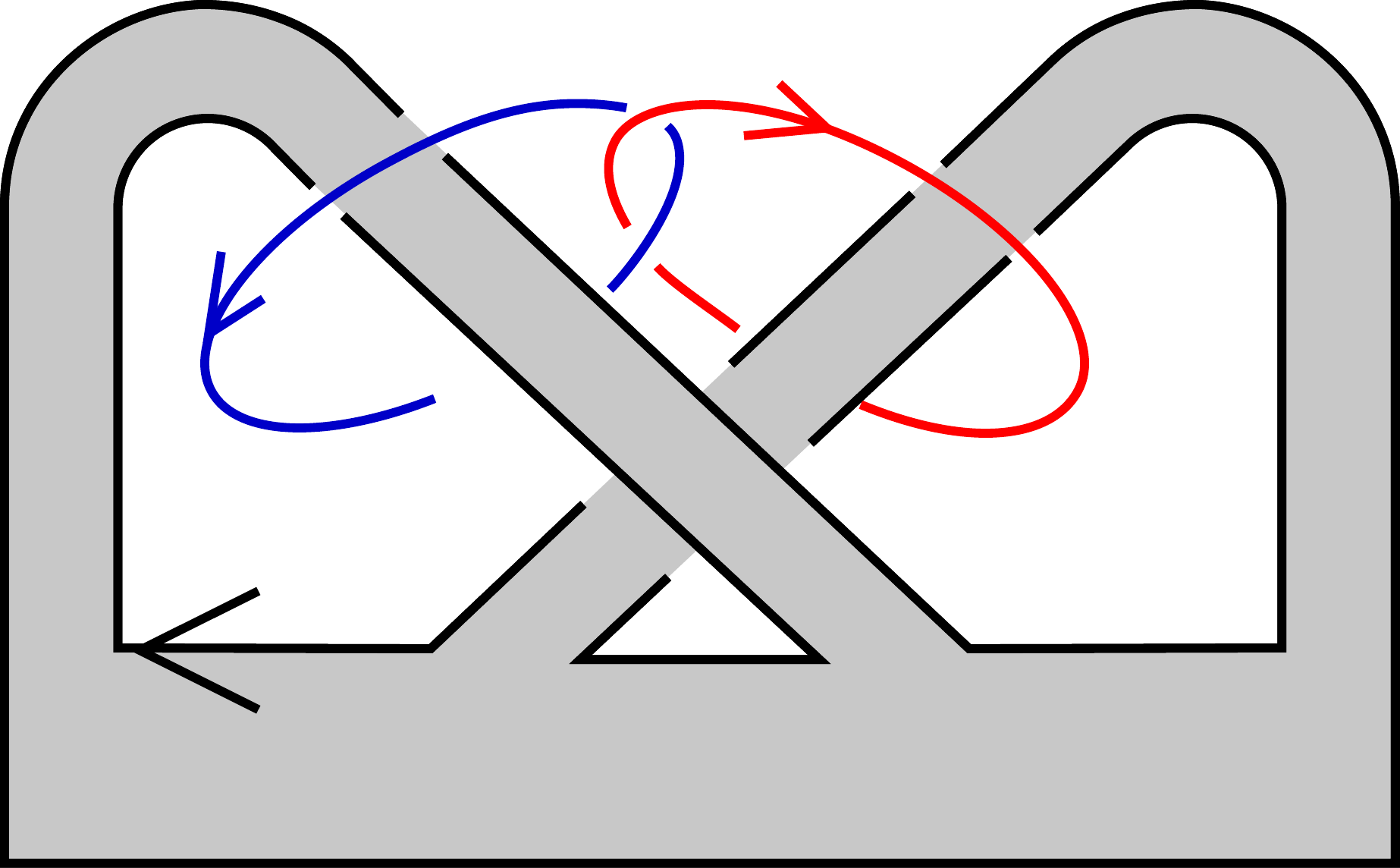}};
         \node at (-1.3,1.5){$L_1$};
         \node at (0,-1){$G_1$};
         \node[blue] at (-.5,1.25){$L_2$};
         \node[red] at (.5,1.25){$L_3$};
         \end{tikzpicture}
         \caption{A $3$-component link with a Seifert surface for $L_1$.}\label{fig:example1}
     \end{subfigure}
     \hfill
     \begin{subfigure}[t]{0.3\textwidth}
     \centering
         \begin{tikzpicture}
         \node at (0,0){\includegraphics[width=.9\textwidth]{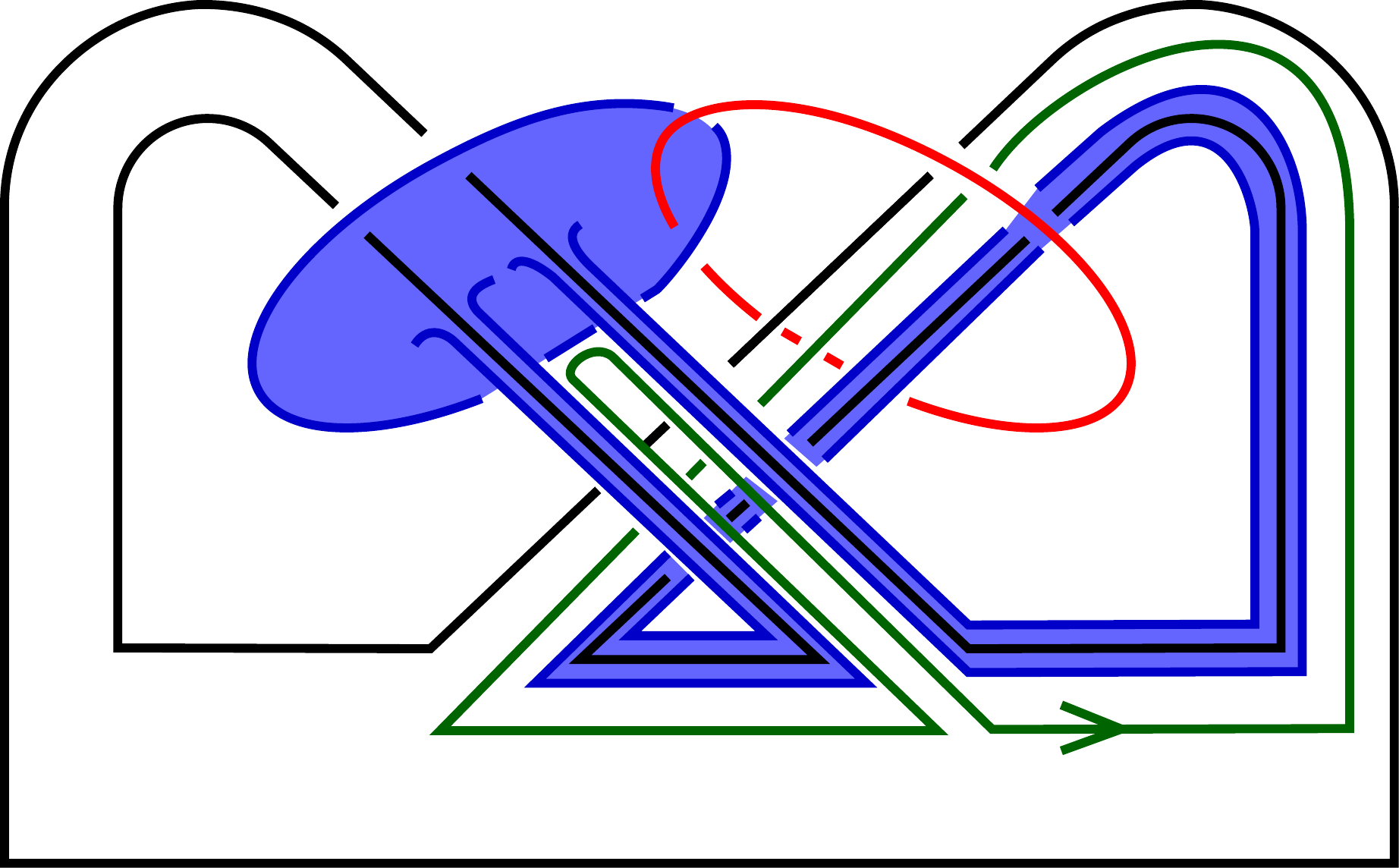}};
         \end{tikzpicture}
         \caption{A Seifert surface for $L_2$ and the resulting derivative.}\label{fig:example1SeifSurf}
         \end{subfigure}
         \hfill
     \begin{subfigure}[t]{0.3\textwidth}
     \centering
         \begin{tikzpicture}
         \node at (0,0){\includegraphics[width=.9\textwidth]{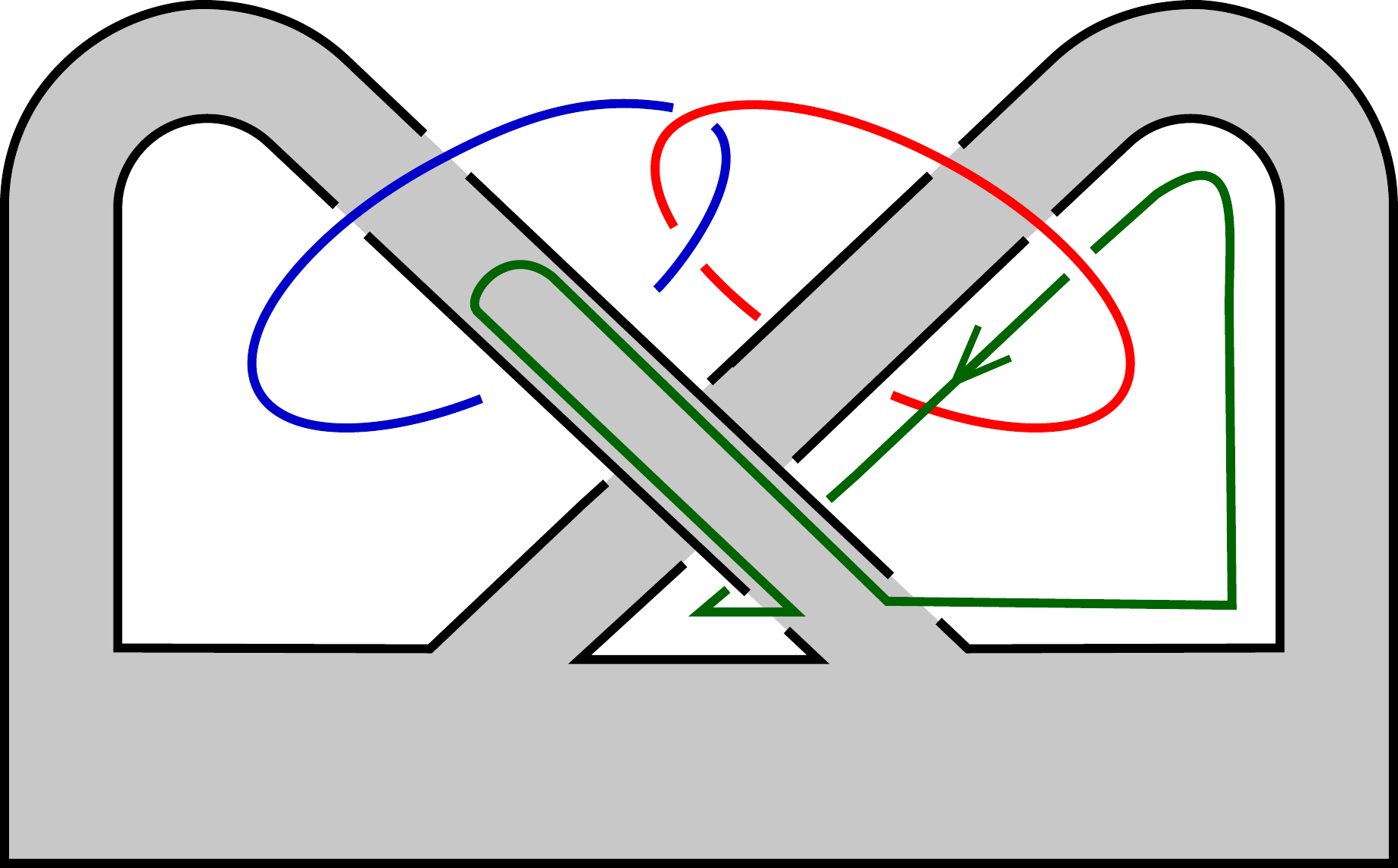}};
         \node at (1.3,-.3) {$L_{12}$};
         \end{tikzpicture}
         \caption{$L_{12}$ is isotopic to the reverse of $L_2$.}\label{fig:example1Deriv}
         %\label{fig:claspersurgery}
     \end{subfigure}
     \caption{A $3$-component link with a chosen Seifert surface and the resulting derivative.}\label{fig: Example1Deriv}
     \end{figure}

The next example illustrates that $\gamma^k(L,G)$ can depend on the choice of Seifert surface $G$. In Figure~\ref{fig:example1DiffSurf} we consider the same link $L=(L_1, L_2, L_3)$ as in Figure~\ref{fig:example1}, but with a different Seifert surface $G_1'$.  This results in $G_1'\cap G_2$ being disconnected.  Adding a tube to $G_2$ results in the derivative $L_{12}$ seen in Figure~\ref{fig:example1DiffSurfDeriv}. Then
\[
\gamma^1(L,G_1')=\lk(L_{12},L_3)=0.
\]
In fact, since $L_{12}$ is null-homologous in the exterior of $G_1'$, we have $\gamma^k(L,G_1')=0$ for all $k>0$. This is consistent with Theorem~\ref{thm:main}, since $(T+\Id)(1,-1,1,-1,\dots)=(1,0,0,\dots)$. Recall that $T$ denotes the right shift operator.

\begin{figure}[!htbp]
     \centering
     \begin{subfigure}[t]{0.3\textwidth}
         \centering
         \begin{tikzpicture}
         \node at (0,0){\includegraphics[width=.85\textwidth]{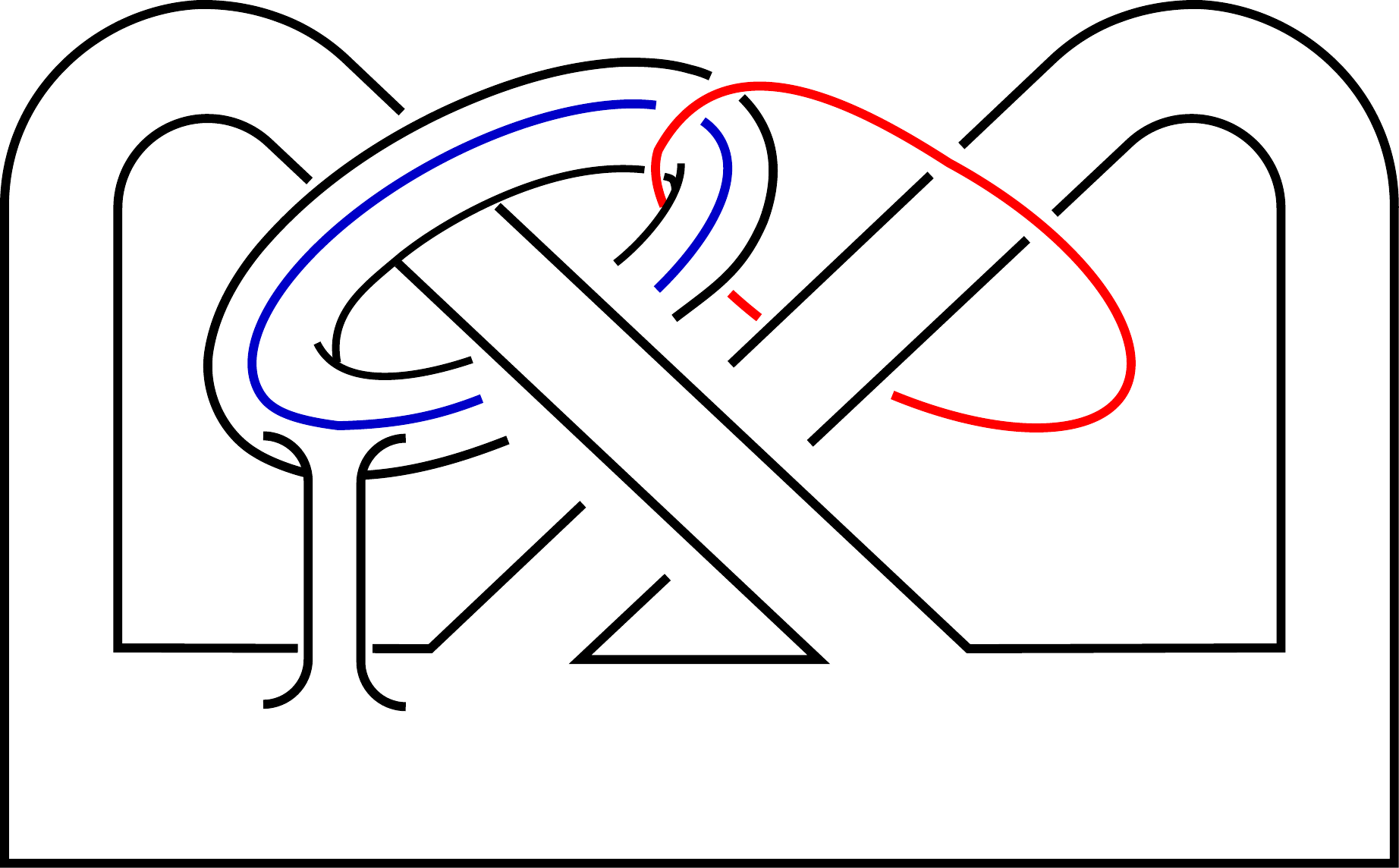}};
         %\node at (-1.3,1.5){$L_1$};
         \node at (0,-1){$G_1'$};
         %\node[blue] at (-.5,1.25){$L_2$};
         %\node[red] at (.5,1.25){$L_3$};
         \end{tikzpicture}
         \caption{The same link as in Figure~\ref{fig:example1} with a different Seifert surface.}\label{fig:example1DiffSurf}
     \end{subfigure}
     \hfill
     \begin{subfigure}[t]{0.3\textwidth}
     \centering
         \begin{tikzpicture}
         \node at (0,0){\includegraphics[width=.85\textwidth]{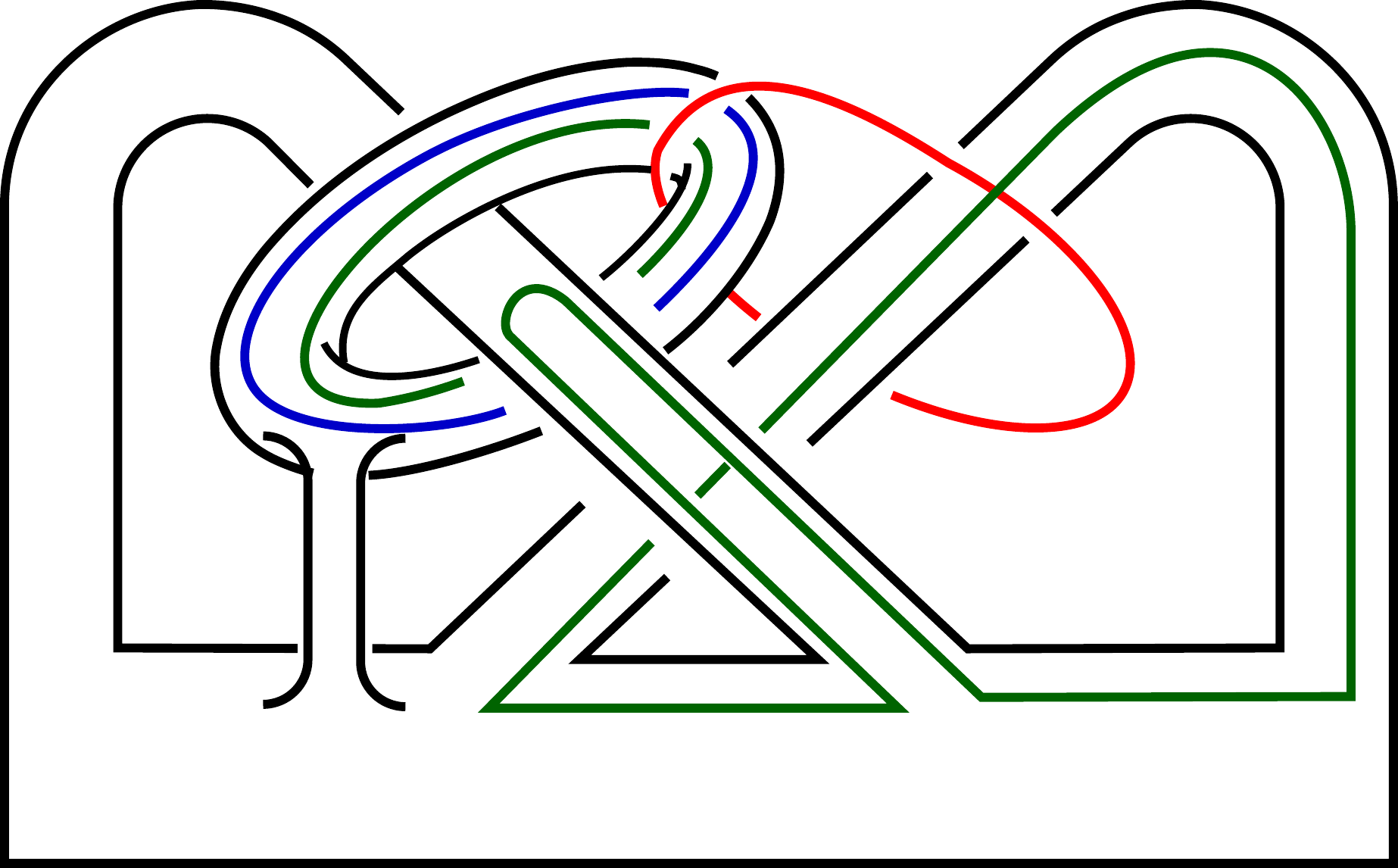}};
         \end{tikzpicture}
         \caption{Using the same surface as in Figure~\ref{fig:example1SeifSurf} results in $G_1'\cap G_2$ being disconnected.}\label{fig:example1DiffSurfAlmostDeriv}
         \end{subfigure}
         \hfill
     \begin{subfigure}[t]{0.3\textwidth}
     \centering
         \begin{tikzpicture}
         \node at (0,0){\includegraphics[width=.85\textwidth]{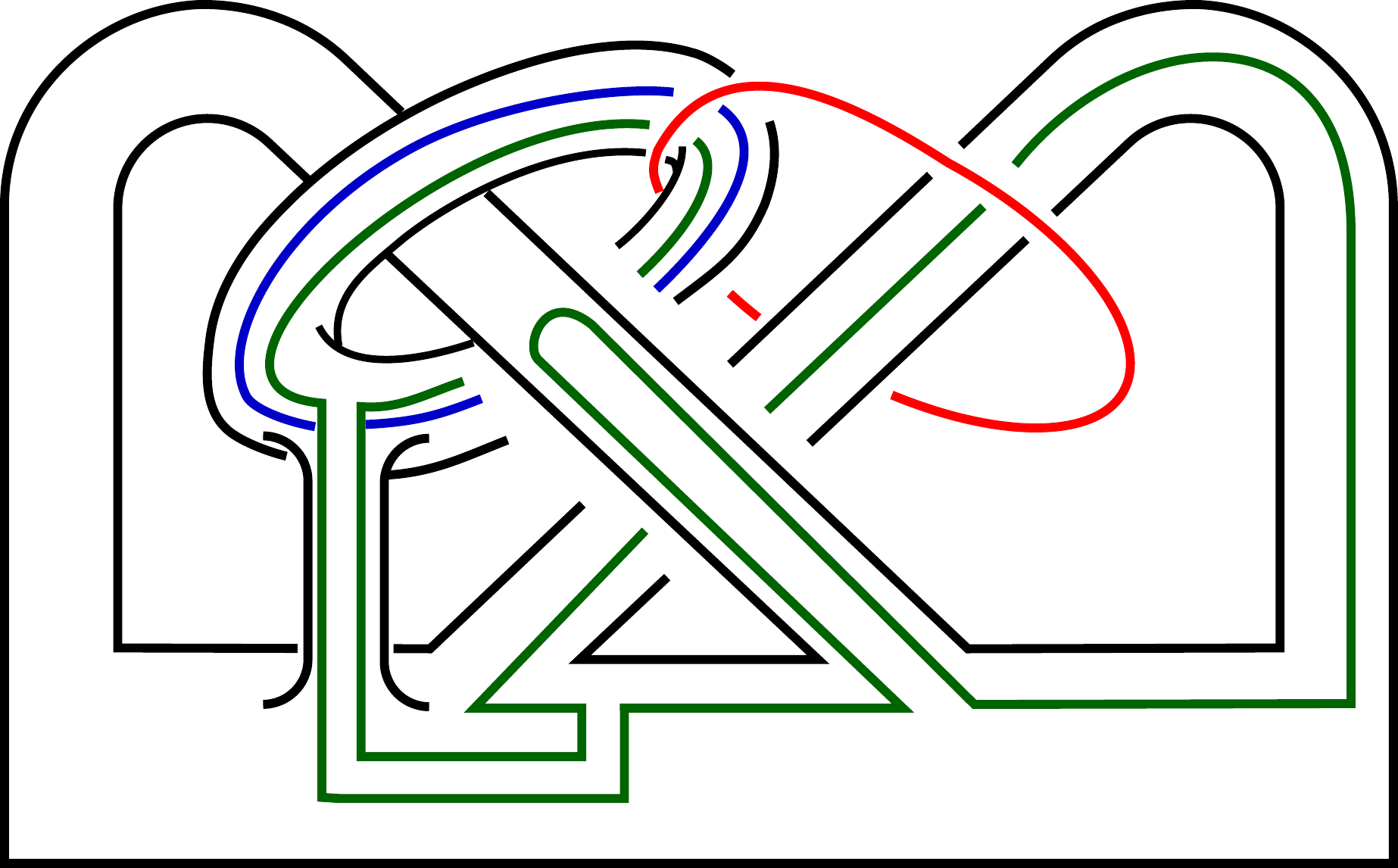}};
         \node at (1.3,-1.1) {$L_{12}$};
         \end{tikzpicture}
         \caption{The resulting derivative $L_{12}$ is unknotted and unlinked from $G_1'\cup L_3$.}\label{fig:example1DiffSurfDeriv}
         %\label{fig:claspersurgery}
     \end{subfigure}
     \caption{A different choice of Seifert surface and the resulting derivative.}\label{fig: Example1DiffSurfDeriv}
     \end{figure}

%We will resolve the ill-definedness issue in the next section.  For now, let us illustrate nontriviality. 

\begin{figure}[!htbp]
     \centering
         \begin{subfigure}{.45\textwidth}
\begin{tikzpicture}
\node[] at (0.2, 1.2){$L_1$};
\node[red] at (-2.45, 0){$L_2$};
\node[blue] at (2.5, 1.4){$L_3$};
         \node at (0,0){\includegraphics[angle=0,width=.9\textwidth]{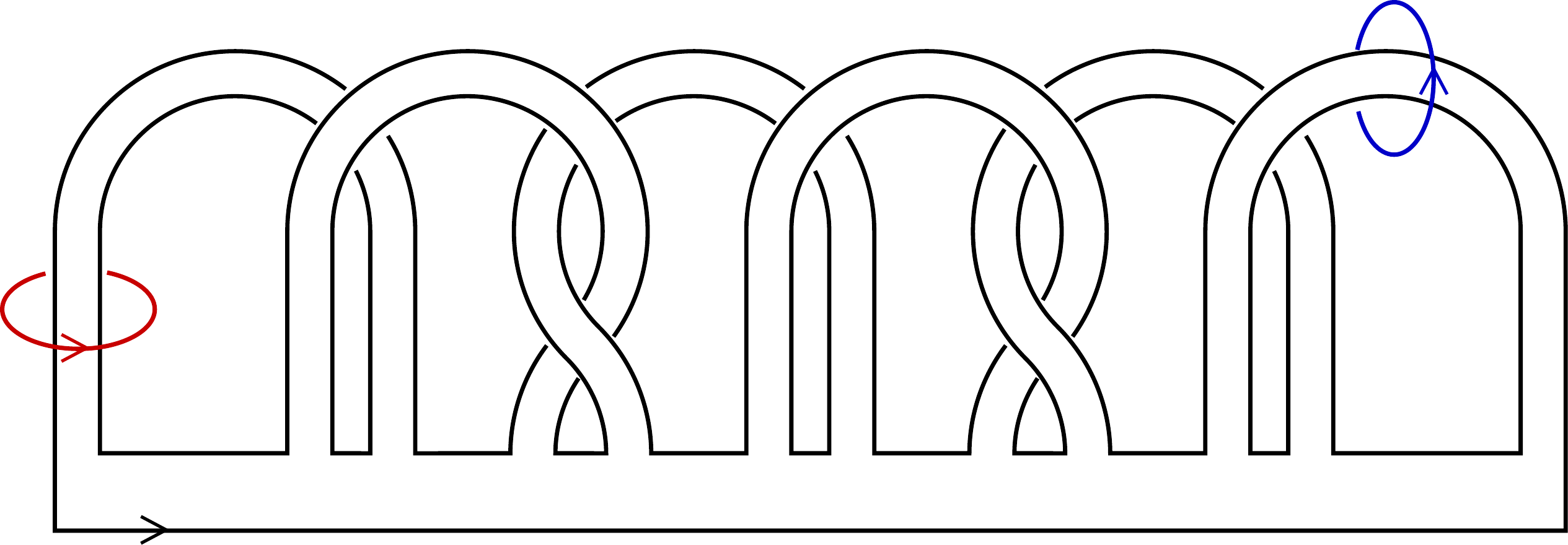}};
         \end{tikzpicture}
        \caption{}\label{nonzero gamma 3 1}
         \end{subfigure}
         %%%%%%%%%%%%%%
         \begin{subfigure}{.45\textwidth}
\begin{tikzpicture}
\node[] at (0.2, 1.2){$L_1$};
\node[red] at (-2.4, .45){$G_2$};
\node[blue] at (2.5, 1.4){$L_3$};
         \node at (0,0){\includegraphics[angle=0,width=.9\textwidth]{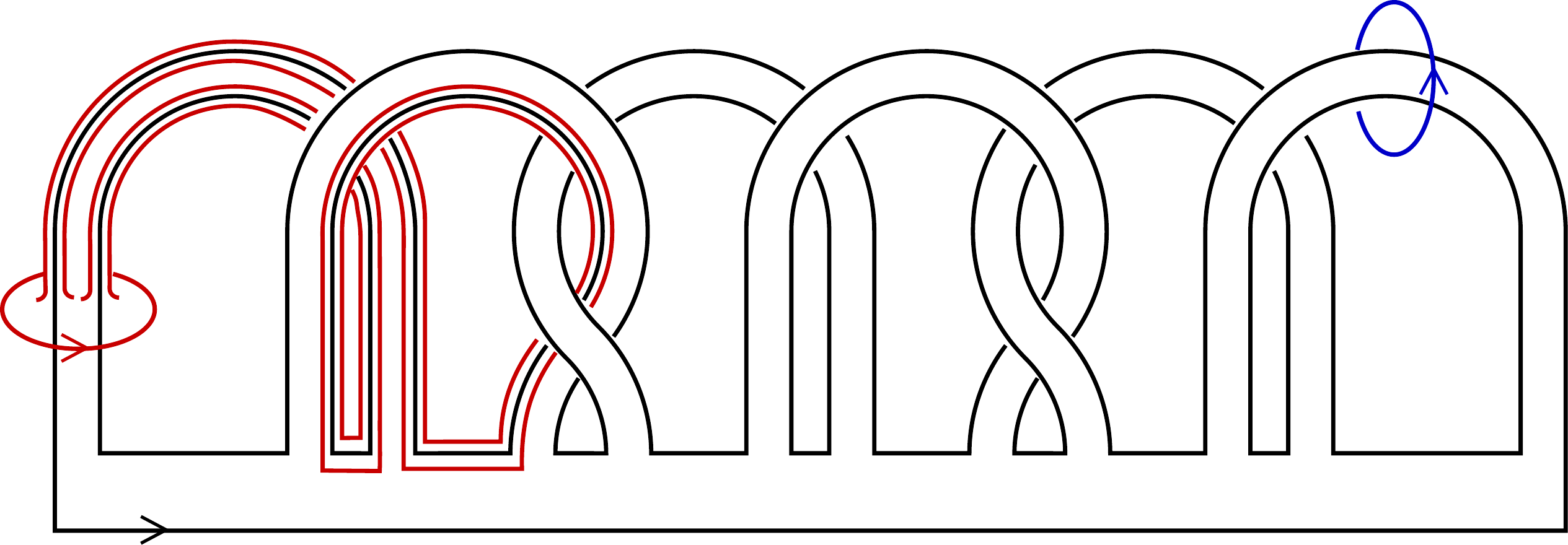}};
         \end{tikzpicture}
        \caption{}
         \end{subfigure}
         %%%%%%%%%%%%
         \begin{subfigure}{.45\textwidth}
\begin{tikzpicture}
\node[] at (0.2, 1.2){$L_1$};
\node[red] at (-2.55, .45){$L_{12}$};
\node[blue] at (2.5, 1.4){$L_3$};
         \node at (0,0){\includegraphics[angle=0,width=.9\textwidth]{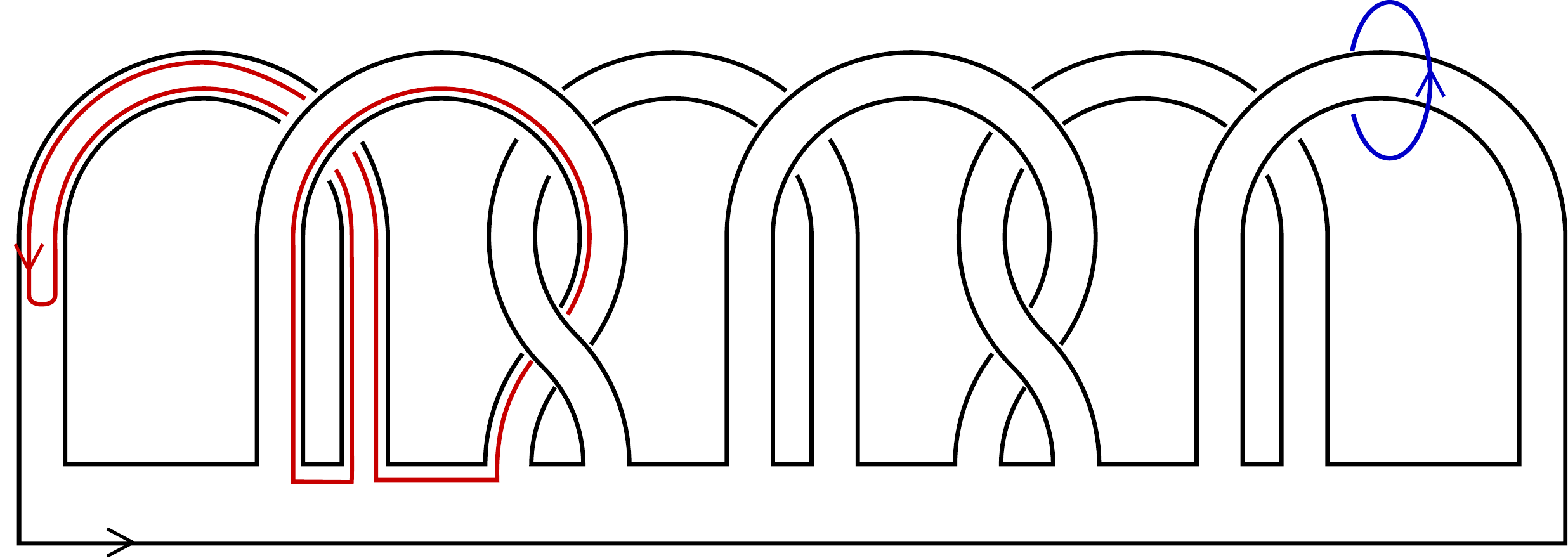}};
         \end{tikzpicture}
        \caption{}\label{nonzero gamma 3 2}
         \end{subfigure}
         %%%%%%%%%%%%
         \begin{subfigure}{.45\textwidth}
\begin{tikzpicture}
\node[] at (0.2, 1.2){$L_1$};
\node[red] at (-1.45, -.2){\small{$L_{12}$}};
\node[blue] at (2.5, 1.4){$L_3$};
         \node at (0,0){\includegraphics[angle=0,width=.9\textwidth]{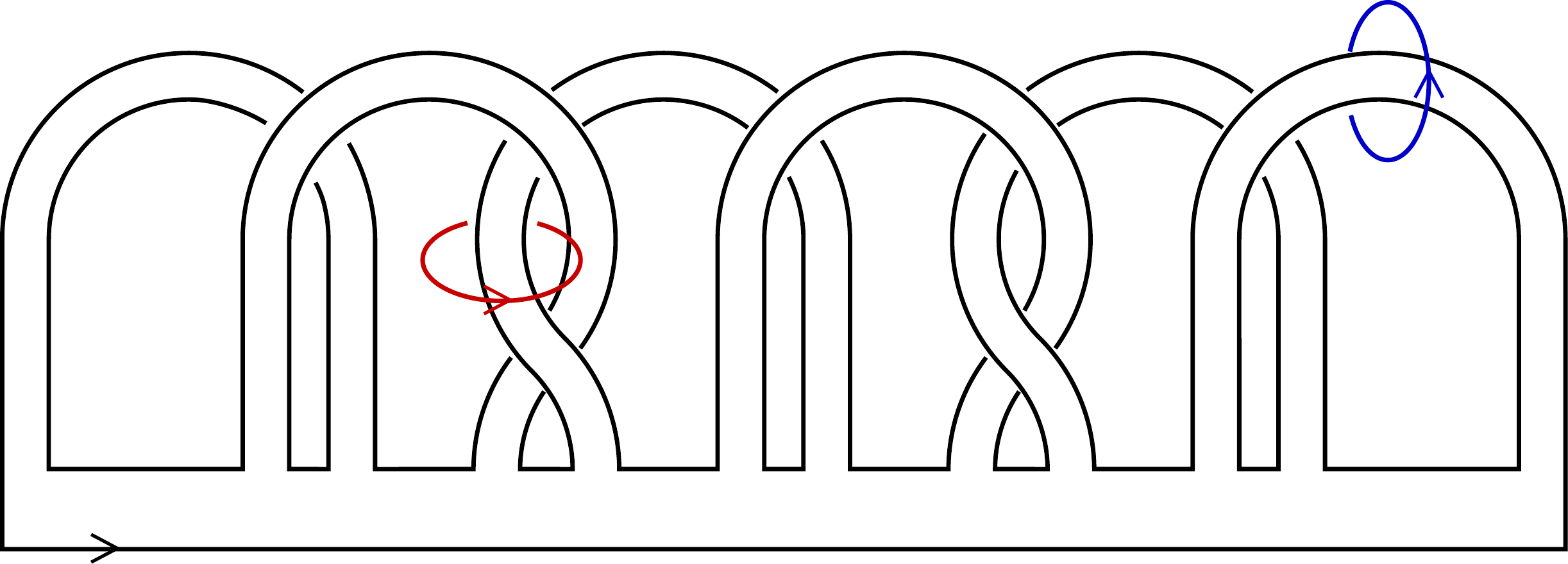}};
         \end{tikzpicture}
        \caption{}\label{nonzero gamma 3 3}
         \end{subfigure}
         %%%%%%%%%%%%
         \begin{subfigure}{.45\textwidth}
\begin{tikzpicture}
\node[] at (0.2, 1.2){$L_1$};
\node[red] at (.6, -.3){\small{$L_{112}$}};
\node[blue] at (2.5, 1.4){$L_3$};
         \node at (0,0){\includegraphics[angle=0,width=.9\textwidth]{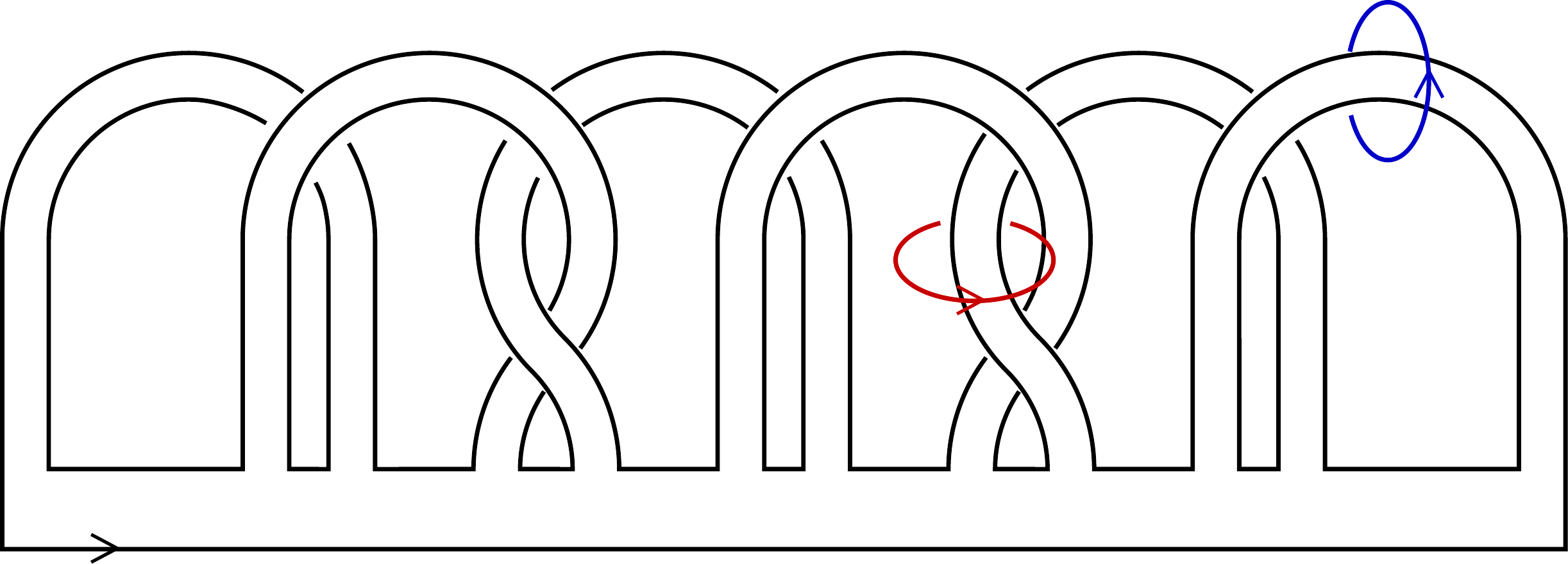}};
         \end{tikzpicture}
        \caption{}\label{nonzero gamma 3 4}
         \end{subfigure}
         %%%%%%%%%%%%
         \begin{subfigure}{.45\textwidth}
\begin{tikzpicture}
\node[] at (0.2, 1.2){$L_1$};
\node[red] at (2.8, .1){\small{$L_{1112}$}};
\node[blue] at (2.5, 1.4){$L_3$};
         \node at (0,0){\includegraphics[angle=0,width=.9\textwidth]{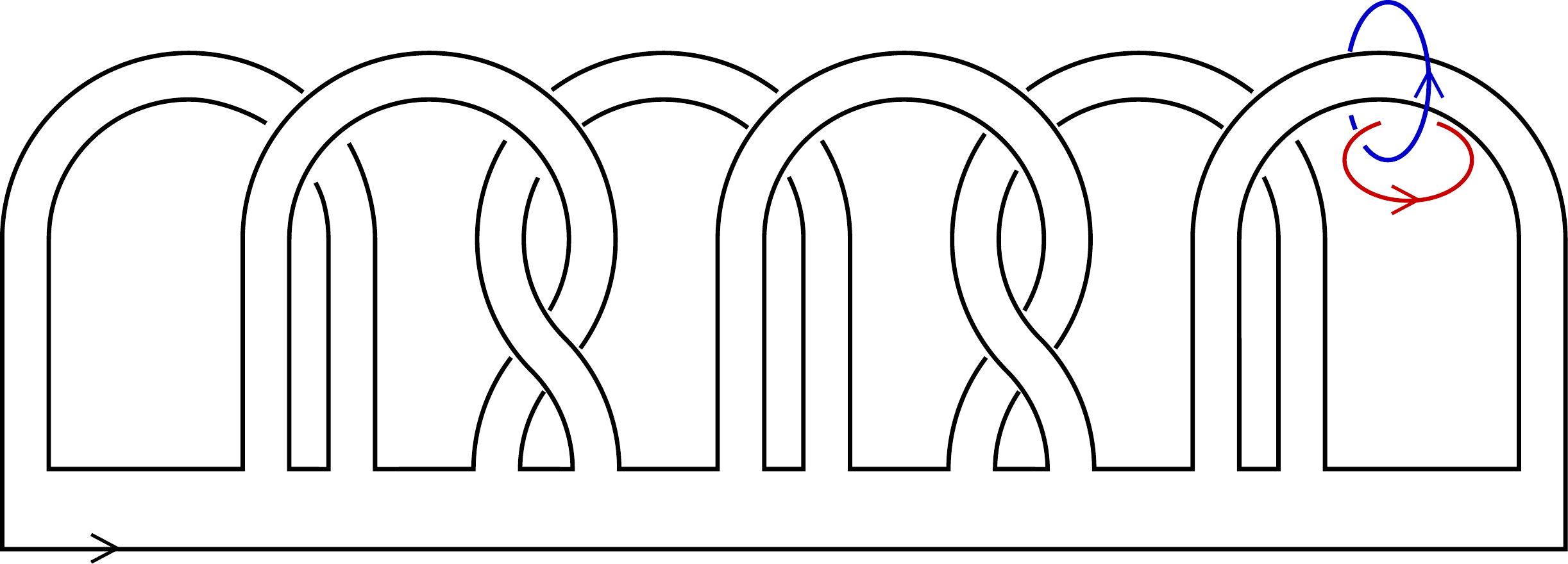}};
         \end{tikzpicture}
        \caption{}\label{nonzero gamma 3 5}
         \end{subfigure}
        \caption{A $3$-component link and its first three derivatives. 
(A) The link $(L_1,L_2,L_3)$, where $L_1$ bounds the evident Seifert surface $G_1$. 
(B) A Seifert surface $G_2$ for $L_2$. 
(C) The derivative $L_{12}$. 
(D) An isotopy of $L_{12}$ disjoint from $G_1$. 
(E) The derivative $L_{112}$. 
(F) The derivative $L_{1112}$.}
        \label{fig: nonzero gamma 3}
\end{figure}

In Figure~\ref{nonzero gamma 3 1} we see a $3$-component link $L=(L_1, L_2, L_3)$ together with its first three derivatives. This shows that $\gamma^3(L)=1$ and that $\gamma^k(L)=0$ for $k\neq 3$. This example readily generalizes to produce, for any $p$, links satisfying $\gamma^p(L)=1$ and $\gamma^k(L)=0$ for all $k\neq p$. 

More generally, given any finite sequence $a_0,a_1,a_2,\dots,a_n$ of integers, we can produce a link with $\gamma^k(L)=a_k$ for $k\le n$ and $\gamma^k(L)=0$ for $k>n$. We provide an explicit example.  Start with the link in Figure~\ref{fig: arbitrary gamma}, let $G_1$ be the obvious Seifert surface for $L_1$ and construct $L_3$ by banding together $a_0$ copies of $L_3^0$, $a_1$ copies of $L_3^1$, and so on, using bands disjoint from $G_1$. 

In~\cite{Jin91}, Jin classifies which sequences arise as Cochran's $\beta$-invariants; his work is based on a connection to Kojima--Yamasaki's $\eta$-invariant~\cite{KojYam79}. While we do not pursue this here, we expect that one can similarly characterize exactly which sequences can arise as $\gamma$-invariants of links. See Problem~\ref{problem: classify gamma sequences}.

\begin{figure}[!htbp]
     \centering
         \begin{tikzpicture}
         \node at (0,0){\includegraphics[width=.8\textwidth]{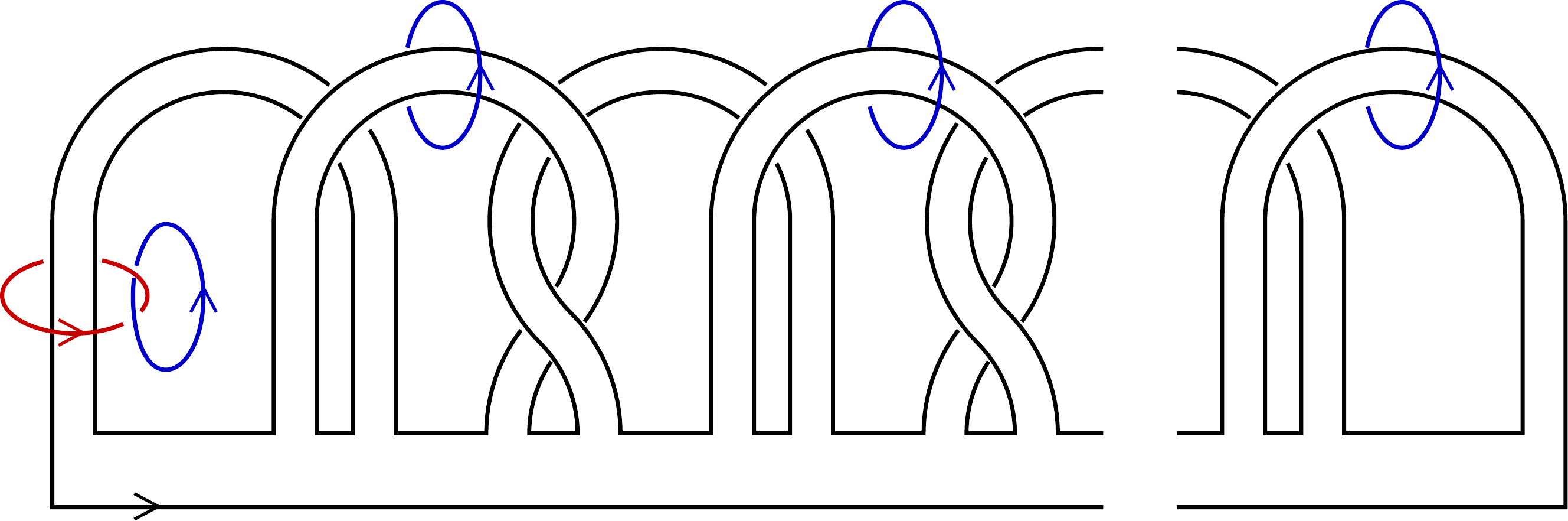}};
         %\node at (1,2){$\dots$};
         %\node at (1,-2){$\dots$};
         \node at (-6.5,-.75){$L_2$};
          \node at (-5,.5){$L_3^0$};
         \node at (0,-1.75){$L_1$};
         \node at (-2.9,.7){$L_3^1$};
         \node at (.9,.7){$L_3^2$};
         \node at (5.2,.7){$L_3^{n}$};
         \node at (3.05,1.6){$\dots$};
         \node at (3.05,-1.75){$\dots$};
         \end{tikzpicture}
        \caption{Constructing $L_3$ by banding together $a_0$ copies of $L_3^0$, $a_1$ copies of $L_3^1$, and so on, produces a link with $\gamma^k(L)=a_k$ for $k\le n$ and $\gamma^k(L)=0$ for $k>n$.}
        \label{fig: arbitrary gamma}
        \end{figure}

\subsection{Computation from a Seifert matrix}

In~\cite[Section~8]{C3}, Cochran explains how to compute his iterated derivatives and the resulting $\beta$-invariants in terms of a Seifert matrix. We now do the same for our $\gamma$-invariants. Indeed, let $G_1$ and $G_2$ be Seifert surfaces for $L_1$ and $L_2$, respectively, that intersect in a simple closed curve $L_{12}^0$. (We do not yet denote this curve by $L_{12}$, since we have yet to push it off from $G_1$.) Cutting $G_2$ open along $G_1$ yields a cobordism from $i^+(L_{12}^0)-i^-(L_{12}^0)$ to $L_2$. Equivalently,
\[
(i^+_*-i^-_*)[L_{12}^0]=[L_2]\in H_1(S^3\smallsetminus G_1).
\]
Here $i^+,i^-\colon G_1\to S^3\smallsetminus G_1$ are the maps obtained by pushing $G_1$ off itself in the positive and negative normal directions, respectively. Since $i^+_*-i^-_*\colon H_1(G_1)\to H_1(S^3\smallsetminus G_1)$ is an isomorphism, the class $[L_{12}^0]\in H_1(G_1)$ is determined by $[L_2]$ and, in particular, is independent of the choice of $G_2$. It follows that
\[
\gamma^1(L,G_1)=\lk(L_{12},L_3)=\lk(L_{12}^0,L_3)
\]
depends only on $[L_2]$ and $[L_3]$ in $H_1(S^3\smallsetminus G_1)$. Since $[L_{12}]=i^+_*([L_{12}^0])$, a direct induction shows that for all $k>1$, the quantity $\gamma^k(L,G_1)$ depends only on the classes $[L_2]$ and $[L_3]$ in $H_1(S^3\smallsetminus G_1)$. {Expressing all of these relations in homology with respect to a chosen basis gives the following. A more general result appears in \cite[Theorem~4.1]{Tsukamoto-Yasuhara:2007}.  We include our proof here as an illustration of the computability of this invariant.}

%The proof of the following theorem amounts to unpacking these maps with respect to a choice of basis. This yields Theorem~\ref{thm:seifert-matrix}, which we now recall.

\begin{proposition}[{\cite[Theorem~4.1]{Tsukamoto-Yasuhara:2007}}]\label{thmbody:seifert-matrix}
Let $L=(L_1,L_2,L_3)$ be a $3$-component link whose distinguished component bounds a Seifert surface $G$. Let $\{a_1,\ldots,a_{2g}\}$ be a basis for $H_1(G)$ and let
$\{\alpha_1,\ldots,\alpha_{2g}\}$ be the dual basis for $H_1(S^3\smallsetminus G)$. Let $V$ be the
resulting Seifert matrix for $G$, and let $v_2,v_3\in \Z^{2g}$ be the column vectors representing
$[L_2]$ and $[L_3]$ in $H_1(S^3\smallsetminus G)$ with respect to $\{\alpha_i\}$. Set $A:=V-V^T$.
Then, for every positive integer $k$,
\[
\gamma^k(L,G)
=
\Bigl(A^{-1}(VA^{-1})^{k-1}v_2\Bigr)^T v_3.
\]
\end{proposition}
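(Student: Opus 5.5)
We unpack the homological description given just before the statement, working in coordinates. Let $\theta\colon H_1(G)\times H_1(S^3\smallsetminus G)\to\Z$ be the Alexander duality pairing $\theta(x,y)=\lk(x,y)$, so that $\lk(a_i,\alpha_j)=\delta_{ij}$. Take the Seifert matrix to be $V_{ij}=\lk(a_i,i^+_*a_j)$; the opposite convention only transposes $V$ and can be tracked the same way. Expanding $i^+_*a_j=\sum_i V_{ij}\alpha_i$, we see that in the bases $\{a_i\}$ and $\{\alpha_i\}$ the map $i^+_*$ is given by $V$. Similarly, $i^-_*a_j$ has coefficients $\lk(a_i,i^-_*a_j)=\lk(i^+_*a_i,a_j)=V_{ji}$, so $i^-_*$ is given by $V^T$. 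Hence $i^+_*-i^-_*$ is given by $A=V-V^T$, which is unimodular because $\det A=\pm1$. This matches the fact, recalled before the statement, that $i^+_*-i^-_*$ is an isomorphism.

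Next we set up the recursion. Write $w_k\in\Z^{2g}$ for the coordinate vector of $[L_{1^k2}]\in H_1(S^3\smallsetminus G)$, so $w_0=v_2$. Write $u_k$ for the vector of the unpushed curve $[L^0_{1^k2}]\in H_1(G)$, for $k\ge1$. The cut-open-surface argument before the statement, applied to $D^{k-1}(L,G)$ with $G$ kept fixed, gives $Au_k=w_{k-1}$, and the pushoff gives $w_k=Vu_k$. Therefore
\[
u_k=A^{-1}w_{k-1},\qquad w_k=VA^{-1}w_{k-1},
\]
so $w_{k-1}=(VA^{-1})^{k-1}v_2$ and $u_k=A^{-1}(VA^{-1})^{k-1}v_2$.

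Finally we compute the linking number. As noted before the statement, $\gamma^k(L,G)=\lk(L_{1^k2},L_3)=\lk(L^0_{1^k2},L_3)$. Pushing off $G$ does not change the linking number with $L_3$, since $L_3$ is disjoint from $G$. The right-hand side is $\theta(u_k,[L_3])$, and in dual bases this equals $u_k^Tv_3=\bigl(A^{-1}(VA^{-1})^{k-1}v_2\bigr)^Tv_3$, as claimed.

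The main obstacle is sign and orientation bookkeeping, and three points need checking. First, that the orientation convention $n_{G_1}\times n_{G_2}$ for $L_{12}$ gives exactly $(i^+_*-i^-_*)[L^0_{12}]=+[L_2]$ rather than $-[L_2]$; I would verify this against the worked example $\gamma(L,G_1)=(1,-1,1,\dots)$. Second, that the pushoff in the positive normal direction of $G_1$ corresponds to $i^+$. Third, that the linking-form convention in $V$ is consistent with $\lk(a_i,\alpha_j)=\delta_{ij}$. If the signs come out reversed, the fix is a transpose or a sign choice in the convention for $V$. I would check this on the genus-one example of Figure~\ref{fig:example1}.
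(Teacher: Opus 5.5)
Your argument is the paper's proof essentially verbatim: $i^+_*\leftrightarrow V$, $i^-_*\leftrightarrow V^T$, the recursion $u_k=A^{-1}w_{k-1}$, $w_k=Vu_k$, and pairing against $v_3$ in the linking-dual basis. The paper's proof rests on the same relation $(i^+_*-i^-_*)[L^0_{12}]=[L_2]$ that you deferred. That deferred sign check is the gap, and it does not come out the way you need.

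Work locally with $G_1=\{z=0\}$, $n_{G_1}=e_z$, and $G_2=\{x=0\}$, $n_{G_2}=e_x$. Then $G_2$ is oriented by $(e_y,e_z)$, and $L^0_{12}$ points along $n_{G_1}\times n_{G_2}=e_y$. The half $\{x=0,\ z\ge 0\}$ of $G_2$ lies on the positive side of $G_1$. Its outward normal along the cut is $-e_z$, and $(-e_z,e_y)$ is a positive basis of $G_2$, so it induces $+e_y$ on the cut. Hence $G_2$ cut along $G_1$ has boundary $L_2+i^+(L^0_{12})-i^-(L^0_{12})$, i.e.\ $(i^+_*-i^-_*)[L^0_{12}]=-[L_2]$. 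Equivalently, for $x\in H_1(G_1)$ one has $\lk(x,L_2)=x\cdot_{G_1}L^0_{12}$, while $\lk(x,y^+)-\lk(x,y^-)=-\,x\cdot_{G_1}y$. So the recursion is really $Au_k=-w_{k-1}$, and your computation yields $\gamma^k(L,G)=(-1)^k\bigl(A^{-1}(VA^{-1})^{k-1}v_2\bigr)^Tv_3$. This is the displayed formula with $A$ replaced by $V^T-V$.

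Your proposed remedy cannot absorb this. Replacing $V$ by $-V$ changes the right-hand side by an overall sign. Replacing $V$ by $V^T$ turns $A^{-1}(VA^{-1})^{k-1}$ into $-A^{-1}(I-VA^{-1})^{k-1}$. Neither produces $(-1)^k$.

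A consistency check against Proposition~\ref{thm: h is gamma} confirms the sign. Summing the displayed formula gives $\sum_{k\ge1}\gamma^k(t-1)^k=(t-1)\,v_3^T\bigl((2-t)V-V^T\bigr)^{-1}v_2$. Its denominator is $\Delta_{L_1}(2-t)$ rather than $\Delta_{L_1}(t)$, where $\Delta_{L_1}(s)=\det(sV-V^T)$. In the paper's worked example, $V=\begin{pmatrix}0&2\\1&0\end{pmatrix}$, the sequence $(1,1,2,4,\dots)$ sums to $(2-t)/(3-2t)$. That is impossible, because $\Delta_{L_1}=(t-2)(2t-1)$ must clear the denominator of $h$. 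With the factor $(-1)^k$ one instead gets $-(t-1)v_3^T(tV-V^T)^{-1}v_2$, which in the example is $t/(2t-1)$, as it should be.

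So under the stated orientation convention $n_{G_1}\times n_{G_2}$ for $L_{12}$, both your proof and the paper's need the factor $(-1)^k$. The formula holds as displayed only if $L_{12}$ is oriented by $n_{G_2}\times n_{G_1}$, which would move the sign into Proposition~\ref{thm: h is gamma} instead.
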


As an example, we compute all of the $\gamma$-invariants of the link in Figure~\ref{fig: gamma 3 example}. In the notation of Proposition~\ref{thmbody:seifert-matrix},
\[
v_2=\begin{pmatrix}1\\0\end{pmatrix},\qquad
v_3=\begin{pmatrix}0\\1\end{pmatrix},\qquad
V=\begin{pmatrix}0&2\\1&0\end{pmatrix}.
\]
Thus,
\[
A=\begin{pmatrix}0&1\\-1&0\end{pmatrix},\qquad
A^{-1}=\begin{pmatrix}0&-1\\1&0\end{pmatrix},\qquad
VA^{-1}=\begin{pmatrix}2&0\\0&-1\end{pmatrix}.
\]
Finally, by Proposition~\ref{thmbody:seifert-matrix},
\[
\gamma^k(L,G_1)
=
\left(
\begin{pmatrix}0&-1\\1&0\end{pmatrix}
\begin{pmatrix}2&0\\0&-1\end{pmatrix}^{k-1}
\begin{pmatrix}1\\0\end{pmatrix}
\right)^T
\begin{pmatrix}0\\1\end{pmatrix}
=
2^{k-1}.
\]
Hence $\gamma(L,G_1)=(1,1,2,4,8,\dots)$.

\begin{figure}
     \centering
         \begin{tikzpicture}
         \node at (-.3,1) {$L_1$};
         \node at (.6,.5) {$a_1$};
         \node at (2.6,.55) {$a_2$};
         \node[blue] at (.8,2.88) {$L_2$};
         \node[red] at (2.8,2.88) {$L_3$};
         \node[above right] at (0,0){\includegraphics[width=.2\textwidth]{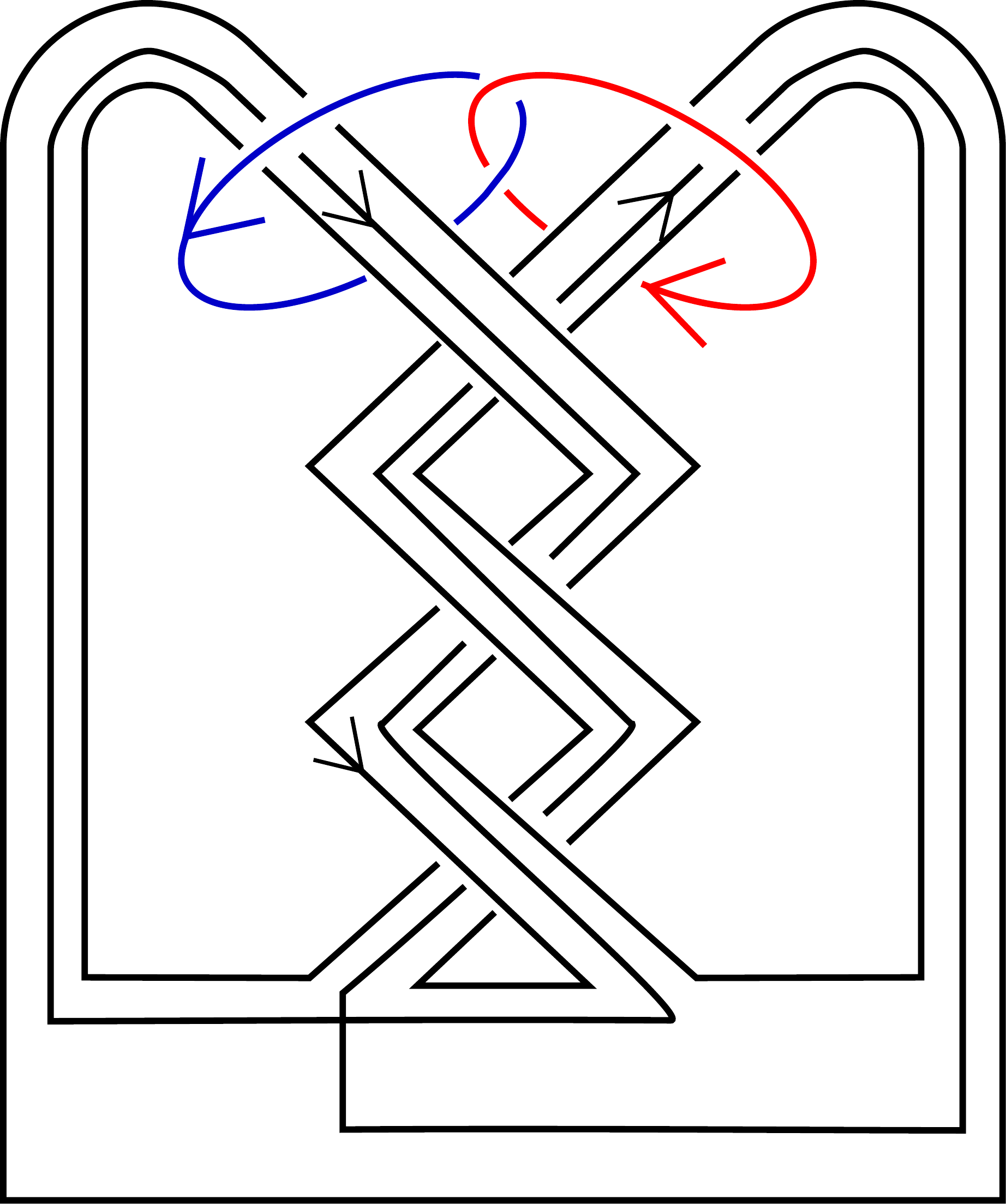}};
        
         \end{tikzpicture}
        \caption{A link $L$ whose distinguished component $L_1$ bounds the Seifert surface $G$, together with a symplectic basis $\{a_1,a_2\}$ for $H_1(G)$.}
        \label{fig: gamma 3 example}
\end{figure}

\begin{proof}[Proof of Proposition~\ref{thmbody:seifert-matrix}]
Let $\{a_1,\dots,a_{2g}\}$ be a basis for $H_1(G)$ and let $\{\alpha_1,\dots,\alpha_{2g}\}$ be the linking dual basis for $H_1(S^3\smallsetminus G)$. With respect to these bases, the map
\[
i^+_*:H_1(G)\to H_1(S^3\smallsetminus G)
\]
is represented by the Seifert matrix $V$, and $i^-_*$ is represented by $V^T$.

Thus,
\[
[L_{12}^0]=(i^+_*-i^-_*)^{-1}[L_2],
\]
or, in coordinates with respect to $\{a_i\}$ and $\{\alpha_i\}$,
\[
[L_{12}^0]=(V-V^T)^{-1}v_2.
\]
Next,
\[
[L_{12}]=i^+_*[L_{12}^0]=V(V-V^T)^{-1}v_2.
\]
Iterating, we obtain
\[
[L_{1^k2}]=\left(V(V-V^T)^{-1}\right)^k v_2
\qquad\text{and}\qquad
[L_{1^k2}^0]=(V-V^T)^{-1}\left(V(V-V^T)^{-1}\right)^{k-1}v_2.
\]
Here, as for $L_{12}^0$, the curve $L_{1^k2}^0$ denotes the intersection of $G$ with a Seifert surface for $L_{1^{k-1}2}$.

Now let $x\in H_1(G)$ and write $x=\sum_{i=1}^{2g} x_i a_i$, so that its coordinate vector is $\vec x\in \Z^{2g}$. Write $[L_3]=\sum_{i=1}^{2g} (v_3)_i\,\alpha_i$. By the definition of the linking dual basis,
\[
\lk(x,L_3)=\sum_{i=1}^{2g} x_i(v_3)_i = \vec x^{\,T}v_3.
\]
Putting these observations together, we have
\[
\gamma^k(L,G)=\lk(L_{1^k2}^0,L_3)
=
\left((V-V^T)^{-1}\left(V(V-V^T)^{-1}\right)^{k-1}v_2\right)^T v_3,
\]
as claimed.
\end{proof}

\section{Swapping components and recovering Cochran's $\beta$-invariant}

The reader will have noticed that the operator $D(L_1, L_2, L_3)=(L_1, L_{12}, L_3)$ is only one of the two possible choices of derivatives we might have considered (for the moment, we suppress the choice of Seifert surface). One could equally well define a derivative in the third component, producing $(L_1, L_2, L_{13})$. This would lead to mixed iterated derivatives
\[
(L_1, L_2, L_3)\longmapsto (L_1, L_{1^{p}2}, L_{1^{l}3}),
\]
which one might hope would carry more information. As we will prove in this section, the $\gamma$-invariants coming from these mixed derivatives carry no new information.    As a consequence, we explain how to recover Cochran's $\beta$-invariants in terms of $\gamma$-invariants.  

Let $\swap$ denote the operator on $3$-component links that swaps the second and third components:
\[
\swap(L_1, L_2, L_3)=(L_1, L_3, L_2).
\]
The mixed derivative $(L_1, L_{1^{p}2}, L_{1^{l}3})$ from the preceding paragraph can now be described as
\[
\swap D^{l}  \swap  D^{p}(L_1, L_2, L_3).
\]
The following lemma shows how to compute all of these mixed $\gamma$-invariants in terms of the $\gamma$-invariants we have already defined. For the sake of precision, we now include the choice of Seifert surface.

\begin{lemma}\label{lem:swap}
Let $L=(L_1, L_2, L_3)$ be a link with a distinguished component. Let $G_1$ be a Seifert surface for $L_1$. Then: %\footnote{\chris{A thought: would this be easier using the philosophy of Theorem~\ref{thmbody:seifert-matrix} This might be worth checking.  }}
\begin{enumerate}
\item \label{item: DSDS vs SDSD} $\displaystyle
\gamma^0 \bigl( \Swap  D \Swap  D(L,G_1)\bigr)
=
\gamma^0  \bigl(D \Swap  D \Swap(L,G_1)\bigr)+\gamma^1(L,G_1)$.

\item \label{item: deriv commute} For any $p \geq 1$, $D^p D \Swap D \Swap(L,G_1)$ and $D \Swap D \Swap D^p(L,G_1)$ are weakly cobordant.

\item \label{item: gamma0 and S} $\displaystyle
\gamma^0 \bigl(\Swap(L,G_1)\bigr)=\gamma^0\bigl(L,G_1\bigr)$.

\item \label{item: gamma^k Swap} For any $k\ge 1$, $$
\gamma^k\bigl(\Swap(L,G_1)\bigr)
=
(-1)^k\sum_{j=1}^k {k-1 \choose j-1}\gamma^j\bigl(L,G_1\bigr).$$

\item \label{item: Mixed gammas} For any $p\ge 0$ and $l\ge 1$, $$
\gamma^0\bigl(\Swap D^l \Swap D^p(L,G_1)\bigr)
=
(-1)^l\sum_{j=1}^l {l-1 \choose j-1}\gamma^{p+j}\bigl(L,G_1\bigr).$$
\end{enumerate}
\end{lemma}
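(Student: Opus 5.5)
The plan is to reduce everything to the homological bookkeeping behind Proposition~\ref{thmbody:seifert-matrix}. The one exception is item~\pref{item: deriv commute}, which I would prove geometrically.

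\textbf{Notation.} Fix $G=G_1$, bases $\{a_i\}$ for $H_1(G)$ and $\{\alpha_i\}$ for $H_1(S^3\smallsetminus G)$, and set $A=V-V^T$. Put $M=VA^{-1}$ and $N=A^{-1}V$, so that $A^{-1}M^{k}=N^{k}A^{-1}$. If a component has class $v$ in $H_1(S^3\smallsetminus G)$, then its derivative meets $G$ in a curve with class $c=A^{-1}v\in H_1(G)$, and the pushed-off derivative has class $Mv$.

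\textbf{Items \pref{item: gamma0 and S} and \pref{item: deriv commute}.} Item~\pref{item: gamma0 and S} is just the symmetry of the linking number. For item~\pref{item: deriv commute}, unwind the operators:
\[
D\Swap D\Swap(L)=(L_1,L_{12},L_{13}),\qquad \Swap D\Swap D(L)=(L_1,L_{12},L_{13}).
\]
Each derivative uses a Seifert surface for $L_2$ (respectively $L_3$) that may be chosen disjoint from the other component. Both $D^pD\Swap D\Swap(L,G_1)$ and $D\Swap D\Swap D^p(L,G_1)$ then have the form $(L_1,L_{1^{p+1}2},L_{13})$. The only difference is the order in which the push-offs and surfaces are chosen. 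By Lemma~\ref{lem:independentofG2}, changing these auxiliary choices does not change the weak cobordism class. The one thing to check is that the $L_3$-derivative can be taken after the $L_2$-derivatives with a Seifert surface disjoint from them; this should be achievable by tubing, using $\lk(L_{1^{p+1}2},L_1)=0$. The same unwinding shows that in item~\pref{item: DSDS vs SDSD} the two links differ only in which of the two curves on $G$ is pushed off first.

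\textbf{Item \pref{item: DSDS vs SDSD}.} Let $x=L_{12}^0$ and $y=L_{13}^0$ be the curves on $G$. Then the two sides of item~\pref{item: DSDS vs SDSD} are $\lk(x^+,y^{++})$ and $\lk(x^{++},y^{+})$ in the appropriate order, and their difference reduces to a difference of Seifert pairings of the form $\lk(x^+,y)-\lk(x,y^+)$. This difference equals $x\cdot y$, the intersection number on $G$, which in coordinates is $\pm c_2^T A c_3$. Since $c_3=A^{-1}v_3$, this becomes $\pm c_2^T v_3$, where $c_2=A^{-1}v_2$. That quantity is $\gamma^1(L,G_1)$ by Proposition~\ref{thmbody:seifert-matrix}.

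\textbf{Items \pref{item: Mixed gammas} and \pref{item: gamma^k Swap}.} Item~\pref{item: gamma^k Swap} is the special case $p=0$ of item~\pref{item: Mixed gammas}, since $\gamma^k(\Swap L)=\lk(L_{1^k3},L_2)=\gamma^0(\Swap D^k\Swap L)$. So it suffices to prove item~\pref{item: Mixed gammas}. The left side is $\lk(L_{1^p2},L_{1^l3})$. The curve $L_{1^l3}$ is the positive push-off of a curve on $G$ with class $c_3=A^{-1}M^{l-1}v_3$, and $L_{1^p2}$ is disjoint from $G$ with class $M^pv_2$. By duality the left side is
\[
\bigl(A^{-1}M^{l-1}v_3\bigr)^T M^p v_2 = v_3^T (M^{l-1})^T A^{-T} M^p v_2 .
\]
Now use $A^T=-A$, $V^T=V-A$ and $A^{-1}V^T=N-I$ to rewrite this as $\pm\, v_3^T (N-I)^{l-1}N^pA^{-1}v_2$. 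Expanding $(N-I)^{l-1}$ binomially and comparing with $\gamma^{p+j}(L,G)=v_3^TN^{p+j-1}A^{-1}v_2$ should give the stated binomial sum.

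\textbf{Main obstacle.} I expect the difficulty to be sign and transpose bookkeeping. This covers the orientation convention $n_{G_1}\times n_{G_2}$, whether $i^+$ corresponds to $V$ or $V^T$, and the sign of $x\cdot y$. In particular, a naive expansion of $(N-I)^{l-1}$ produces alternating signs inside the sum, while item~\pref{item: Mixed gammas} has none. To resolve this I will first pin down the conventions against the worked example $\gamma(L,G_1)=(1,1,2,4,\dots)$, and against the first example, where $L_{12}$ is the reverse of $L_2$. If the alternating signs persist, the correct factor is presumably $-(N-I)$ or $(N+I)$ rather than $N-I$, and I will identify which before expanding.
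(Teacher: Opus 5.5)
Your route differs from the paper's. The paper proves items (1) and (2) geometrically by comparing push-off heights in a collar of $G_1$, gets the base case from the sign of $(n_{G_1}\times n_{G_2})\cdot n_{G_3}$, derives (4) by induction, and obtains (5) from (4). Computing (5) directly in Seifert-matrix coordinates and specializing to $p=0$ is a reasonable alternative. As written, however, it breaks exactly at the sign issue you flag, and your proposed remedy does not repair it.

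If you use Proposition~\ref{thmbody:seifert-matrix} as printed ($c=A^{-1}v$, with the derivative acting by $M=VA^{-1}$), your computation yields
\[
\gamma^0\bigl(\Swap D^l\Swap D^p(L,G_1)\bigr)=-v_3^T(I-N)^{l-1}N^pA^{-1}v_2=\sum_{j=1}^{l}(-1)^j\binom{l-1}{j-1}\gamma^{p+j}(L,G_1),
\]
which is not item (5) once $l\ge 2$. This is not a slip in your algebra: the printed formula and item (4) cannot both hold. In the worked example the printed formula gives $\gamma^k(L,G_1)=2^{k-1}$ and $\gamma^k(\Swap(L,G_1))=(-1)^k$ for $k\ge 1$. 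Item (4), however, demands $\gamma^2(\Swap(L,G_1))=\gamma^1+\gamma^2=3$. So calibrating against that example is circular. Nor can you trade $N-I$ for $-(N-I)$ or $N+I$, because $A^{-1}V^T=N-I$ is an identity, not a convention.

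The missing idea is to derive the sign of the basic step from the orientation $n_{G_1}\times n_{G_2}$ rather than importing it. Take the local model $G_1=\{z=0\}$, $G_2=\{y=0\}$ with normals $e_z$, $e_y$. Cut $G_2$ along $G_1$; the boundary curve lying on the positive side of $G_1$ is oriented like $L^0_{12}$. Hence $[L_2]=-(i^+_*-i^-_*)[L^0_{12}]$ in $H_1(S^3\smallsetminus G_1)$. Equivalently, for $x\subset G_1$ one has $\lk(x,L_2)=x\cdot_{G_1}L^0_{12}$, whereas $\lk(x,c^+)-\lk(x,c^-)=-x\cdot_{G_1}c$.

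Consequently $c=-A^{-1}v$, each derivative acts by $-M$, and $\gamma^m(L,G_1)=-v_3^T(-N)^{m-1}A^{-1}v_2$ for $m\ge 1$. The left side of (5) becomes $v_3^T(N-I)^{l-1}(-N)^pA^{-1}v_2$. Since $N-I=-\bigl(I+(-N)\bigr)$, the binomial expansion has no alternating signs and gives exactly $(-1)^l\sum_{j=1}^{l}\binom{l-1}{j-1}\gamma^{p+j}(L,G_1)$. For item (1), skip coordinates altogether: $x\cdot_{G_1}y=x\cdot G_3=\lk(L_{12},L_3)$.

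There are two smaller points.
\begin{enumerate}
\item The duality step in (5) uses that $L_{1^l3}$ is pushed closer to $G_1$ than $L_{1^p2}$. The order $\Swap D^l\Swap D^p$ guarantees this, but you should say so.
\item For item (2), Lemma~\ref{lem:independentofG2} does not address a change in the order of push-offs. You need the paper's observation: with a common Seifert surface for $L_{12}^+$, the collar annulus from $L_{13}$ to $L_{13}^{++}$ misses $G_1$ and the later, closer derivative, so the two links are isotopic. Then induct on $p$. Your tubing remark about $\lk(L_{1^{p+1}2},L_1)=0$ plays no role.
\end{enumerate}
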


%\textcolor{red}{A similar result appears in \cite[Corollary~4.2]{Tsukamoto-Yasuhara:2007}, with a difference being that the derivatives defined in that paper are not all pushed off of a Seifert surface for $L_1$ in the positive normal direction.}

\begin{proof}
Let $G_1$, $G_2$, and $G_3$ be Seifert surfaces for $L_1$, $L_2$, and $L_3$.
%, chosen so that $G_1\cap G_2$ and $G_1\cap G_3$ are each simple closed curves.
Then $\Swap D\Swap D(L,G_1)$ has first component $L_1$, second component obtained by pushing $G_1\cap G_2$ off of $G_1$, and third component obtained by pushing $G_1\cap G_3$ off of $G_1$. As usual, throughout this process we ensure that the components of the link remain disjoint.

Motivated by this description, we denote this link by $(L_1, L_{12}^{+}, L_{13})$. Similarly, $D\Swap D\Swap(L,G_1)=(L_1, L_{12}^{+}, L_{13}^{++})$ is obtained by performing the same construction in the opposite order. Thus, $L_{13}^{++}\cup-L_{13}$ bounds an annulus that intersects $L_{12}$ once for each intersection point of $G_1\cap G_2$ with $G_1\cap G_3$ in $G_1$. This count agrees with the algebraic intersection number of $G_1\cap G_2$ with $G_3$, which in turn recovers $\lk(L_{12},L_3)=\gamma^1(L,G_1)$. Claim~\pref{item: DSDS vs SDSD} now follows by checking that the orientations in these intersection counts agree.
%\chris{This feels like a really tedious exercise.  Maybe we need to review the actual singular chain complex definition of the intersection form.}

Since the annulus cobounded by $L_{13}$ and $L_{13}^{++}$ lies in the complement of $G_1$, it follows that, as long as we use the same Seifert surface for $L_{12}^+$ in each instance, the links
\[
D(L_1, L_{12}^+, L_{13}, G_1)
\qquad\text{ and }\qquad
D(L_1, L_{12}^+, L_{13}^{++}, G_1)
\]
are isotopic. Since the choice of Seifert surface for the second component does not change the weak cobordism class by Lemma~\ref{lem:independentofG2}, we conclude that $D(L_1, L_{12}^+, L_{13}, G_1)$ and $D(L_1, L_{12}^+, L_{13}^{++}, G_1)$ are weakly cobordant. This proves Claim~\pref{item: deriv commute} when $p=1$. An easy induction completes the proof for any $p>1$.

Claim~\pref{item: gamma0 and S} follows from the symmetry of linking number:
\[
\gamma^0(\Swap(L,G_1))=\lk(L_3,L_2)=\lk(L_2,L_3)=\gamma^0(L,G_1).
\]

Claim~\pref{item: gamma^k Swap} is proved by induction. Recall that $G_2$ and $G_3$ are Seifert surfaces that intersect $G_1$ in simple closed curves (these curves may intersect each other). Let
\[
L_{12}=(G_1\cap G_2)^+
\qquad\text{and}\qquad
L_{13}=(G_1\cap G_3)^+
\]
be the resulting derivatives. Since linking number is given by the signed count of intersections between one component and a Seifert surface for the other, $\gamma^1(L,G_1)$ is the signed count of intersections between $G_3$ and $L_{12}$. An intersection point is positive if the tangent vector to $L_{12}$ has positive dot product with the positive normal vector to $G_3$ (and negative otherwise). Equivalently, the sign is determined by the sign of
\[
\bigl(n_{G_1}\times n_{G_2}\bigr)\cdot n_{G_3}.
\]
Similarly, $\gamma^1(\Swap(L,G_1))=\lk(L_2,L_{13})$ is given by the same intersection count, but with each sign reversed. Thus,
\[
\gamma^1(\Swap(L,G_1))=-\gamma^1(L,G_1),
\]
as predicted by Claim~\pref{item: gamma^k Swap} when $k=1$.

Now assume $k>1$ and argue by induction. Using $\Swap^2=\Id$ and the inductive hypothesis, we have
\begin{align*}
\gamma^k(\Swap(L,G_1))
&=\gamma^{k-1}\bigl(D\Swap(L,G_1)\bigr)
=\gamma^{k-1}\bigl(\Swap\Swap D\Swap(L,G_1)\bigr) \\
&=(-1)^{k-1}\sum_{j=1}^{k-1} {k-2 \choose j-1}\,
\gamma^j\bigl(\Swap D\Swap(L,G_1)\bigr).
\end{align*}
Next, Claim~\pref{item: deriv commute} implies
\[
\gamma^j(\Swap D\Swap(L,G_1))
=\gamma^0\bigl(D^{j-1}D\Swap D\Swap(L,G_1)\bigr)
=\gamma^0\bigl(D\Swap D\Swap D^{j-1}(L,G_1)\bigr).
\]
Applying Claim~\pref{item: DSDS vs SDSD} now gives
\begin{align*}
\gamma^k(\Swap(L,G_1))
&=(-1)^{k-1}\sum_{j=1}^{k-1} {k-2 \choose j-1}\,
\gamma^0\bigl(D\Swap D\Swap D^{j-1}(L,G_1)\bigr) \\
&=(-1)^{k-1}\sum_{j=1}^{k-1} {k-2 \choose j-1}\,
\left(\gamma^0\bigl(\Swap D\Swap D^{j}(L,G_1)\bigr)-\gamma^1\bigl(D^{j-1}(L,G_1)\bigr)\right).
\end{align*}
Using $\gamma^1(D^{j-1}(L,G_1))=\gamma^j(L,G_1)$ and
\[
\gamma^0\bigl(\Swap D\Swap D^j(L,G_1)\bigr)
=\gamma^0\bigl(D\Swap D^j(L,G_1)\bigr)
=\gamma^1\bigl(\Swap D^j(L,G_1)\bigr)
=-\gamma^1\bigl(D^j(L,G_1)\bigr)
=-\gamma^{j+1}(L,G_1),
\]
we obtain
\[
\gamma^k(\Swap(L,G_1))
=
(-1)^k\sum_{j=1}^{k-1} {k-2 \choose j-1}\,
\left(\gamma^{j+1}(L,G_1)+\gamma^j(L,G_1)\right).
\]
Reindexing the sum yields
\begin{align*}
\gamma^k(\Swap(L,G_1))
&=
(-1)^k\sum_{j=1}^{k}
\left({k-2 \choose j-2}+{k-2 \choose j-1}\right)\gamma^j(L,G_1) \\
&=
(-1)^k\sum_{j=1}^{k} {k-1 \choose j-1}\gamma^j(L,G_1).
\end{align*}

Finally, Claim~\pref{item: Mixed gammas} follows immediately from Claim~\pref{item: gamma^k Swap}:
\begin{align*}
\gamma^0\bigl(\Swap D^l\Swap D^p(L,G_1)\bigr)
&=\gamma^0\bigl(D^l\Swap D^p(L,G_1)\bigr)=\gamma^l\bigl(\Swap D^p(L,G_1)\bigr) \\
&=(-1)^{l}\sum_{j=1}^{l} {l-1 \choose j-1}\,
\gamma^j\bigl(D^p(L,G_1)\bigr) \\
&=(-1)^l\sum_{j=1}^{l} {l-1 \choose j-1}\gamma^{p+j}(L,G_1),
\end{align*}
which completes the proof.
\end{proof}

Our next goal is to compare our invariants with Cochran's $\beta$-invariants. We begin by recalling their definition, which is closely analogous to ours but applies to $2$-component links.

Let $L=(L_1,L_2)$ be a link with $\lk(L_1,L_2)=0$. Let $G_1$ and $G_2$ be Seifert surfaces for $L_1$ and $L_2$, respectively, intersecting transversely in a single simple closed curve $L_{12}$. Define
\[
\Delta(L_1,L_2)=(L_1,L_{12}).
\]
Unlike in our $3$-component setting, Cochran proves that the weak cobordism class of $(L_1,L_{12})$ is independent of the choice of Seifert surfaces and depends only on the weak cobordism class of $L$. The $k$-fold iterate is given by
\[
\Delta^k(L_1,L_2)=(L_1,L_{1^k2}).
\]
Let $L_{1^k2}^+$ be a push-off of $L_{1^k2}$ via Seifert surfaces. We then define
\[
\beta^k(L):=\lk\bigl(L_{1^k2},L_{1^k2}^+\bigr).
\]

\begin{reptheorem}{thm:beta-from-gamma}
Let $L=(L_1,L_2)$ be a $2$-component link with $\lk(L_1,L_2)=0$. Let $G$ be a Seifert surface for
$L_1$ disjoint from $L_2$, let $L_2^0$ denote the $0$-framed push-off of $L_2$, and let
$L^\ast=(L_1,L_2,L_2^0)$. Then
\[
\beta^k(L)=(-1)^k\sum_{j=1}^k \binom{k-1}{j-1}\gamma^{k+j}(L^\ast,G).
\]
\end{reptheorem}

% \begin{theorem}\label{thm:Compare to beta}
% Let $L=(L_1,L_2)$ be a $2$-component link with $\lk(L_1,L_2)=0$. Let $G$ be a Seifert surface for
% $L_1$ disjoint from $L_2$, let $L_2^0$ denote the $0$-framed push-off of $L_2$, and let
% $L^\ast=(L_1,L_2,L_2^0)$. Then
% \[
% \beta^k(L)=(-1)^k\sum_{j=1}^k \binom{k-1}{j-1}\gamma^{k+j}(L^\ast,G).
% \]
% \end{theorem}

\begin{proof}
Notice that
\[
(L_1, L_{1^k2}, L_{1^k2}^+, G)=\Swap D^k \Swap D^k(L^*,G).
\]
Thus, by Claim~\pref{item: Mixed gammas} of Lemma~\ref{lem:swap},
\begin{align*}
\beta^k(L)
&=\gamma^0\bigl(\Swap D^k \Swap D^k(L^*,G)\bigr) \\
&=(-1)^k\sum_{j=1}^{k}{k-1\choose j-1}\gamma^{k+j}(L^*,G),
\end{align*}
as claimed.
\end{proof}

\section{A variant of the Kojima--Yamasaki $\eta$-invariant,\\ and its relation to the $\gamma$-invariants}
\label{sect eta invt}

In this section and the next, we prove our main theorem, which we now recall.
\begin{reptheorem}{thm:main}
Let $L=(L_1,L_2,L_3)$ be a $3$-component link with a distinguished component. Then
\[
\gamma(L)=\bigl(\gamma^0(L),\gamma^1(L),\ldots\bigr)\in \Z^\infty
\]
is a link concordance invariant modulo the action of $T+\mathrm{Id}$. Moreover, we have
\[
\gamma^k(L)\equiv \bar{\mu}_L(1^k23)
\pmod{\gcd\left\{\, \bar{\mu}_L(1^i23)\mid i<k \,\right\}}.
\]
\end{reptheorem}
\noindent In fact, in this section, specifically in Corollary~\ref{corollary: well defined}, we prove the stronger statement that $\gamma(L)$ depends only on the weak cobordism class of $L$. The latter part of Theorem~\ref{thm:main}, connecting $\gamma(L)$ to Milnor invariants, will be proved in the next section, specifically in Theorem~\ref{thm: gamma as Milnor}.

Cochran's argument in \cite{C3} that the $\beta$-invariants are independent of the choice of Seifert surface does not apply in our setting. Indeed, as seen in Figures~\ref{fig: Example1Deriv} and \ref{fig: Example1DiffSurfDeriv}, the $\gamma$-invariants depend on the choice of Seifert surface. In this section, we take inspiration from \cite[Theorem~7.1]{C3}, where Cochran shows that the $\beta$-invariants recover the coefficients of the $\eta$-invariant defined by Kojima and Yamasaki in \cite{KojYam79}. We define a variant of the $\eta$-invariant for $3$-component links, describe its indeterminacy, and show that the $\gamma$-invariants recover its coefficients.

We begin by defining this new invariant. Let $L=(L_1,L_2,L_3)$ be a $3$-component link with a distinguished component. Let $\widetilde{L_2}$ and $\widetilde{L_3}$  be lifts of $L_2$ and $L_3$ to the infinite cyclic cover, $\widetilde{S^3\smallsetminus L_1}$. Let $t\colon \widetilde{S^3\smallsetminus L_1}\to \widetilde{S^3\smallsetminus L_1}$ be the deck transformation corresponding to the meridian of $L_1$ and $t_*$ be the induced map on the Alexander module, $A(L_1) = H_1(\widetilde{S^3\smallsetminus L_1})$. Since $A(L_1)$ is a $\Z[t,t^{-1}]$-torsion module, there is a polynomial $\Delta(t)$ such that the $1$-cycle $\Delta(t) \widetilde{L_2}$ bounds a $2$-chain $X\in C_2(\widetilde{S^3\smallsetminus L_1})$. Kojima and Yamasaki define
$$
\eta(L_1, L_2)=\eta(L)=\dfrac{1}{\Delta(t)}\Sum_{k\in \Z} \bigl(X, t^k\widetilde{L_2^+}\bigr)t^k \in \Q(t).
$$
Here $(X, t^k\widetilde{L_2^+})$ indicates the algebraic intersection number between $X$ and the translate $t^k\widetilde{L_2^+}$ of the lift of the $0$-framed push-off of $L_2$. This can be viewed as recording the self-linking of a lift of $L_2$ in the infinite cyclic cover of $S^3\smallsetminus L_1$. We define a {rational function} by instead recording the linking between a lift of $L_2$ and a lift of $L_3$. Define
$$
\noteta(L_1,\widetilde{L_2},\widetilde{L_3})=\dfrac{1}{\Delta(t)}\Sum_{k\in \Z} \bigl(X, t^k\widetilde{L_3}\bigr)t^k\in \Q(t),
$$
where $X\in C_2(\widetilde{S^3\smallsetminus L_1})$ is a $2$-chain with $\partial X=\Delta(t) \widetilde{L_2}$, as above. {This rational function also appears in \cite{Tsukamoto-Yasuhara:2007} under the notation $\operatorname{lk}(\widetilde{L_2},\widetilde{L_3})$, where the lifts are specified by a choice of a Seifert surface for $L_1$.}

According to \cite[Theorem~2]{KojYam79}, \(\eta(L_1,L_2)\in \Q(t)\) is independent of the choice of lift of \(L_2\) and is, moreover, a concordance invariant. In contrast, \(\noteta(L_1,\widetilde{L_2},\widetilde{L_3})\) does depend on the choice of lifts, and hence we include the lifts in the notation.  We will show that changing the lifts multiplies \(\noteta(L_1,\widetilde{L_2},\widetilde{L_3})\) by a power of \(t\). The following proposition shows that, modulo this ambiguity, \(\noteta(L_1,\widetilde{L_2},\widetilde{L_3})\) defines a weak cobordism invariant. Recall that \(\doteq\) is the equivalence relation on \(\Q(t)\) generated by
\[
p(t)\doteq t p(t).
\]

\begin{definition}
Let $L=(L_1,L_2,L_3)$ be a $3$-component link with a distinguished component. Choose lifts
\(\widetilde{L_2}\) and \(\widetilde{L_3}\) of \(L_2\) and \(L_3\) to the infinite cyclic cover
\(\widetilde{S^3\smallsetminus L_1}\), and set
\[
h(L):=\big[\noteta(L_1,\widetilde{L_2},\widetilde{L_3})\big]\in \Q(t)/\doteq.
\]
\end{definition}

The following gives a stronger statement than Theorem~\ref{thm: gamma as h}, which only states that $h(L)$ is an invariant of the concordance class of $L$. The fact that the Taylor coefficients of $h(L)$ at $t=1$ recover the $\gamma$-invariants of $L$ will be proved in Proposition~\ref{thm: h is gamma}, combined with the following proposition, which should be compared with Proposition~1 and Theorem~2 in \cite{KojYam79}.

\begin{proposition}\label{prop h is invariant}
Let $L=(L_1,L_2,L_3)$ be a $3$-component link with a distinguished component.
\begin{enumerate}
\item \label{indep of X and Delta}
The quantity $\noteta(L_1,\widetilde{L_2},\widetilde{L_3})$ is independent of the choice of $X$ and $\Delta(t)$.

\item \label{choice of lifts}
For any integers $n_2$ and $n_3$,
\[
\noteta(L_1,t^{n_2}\widetilde{L_2},t^{n_3}\widetilde{L_3})
=
t^{n_2-n_3}\noteta(L_1,\widetilde{L_2},\widetilde{L_3})\in \Q(t).
\]

\item \label{item: weak cob}
If $(A,Y_2,Y_3)$ is a weak cobordism from $L$ to $L'$, then each lift $\widetilde{Y_i}$ in
$\widetilde{S^3\times[0,1]\smallsetminus A}$ has boundary
\[
\partial \widetilde{Y_i}=\widetilde{L_i}-t^{n_i}\widetilde{L_i'}
\]
for some integer $n_i$. Moreover,
\[
\noteta(L_1,\widetilde{L_2},\widetilde{L_3})
=
\noteta(L_1',t^{n_2}\widetilde{L_2'},t^{n_3}\widetilde{L_3'}) \in \Q(t).
\]
\end{enumerate}
In particular, the quantity $h(L)$ is an invariant of the weak cobordism class of $L$.
\end{proposition}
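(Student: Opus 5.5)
We first set up an equivariant intersection pairing. Write $\widetilde E=\widetilde{S^3\smallsetminus L_1}$. For a $2$-chain $X$ and a $1$-cycle $c$ in general position, set
\[
\langle X,c\rangle:=\sum_{k\in\Z}(X,t^k c)\,t^k\in\Z[t,t^{-1}].
\]
This sum is finite because $X$ and $c$ are compact. Since $t$ acts by orientation-preserving deck transformations, we have $(t^aX,t^bc)=(X,t^{b-a}c)$. This gives the sesquilinearity rules $\langle t^aX,c\rangle=t^{a}\langle X,c\rangle$ and $\langle X,t^bc\rangle=t^{-b}\langle X,c\rangle$. The exact signs depend on the convention, and we fix them to agree with the definition of $\noteta$. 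With these rules, $\noteta(L_1,\widetilde{L_2},\widetilde{L_3})=\Delta^{-1}\langle X,\widetilde{L_3}\rangle$.

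\emph{Part~\pref{indep of X and Delta}.} Suppose $\partial X=\Delta\widetilde{L_2}$ and $\partial X'=\Delta'\widetilde{L_2}$. Then $\Delta' X-\Delta X'$ is a $2$-cycle in $\widetilde E$. Now $H_2(\widetilde E)=0$, because $\widetilde E$ is the infinite cyclic cover of a knot exterior, obtained by gluing copies of $S^3\smallsetminus G$ along $G$. A Mayer--Vietoris argument, using $H_2(S^3\smallsetminus G)=0$ and the injectivity of $i^+_*-i^-_*$, gives $H_2=0$. Hence $\Delta'X-\Delta X'=\partial W$ for some $3$-chain $W$. Since $\widetilde{L_3}$ is a cycle disjoint from $\widetilde{L_2}$, we get $\langle \partial W,\widetilde{L_3}\rangle=\pm\langle W,\partial\widetilde{L_3}\rangle=0$. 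Therefore $\Delta'\langle X,\widetilde{L_3}\rangle=\Delta\langle X',\widetilde{L_3}\rangle$, which gives equality in $\Q(t)$. The one subtle point is that $\Delta(t)$ acts on chains via the deck group, and the linearity rules let us pull it out of the pairing.

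\emph{Part~\pref{choice of lifts}.} If $\partial X=\Delta\widetilde{L_2}$, then $\partial(t^{n_2}X)=\Delta\, t^{n_2}\widetilde{L_2}$. Applying the two sesquilinearity rules gives the factor $t^{n_2}t^{-n_3}$.

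\emph{Part~\pref{item: weak cob}.} This is the heart of the argument. Let $W=S^3\times[0,1]\smallsetminus A$, whose $H_1$ is $\Z$ generated by a meridian. Its infinite cyclic cover $\widetilde W$ restricts over the two ends to $\widetilde E$ and $\widetilde{E'}$. Condition~\pref{item: weak cob - lifting} says exactly that each $Y_i$ lifts to $\widetilde W$. A lift $\widetilde{Y_i}$ has boundary equal to a lift of $L_i$ minus a lift of $L_i'$, which after renaming is $\widetilde{L_i}-t^{n_i}\widetilde{L_i'}$.

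Now take $X\subset\widetilde E\times\{0\}$ with $\partial X=\Delta\widetilde{L_2}$, and $X'\subset\widetilde{E'}\times\{1\}$ with $\partial X'=\Delta'\,t^{n_2}\widetilde{L_2'}$. We may use $\Delta\Delta'$ for both, by part~\pref{indep of X and Delta}. Then
\[
Z=\Delta' X-\Delta\Delta'\widetilde{Y_2}-\Delta X'
\]
is a relative $2$-cycle whose boundary vanishes, so $Z$ is a closed $2$-cycle in $\widetilde W$. Similarly, $\widetilde{Y_3}$ is a $2$-chain with boundary $\widetilde{L_3}-t^{n_3}\widetilde{L_3'}$ on the ends. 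Push $X,X'$ slightly into the interior and compute the equivariant intersection of $Z$ with $\widetilde{Y_3}$, which is a relative class in $(\widetilde W,\partial)$. The intersections on the collars recover $\Delta'\langle X,\widetilde{L_3}\rangle-\Delta\langle X',t^{n_3}\widetilde{L_3'}\rangle$. Because $\widetilde{Y_2}\cap\widetilde{Y_3}=\emptyset$, the $\widetilde{Y_2}$ term contributes nothing.

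It remains to show this total intersection is zero, and this is the main obstacle. It holds if $Z$ is null-homologous in $\widetilde W$ rationally, with torsion in the $t$-direction allowed, since we may multiply by a further polynomial. So the step to verify is that $H_2(\widetilde W;\Q)$ is $\Q[t^{\pm1}]$-torsion, or at least pairs trivially with relative classes. I would argue this with Milnor's exact sequence $H_2(\widetilde W)\xrightarrow{t-1}H_2(\widetilde W)\to H_2(W)$ and the computation $H_*(W;\Q)\cong H_*(S^1;\Q)$ from Alexander duality. Together these show $t-1$ acts invertibly on $H_2(\widetilde W;\Q)$ (finitely generated over $\Q[t^{\pm1}]$), so that module is torsion. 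If $pZ=\partial V$, then the intersection equals $\pm\langle V,\partial\widetilde{Y_3}\rangle$, which vanishes once the ends are handled, as in part~\pref{indep of X and Delta}. Dividing by $\Delta\Delta' p$ gives the claimed equality, and invariance of $h(L)$ under weak cobordism follows by combining this with part~\pref{choice of lifts}.
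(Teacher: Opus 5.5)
Your proposal is correct. For parts (2) and (3) it is the paper's argument. Part (2) is the same reindexing. Part (3) uses the same $2$-cycle built from $X$, $\widetilde{Y_2}$ and $X'$; your minus sign on the $\widetilde{Y_2}$ term is the one that actually makes it a cycle when $\partial\widetilde{Y_2}=\widetilde{L_2}-t^{n_2}\widetilde{L_2'}$. That cycle becomes a boundary after multiplying by some $q(t)$, because $H_2$ of the infinite cyclic cover of $S^3\times[0,1]\smallsetminus A$ is $\Z[t^{\pm1}]$-torsion, and it is then intersected with translates of $\widetilde{Y_3}$.

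The real difference is part (1). The paper does not prove it separately: it states that the four-dimensional computation checks ``all at once'' weak cobordism invariance and independence of $X$ and $\Delta(t)$. That is, part (1) is implicitly the case $L'=L$, with the product cobordism $(L_1\times I,L_2\times I,L_3\times I)$ and two choices $(X,\Delta)$, $(X',\Delta')$. You instead work directly in the three-dimensional cover, using $H_2(\widetilde{S^3\smallsetminus L_1})=0$. The injectivity your Mayer--Vietoris step needs is that of $tV-V^T$ over $\Z[t^{\pm1}]$, which follows from $\det(V-V^T)=\pm1$. Your route is more elementary; the paper's is more economical, since the torsion input is needed for (3) anyway.

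You also justify that torsion claim, which the paper only asserts. Your finite-generation step uses the compact exterior of $A$. That is fine in the smooth setting of this proposition, but it is why Remark~\ref{rmk: what if I-equiv} instead invokes Kojima--Yamasaki's Lemma~5 when $A$ is only continuously embedded.

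Finally, make ``once the ends are handled'' precise. Choose the $3$-chain $V$ with $\partial V=pZ$ in the interior of $\widetilde W$; this is possible because the inclusion of the interior is a homotopy equivalence. Then $V$ misses $\partial\widetilde{Y_3}$, and the intersection vanishes.
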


% \begin{corollary}\label{cor h is invariant}
% The quantity \(h(L)\)  is an invariant of the weak cobordism class of \(L\).
% \end{corollary}

% \begin{proposition}\label{prop h is invariant}
% Let $L=(L_1,L_2,L_3)$ be a $3$-component link and $\widetilde{L_2}$ and $\widetilde{L_3}$  be lifts of $L_2$ and $L_3$ to the infinite cyclic cover, $\widetilde{S^3\smallsetminus L_1}$. The class of $\noteta(L_1,\widetilde{L_2},\widetilde{L_3})$ in $\sfrac{\Q(t)}{\doteq}$ depends only on the weak cobordism class of $L$. Furthermore, $\noteta(L_1, t^{i}\widetilde{L_2}, t^{j}\widetilde{L_3}) = t^{i-j}\noteta(L_1, \widetilde{L_2}, \widetilde{L_3})$.
% \end{proposition}

% \JP{added the definition and changed the statement of prop 4.2.}

\begin{proof}
Let $L=(L_1,L_2,L_3)$ be a $3$-component link, and let $\widetilde{L_2}$, $\widetilde{L_3}$, $\Delta(t)$, and $X$ be as in the definition of $\noteta$. We first prove \pref{choice of lifts}.  We have $\bdry(t^{n_2}X)=\Delta(t)\,t^{n_2}\widetilde{L_2}$. Also, we have that
\[
\bigl(t^{n_2} X, t^k(t^{n_3}\widetilde{L_3})\bigr)
=
\bigl(X, t^{k+n_3-n_2}\widetilde{L_3}\bigr).
\]
By reindexing the sum, we obtain
\begin{equation*}\label{eqn: same noteta}
\begin{array}{rll}
h(L_1, t^{n_2}\widetilde{L_2},t^{n_3}\widetilde{L_3})&=\frac{1}{\Delta(t)}\sum_{k\in \mathbb{Z}} \bigl(t^{n_2}X, t^k(t^{n_3}\widetilde{L_3})\bigr)t^k
&=
\frac{1}{\Delta(t)}\sum_{k\in \mathbb{Z}} \bigl(X, t^{k+n_3-n_2}\widetilde{L_3}\bigr)t^k \\
&=
\left(
\frac{1}{\Delta(t)}\sum_{k\in \mathbb{Z}} \bigl(X, t^k\widetilde{L_3}\bigr)t^k
\right)t^{n_2-n_3}
&=h(L_1,\widetilde{L_2}, \widetilde{L_3})t^{n_2-n_3}.
\end{array}
\end{equation*}
%Thus, changing the choices of lifts of $L_2$ and $L_3$ changes $\noteta(L)$ by multiplication by a power of $t$. This proves the second statement.

We now check all at once for invariance under weak cobordism and independence from the choices of $\Delta(t)$ and $X$. We continue to work in the smooth setting for the sake of ease. The details needed to work over $I$-equivalence appear in Remark~\ref{rmk: what if I-equiv}. Let $L=(L_1,L_2,L_3)$ and $L'=(L_1',L_2',L_3')$ be weakly cobordant links with a distinguished component. Let $A$, $Y_2$, and $Y_3$ be the surfaces of Definition~\ref{defn:weak cob}.

Let $\widetilde{Y}_2$ and $\widetilde{Y}_3$ be the respective lifts of $Y_2$ and $Y_3$ to the infinite cyclic cover of $E(A) = S^3 \times [0, 1] \smallsetminus A$, chosen with orientations satisfying $\bdry Y_i = L_i - L_i'$ for $i = 2, 3$. As we have already determined the effect of the choice of lifts, we may as well pick the lifts of $L_i$, $L_i'$ and $Y_i$ so that  $\bdry \widetilde{Y_i}=\widetilde{L_i}-\widetilde{L_i'}$.

Let $X$ and $X'$ be $2$-chains bounded by $\Delta(t) \widetilde{L_2}$ and $\Delta'(t) \widetilde{L_2'}$, respectively. Then
\[
Q=\Delta'(t)X+\Delta(t)\Delta'(t)\widetilde{Y_2}-\Delta(t)X'
\]
is a $2$-cycle. Since $H_2(\widetilde{E(A)})$ is torsion as a $\Z[t,t^{-1}]$-module, it follows that for some $q(t)\in \Z[t,t^{-1}]$, the cycle $q(t) Q$ is a boundary. For any $k\in\Z$,
\[
0=\bigl(q(t) Q, t^k\widetilde{Y_3}\bigr).
\]
Here, $\bigl(q(t) Q, t^k\widetilde{Y}_3\bigr)$ is the pairing with respect to the intersection form $$H_2(\widetilde{E(A)}) \times H_2(\widetilde{E(A)}, \bdry\widetilde{E(A)}) \to \Z.$$
Since $Y_2$ and $Y_3$ are disjoint, the same is true for their lifts. Thus, the only intersections between $q(t) Q$ and $t^k\widetilde{Y_3}$ occur where $t^k\widetilde{L_3}$ intersects $q(t)\Delta'(t)X$ and where $t^k\widetilde{L_3'}$ intersects $q(t)\Delta(t)X'$. Therefore,
\[
0=\bigl(q(t)\Delta'(t)X, t^k\widetilde{L_3}\bigr)-\bigl(q(t)\Delta(t)X', t^k\widetilde{L_3'}\bigr).
\]
Summing over all $k\in \mathbb{Z}$, we conclude that
\[
q(t)\Delta'(t)\sum_{k\in \mathbb{Z}} \bigl(X, t^k\widetilde{L_3}\bigr)t^k
=
q(t)\Delta(t)\sum_{k\in \mathbb{Z}} \bigl(X', t^k\widetilde{L_3'}\bigr)t^k.
\]  Dividing by $q(t)\Delta(t)\Delta'(t)$, we conclude
\[
\noteta(L_1,\widetilde{L_2},\widetilde{L_3})
=
\noteta(L_1',\widetilde{L_2'},\widetilde{L_3'})\in \Q(t),
\]
which completes the proof.
\end{proof}

We remark that Proposition~\ref{prop h is invariant} also holds under $I$-equivalence:

\begin{remark}\label{rmk: what if I-equiv}
Just as in the setting of Kojima--Yamasaki, the argument above goes through with only minor changes if one drops the smoothness conditions from the definition of weak cobordism and allows continuously embedded annuli and surfaces, provided that the links $L$ and $L'$ are still tame. Indeed, much of the content of this remark is an appeal to \cite[Lemma~5]{KojYam79}. If $A$ is a continuously embedded annulus in $S^3\times[0,1]$, we write
\[
E(A):=S^3\times[0,1]\smallsetminus A.
\]

% First we note that even without smoothness, $H_1(S^3\times[0,1]\smallsetminus A)\cong \Z$, so that we may still consider its infinite cyclic cover. Indeed, consider the embedded $2$-sphere $S$ in $S^4$ given by $A$ together with the cones on $L_1$ and $L_1'$. By Alexander--Pontryagin duality (see, for example, \cite[Theorem~74.1]{Munkres84}), we have $H_1(S^4\smallsetminus S)\cong \check{H}^2(S)\cong \Z$. Since $S^3\times(0,1)\smallsetminus A$ is homeomorphic to $S^4\smallsetminus S$, we conclude that $H_1(S^3\times[0,1]\smallsetminus A)\cong \Z$. Similarly, we also conlude $H_2(S^3\times[0,1]\smallsetminus A)=0$.

% \newpage

First note that, since $A$ is closed, $E(A)$ is 
an open subset of $S^3\times[0,1]$, which is a smooth
manifold with boundary, 
and hence $E(A)$ is a smooth (noncompact)
manifold with boundary. Let $S\subset S^4$ be the embedded $2$-sphere obtained
from $A$ by capping off its boundary components with cones on $L_1$ and $L_1'$.
Then $\operatorname{int}(E(A))$ is homeomorphic to $S^4\smallsetminus S$.
Since a manifold with boundary is homotopy equivalent to its interior, it
follows that $E(A)$ is homotopy equivalent to $S^4\smallsetminus S$.

By Alexander--Pontryagin duality (see, for example, \cite[Theorem~74.1]{Munkres84}),
\[
\widetilde{H}_i(E(A))\cong \widetilde{H}_i(S^4\smallsetminus S)\cong \check{\widetilde{H}}^{3-i}(S),
\]
where $\widetilde{H}$ indicates reduced  homology and $\check{\widetilde{H}}$ indicates reduced \v{C}ech cohomology.   Since $E(A)$ is connected and \(S\cong S^2\),  we obtain
\(
H_0(E(A))\cong H_1(E(A))\cong \Z
\)
and all other reduced homology groups vanish. Equivalently,
\[
H_*(E(A))\cong H_*(S^1).
\]
In particular, \(E(A)\) admits an infinite cyclic cover corresponding to a
generator of \(H_1(E(A))\), and \cite[Lemma~5]{KojYam79} 
%applies.
%Next, as observed in \cite[Lemma~5]{KojYam79}, this is enough to 
implies that the intersection form
$$H_2(\widetilde{E(A)})\times H_2(\widetilde{E(A)}, \bdry\widetilde{E(A)})\to \Z$$
vanishes. With these observations, the proof of Proposition~\ref{prop h is invariant} goes through unchanged if $A$ is continuously embedded, and $Y_2$ and $Y_3$ are any embedded surfaces which lift to the infinite cyclic cover of $E(A)$.
\end{remark}

% By a slight abuse of notation, we will use $h(L)$ to denote any representative
% of the class of $h(L_1,\widetilde{L_2},\widetilde{L_3}) \in \sfrac{\Q(t)}{\doteq}$. 

The denominator of $h(L_1,\widetilde{L_2},\widetilde{L_3})\in \Q(t)$ can be taken to be any annihilator of the class of $\widetilde{L_2}$ in the Alexander module of $L_1$. The Alexander polynomial of $L_1$ is such an annihilator. Thus, the denominator of $h(L_1,\widetilde{L_2},\widetilde{L_3})$ is coprime to $(t-1)$, and so we may expand $h(L_1,\widetilde{L_2},\widetilde{L_3})$ as a power series in $t-1$:
$$
h(L_1,\widetilde{L_2},\widetilde{L_3})=\sum_{k=0}^\infty (t-1)^k p_k\in \Q(t).
$$
{The following relation between the $\gamma$-invariants and
$h(L_1,\widetilde{L_2},\widetilde{L_3})$ was established in
\cite[Theorem~1.4]{Tsukamoto-Yasuhara:2007}; compare also
\cite[Theorem~7.1]{C3}. Here again, we provide a proof for completeness.}

\begin{proposition}[{\cite[Theorem~1.4]{Tsukamoto-Yasuhara:2007}}]\label{thm: h is gamma}
Let $L=(L_1,L_2,L_3)$ be a link with  a distinguished component. Consider any Seifert surface $G_1$ for $L_1$.
%disjoint from $L_2$ and $L_3$. 
If $\widetilde{L_2}$ and $\widetilde{L_3}$ both sit in the same lift of $S^3\smallsetminus G_1$, then
$$
\noteta(L_1, \widetilde{L_2},\widetilde{L_3})= \sum_{k=0}^\infty (t-1)^k \gamma^k(L,G_1).
$$
\end{proposition}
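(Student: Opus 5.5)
The plan is to prove a one-step recursion,
\[
\noteta(L_1,\widetilde{L_2},\widetilde{L_3}) \;=\; \gamma^0(L,G_1) + (t-1)\,\noteta(L_1,\widetilde{L_{12}},\widetilde{L_3}),
\]
where all three lifts lie in the same lift of $S^3\smallsetminus G_1$. We then iterate it along the derivatives $D^k(L,G_1)$ and compare Taylor coefficients at $t=1$.

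\textbf{Setup.} Build the infinite cyclic cover $\widetilde{S^3\smallsetminus L_1}$ in the usual way, by gluing copies $E_j=t^jE_0$ of $S^3\smallsetminus \nu(G_1)$ end to end along copies of $G_1$. The convention is that the positive side of $G_1$ in $E_j$ is glued to the negative side in $E_{j+1}$, or the reverse; which one is fixed by the choice of meridian as generator. Let $G_2$ be a Seifert surface for $L_2$ with $G_1\cap G_2=L_{12}^0$ a single curve, as in Definition~\ref{defn:deriv}. Cut $G_2$ along $G_1$ to get a surface $S$. It lies in $S^3\smallsetminus G_1$, and as in the computation before Proposition~\ref{thmbody:seifert-matrix} it has boundary $L_2-i^+(L_{12}^0)+i^-(L_{12}^0)$.

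\textbf{Lifting $S$.} Lift $S$ into $E_0$, the copy containing $\widetilde{L_2}$ and $\widetilde{L_3}$. The curve $i^+(L_{12}^0)$ is exactly $L_{12}$, and its lift in $E_0$ is $\widetilde{L_{12}}$. The curve $i^-(L_{12}^0)$ sits on the opposite side of the same copy of $G_1$. Rewriting it as a translate of a curve in $E_0$ shows that it equals $t^{\pm1}\widetilde{L_{12}}$ up to a homotopy across the product neighbourhood of $G_1$. That homotopy is a thin annulus disjoint from $L_3$, since $G_1$ is disjoint from $L_3$. This yields the chain-level relation
\[
\widetilde{L_2}=\partial \widetilde S' + \epsilon(t-1)\widetilde{L_{12}},\qquad \epsilon=\pm1,
\]
where $\widetilde S'$ is $\widetilde S$ plus the annulus. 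Fixing $\epsilon=+1$ means tracking the orientation convention $n_{G_1}\times n_{G_2}$ for $L_{12}$ together with the deck-transformation convention. I expect this sign and side bookkeeping to be the main obstacle. The example in Figure~\ref{fig:example1} is a consistency check: there $L_{12}\simeq -L_2$ off $G_1$, so the recursion should give $\noteta=1-(t-1)\noteta$, hence $\noteta=1/t$, which matches $(1,-1,1,\dots)$.

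\textbf{The recursion.} Take $X$ with $\partial X=\Delta(t)\widetilde{L_{12}}$. Then $\Delta(t)\widetilde S'+(t-1)X$ is a $2$-chain bounded by $\Delta(t)\widetilde{L_2}$, so by Proposition~\ref{prop h is invariant}\pref{indep of X and Delta} we may use it to compute $\noteta(L_1,\widetilde{L_2},\widetilde{L_3})$. Since $\widetilde S'$ lies in $E_0$ and a neighbourhood of its boundary, and the translates $t^k\widetilde{L_3}$ lie in $E_k$, only $k=0$ contributes to its intersections. That contribution is $S\cdot L_3=G_2\cdot L_3=\lk(L_2,L_3)=\gamma^0(L,G_1)$. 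Linearity of the defining sum in the chain then gives the displayed recursion. Note that $D(L,G_1)$ again has $L_{12}$ and $L_3$ lying in $E_0$, so the hypothesis of the proposition is preserved.

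\textbf{Iteration and conclusion.} Applying the recursion $n$ times, and using $\gamma^0(D^k(L,G_1))=\gamma^k(L,G_1)$, gives
\[
\noteta(L_1,\widetilde{L_2},\widetilde{L_3})=\sum_{k=0}^{n-1}(t-1)^k\gamma^k(L,G_1)+(t-1)^n\,\noteta(L_1,\widetilde{L_{1^n2}},\widetilde{L_3}).
\]
The last term is a rational function whose denominator divides the Alexander polynomial of $L_1$, which is coprime to $t-1$. So its expansion at $t=1$ starts in degree at least $n$. Hence the first $n$ Taylor coefficients of $\noteta$ are $\gamma^0,\dots,\gamma^{n-1}$. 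Since $n$ is arbitrary, this gives the stated identity of power series in $t-1$.
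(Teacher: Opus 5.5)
This is essentially the paper's proof: the paper's chain $H$, the lift of $G_2$ cut open along $L_{12}$ with $\partial H=\widetilde{L_2}-(t-1)\widetilde{L_{12}}$ and $(H,t^k\widetilde{L_3})=\delta_{k0}\lk(L_2,L_3)$, is your $\widetilde S'$ with $\epsilon=+1$, and from it the paper obtains the same recursion $\noteta(L_1,\widetilde{L_2},\widetilde{L_3})=\lk(L_2,L_3)+(t-1)\,\noteta(L_1,\widetilde{L_{12}},\widetilde{L_3})$, iterates it, and compares coefficients of powers of $t-1$ exactly as you do. The paper simply asserts the sign, and your example cannot test it (iterating your recursion gives $\sum_k\epsilon^k(t-1)^k\gamma^k(L,G_1)$ for every link, so the example only restates that the claimed formula requires $\epsilon=+1$); a local check settles it instead: with $L_{12}^0$ oriented by $n_{G_1}\times n_{G_2}$ the cut surface actually has $\partial S=L_2+i^+(L_{12}^0)-i^-(L_{12}^0)$, the opposite of the formula you imported (which would yield $\epsilon=-1$), and since a positive meridian of $L_1$ crosses $G_1$ from its negative to its positive side, the lift of $i^-(L_{12}^0)$ in $E_0$ is joined across a lift of $G_1$ to $t\widetilde{L_{12}}$, so $\partial\widetilde S'=\widetilde{L_2}+\widetilde{L_{12}}-t\widetilde{L_{12}}$ and indeed $\epsilon=+1$.
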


\begin{proof}
Let $G_1$ be a Seifert surface for $L_1$.
%disjoint from $L_2$ and $L_3$. 
Then $S^3\smallsetminus G_1$ forms a fundamental domain for the infinite cyclic cover $\widetilde{S^3\smallsetminus L_1}$. Let $\widetilde{L_2}$ and $\widetilde{L_3}$ both sit in the same lift of $S^3\smallsetminus G_1$.
Let $G_2$ be a Seifert surface for $L_2$ 
%disjoint from $L_1$ and
intersecting $G_1$ in a derivative $L_{12}$. Then the lift of $G_2$ cut open along $L_{12}$ gives a $2$-chain $H$ in $\widetilde{S^3\smallsetminus L_1}$ bounded by $\widetilde{L_2}-(t-1)\widetilde{L_{12}}$, with
\[
\bigl(H, t^k \widetilde{L_3}\bigr)=
\begin{cases}
\lk(L_2,L_3) & \text{if } k=0,\\
0 & \text{otherwise.}
\end{cases}
\]
If $X$ is a $2$-chain bounded by $\Delta(t)\widetilde{L_{12}}$, then
\[
\bdry\bigl((t-1)X+\Delta(t)H\bigr)=\Delta(t)\widetilde{L_2},
\]
and so
\begin{align*}
\noteta(L_1,\widetilde{L_2},\widetilde{L_3})
&=
\frac{1}{\Delta(t)}\sum_{k\in \mathbb{Z}} \bigl((t-1)X+\Delta(t)H, t^k\widetilde{L_3}\bigr)t^k \\
&=
\frac{1}{\Delta(t)}\sum_{k\in \mathbb{Z}} \bigl((t-1)X, t^k\widetilde{L_3}\bigr)t^k
+
\frac{1}{\Delta(t)}\sum_{k\in \mathbb{Z}} \bigl(\Delta(t)H, t^k\widetilde{L_3}\bigr)t^k \\
&=
(t-1)\noteta(L_1,\widetilde{L_{12}},\widetilde{L_3})+\lk(L_2, L_3).
\end{align*}

Applying the same argument to $\noteta(D(L,G_1))$ yields
\begin{align*}
\noteta(L_1,\widetilde{L_2},\widetilde{L_3})
&=(t-1)\bigl((t-1)\noteta(L_1,\widetilde{L_{112}},\widetilde{L_3})+\lk(L_{12},L_3)\bigr)+\lk(L_2,L_3) \\
&=(t-1)^2\noteta(L_1,\widetilde{L_{112}},\widetilde{L_3})+(t-1)\gamma^1(L,G_1)+\gamma^0(L,G_1).
\end{align*}
Repeating this process $k$ times gives
\[
\noteta(L_1,\widetilde{L_2},\widetilde{L_3})=(t-1)^{k+1}\noteta(L_1,\widetilde{L_{1^{k+1}2}},\widetilde{L_3})+\sum_{i=0}^{k}(t-1)^i\gamma^i(L,G_1).
\]
Note that similarly to $\noteta(L_1,\widetilde{L_2},\widetilde{L_3})$, the rational function $\noteta(L_1,\widetilde{L_{1^{k+1}2}},\widetilde{L_3})$ admits an expansion as a power series in $(t-1)$:
\[
\noteta(L_1,\widetilde{L_{1^{k+1}2}},\widetilde{L_3})=\sum_{j=0}^\infty r_j (t-1)^j
\]
for some sequence $(r_0,r_1,\dots)$. Thus,
\[
\noteta(L_1,\widetilde{L_2},\widetilde{L_3})=\sum_{j=0}^\infty r_j (t-1)^{j+k+1}+\sum_{i=0}^{k}(t-1)^i\gamma^i(L,G_1).
\]
In particular, for all $i\le k$, the coefficient of $(t-1)^i$ in $\noteta(L_1,\widetilde{L_2},\widetilde{L_3})$ is $\gamma^i(L,G_1)$. Since $k$ is arbitrary, it follows that the coefficient of $(t-1)^i$ in $\noteta(L_1,\widetilde{L_2},\widetilde{L_3})$ is $\gamma^i(L,G_1)$ for every $i\ge 0$. This proves the theorem.
\end{proof}

Proposition~\ref{prop h is invariant} and Proposition~\ref{thm: h is gamma} together establish the invariance of $\gamma$. Indeed, if $G$ and $G'$ are two Seifert surfaces for $L_1$ disjoint from $L_2$ and $L_3$, then by Proposition~\ref{prop h is invariant} and Proposition~\ref{thm: h is gamma}, both
\[
\sum_{k=0}^\infty (t-1)^k \gamma^k(L,G)
\qquad\text{ and }\qquad
\sum_{k=0}^\infty (t-1)^k \gamma^k(L,G')
\]
represent $h(L)\in \sfrac{\Q(t)}{\doteq}$. Hence, they differ by multiplication by a power of $t$. Substituting $x=t-1$, we obtain
\[
\sum_{k=0}^\infty x^k \gamma^k(L,G)=(x+1)^i\sum_{k=0}^\infty x^k \gamma^k(L,G')
\]
for some $i\in \Z$. Since Proposition~\ref{prop h is invariant} also shows that $h(L)$ is an invariant of the weak cobordism class of $L$, the following corollary is immediate.

\begin{corollary}\label{corollary: well defined}
Let $\Z^\infty=\{(a_0,a_1,\dots)\}$ denote the set of all integer-valued sequences, and let $T\colon \Z^\infty\to \Z^\infty$ be the right shift operator defined by
\[
T(a_0,a_1,\dots)=(0,a_0,a_1,\dots).
\]
Let $\sim$ be the equivalence relation on $\Z^\infty$ generated by
$a\sim (T+\mathrm{Id})(a)$
for each $a\in \Z^\infty$. Then the class of
\[
\gamma(L)=(\gamma^0(L),\gamma^1(L),\dots)\in \Z^\infty/{\sim}
\]
depends only on the weak cobordism class of $L$. \qed
\end{corollary}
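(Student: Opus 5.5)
The plan is to deduce the corollary from Proposition~\ref{prop h is invariant} and Proposition~\ref{thm: h is gamma} by converting the ambiguity ``multiplication by a power of $t$'' in $\Q(t)$ into the action of $T+\mathrm{Id}$ on coefficient sequences.

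\emph{Step 1: the power-series model.} Set $x=t-1$ and identify a sequence $a=(a_0,a_1,\dots)\in\Z^\infty$ with the formal power series $\sum_k a_k x^k\in\Z[[x]]$. Multiplication by $t=1+x$ sends $\sum a_kx^k$ to $\sum (a_k+a_{k-1})x^k$, which is exactly $(T+\mathrm{Id})(a)$. The series $1+x$ is a unit in $\Z[[x]]$ with inverse $\sum(-1)^kx^k$. So $(T+\mathrm{Id})$ is a bijection of $\Z^\infty$, and the relation $\sim$ identifies $a$ with $b$ exactly when $b=(1+x)^i a$ for some $i\in\Z$.

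\emph{Step 2: two surfaces on the same link.} Let $G$ and $G'$ be two Seifert surfaces for $L_1$. In each case, choose lifts $\widetilde{L_2},\widetilde{L_3}$ lying in a single lift of $S^3\smallsetminus G$ (respectively of $S^3\smallsetminus G'$). Proposition~\ref{thm: h is gamma} identifies the Taylor expansion at $t=1$ of the corresponding representative of $\noteta$ with $\sum\gamma^k(L,G)x^k$ (respectively $\sum\gamma^k(L,G')x^k$). By Proposition~\ref{prop h is invariant}\pref{choice of lifts}, the two representatives differ by a factor $t^{i}$ for some $i\in\Z$. The denominators are coprime to $t-1$, so Taylor expansion at $t=1$ is an injective ring homomorphism from the local ring $\Q[t]_{(t-1)}$ into $\Q[[x]]$. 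Hence the two integer sequences satisfy $\sum\gamma^k(L,G)x^k=(1+x)^i\sum\gamma^k(L,G')x^k$. By Step~1, they are equivalent under $\sim$. Thus the class of $\gamma(L)$ does not depend on the Seifert surface.

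\emph{Step 3: weak cobordism.} If $L\simeq_w L'$, Proposition~\ref{prop h is invariant}\pref{item: weak cob} shows that $\noteta(L_1,\widetilde{L_2},\widetilde{L_3})$ equals $\noteta(L_1',t^{n_2}\widetilde{L_2'},t^{n_3}\widetilde{L_3'})$. By \pref{choice of lifts}, this equals $t^{n_2-n_3}\noteta(L_1',\widetilde{L_2'},\widetilde{L_3'})$. Now choose the lifts on each side inside a single fundamental domain for Seifert surfaces $G$ and $G'$. Then Proposition~\ref{thm: h is gamma} and the argument of Step~2 give $\gamma(L,G)\sim\gamma(L',G')$. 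Combined with Step~2, this shows that the class depends only on the weak cobordism class of $L$.

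The only point requiring care is that Taylor expansion is injective and multiplicative on rational functions with denominator prime to $t-1$, so that the identity in $\Q(t)$ transfers to $\Z[[x]]$. This is standard, and nothing here is a genuine obstacle.
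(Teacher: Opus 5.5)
Your proposal is correct and follows essentially the same route as the paper: combine Proposition~\ref{prop h is invariant} (changing lifts multiplies $\noteta$ by $t^{n_2-n_3}$, and $\noteta$ is invariant under weak cobordism) with Proposition~\ref{thm: h is gamma}, then substitute $x=t-1$ so that multiplication by $t$ becomes the action of $T+\mathrm{Id}$. The paper treats this as immediate. You additionally spell out two points it leaves implicit: that $\sim$-classes are exactly the orbits under multiplication by $(1+x)^{i}$, $i\in\Z$, and that Taylor expansion at $t=1$ is injective and multiplicative on rational functions whose denominators are prime to $t-1$.
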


In fact, it is straightforward to determine whether two sequences in $\Z^\infty$ are equivalent with respect to the equivalence relation of Corollary~\ref{corollary: well defined}.  We start with an illustrative example. In Figure~\ref{fig: arbitrary gamma}, we exhibited links for which $(\gamma^0(L),\dots,\gamma^n(L))$ can take any prescribed sequence of values, with $\gamma^k(L)=0$ for $k>n$. Consider two links with $\gamma(L)=(1,3,r,0,0,\dots)$ and $\gamma(L')=(1,4,q,0,0,\dots)$ for some $q,r\in \Z$. We take a moment to analyze whether these are equal modulo the action of $(T+\mathrm{Id})$. A direct induction shows that $$(T+\mathrm{Id})^m(1,3,\dots)=(1,3+m,\dots),$$ so the only way to obtain a sequence beginning with $(1,4,\dots)$ from $(1,3,r,0,0,\dots)$ is to take $m=1$. In that case,
\[
(T+\mathrm{Id})(1,3,r,0,0,\dots)=(1,4,3+r,r,0,\dots).
\]
This agrees with $(1,4,q,0,0,\dots)$ only if $r=0$ and $q=3$.

This example illustrates the following. If $\gamma^k(L)$ is the first nonvanishing $\gamma$-invariant of $L$, then $\gamma^k\in \Z$ is an invariant, $\gamma^{k+1}$ is well defined modulo $\gamma^k$, and after accounting for that indeterminacy, $\gamma^n\in \Z$ is well defined for all $n>k+1$. The following makes this explicit.

\begin{corollary}\label{cor:lift}
Let $L=(L_1, L_2, L_3)$ and $L'=(L_1',L_2', L_3')$ be weakly cobordant links, and let $k\in \N$. Assume that $\gamma^j(L)=0$ for all $j<k$ and $\gamma^k(L)\neq 0$. Then:
\begin{enumerate}
\item \label{item:cor-gamma-k}
$\gamma^k(L')=\gamma^k(L)$.
\item \label{item:cor-gamma-k+1}
$\gamma^{k+1}(L')-\gamma^{k+1}(L)=p \gamma^k(L)$ for some $p\in \Z$.
\item \label{item:cor-gamma-ell}
If $\gamma^{k+1}(L')-\gamma^{k+1}(L)=p \gamma^k(L)$ with $p\ge 0$, then 
\begin{eqnarray*}\gamma^{k+2}(L')&=&\gamma^{k+2}(L)+p\gamma^{k+1}(L)+{p\choose 2}\gamma^k(L),\\
\gamma^{k+3}(L')&=&\gamma^{k+3}(L)+p\gamma^{k+2}(L)+{p\choose 2}\gamma^{k+1}(L)+{p\choose 3}\gamma^{k}(L),
\end{eqnarray*}
and in general, if we follow the convention that $\gamma^i(L)=0$ when $i<0$ then for all $\ell>k+1$,
\begin{eqnarray*}
\gamma^\ell(L') &=& \gamma^\ell(L)+p\gamma^{\ell-1}(L)+{p\choose 2}\gamma^{\ell-2}(L)+\dots + p\gamma^{\ell-p+1}(L)+\gamma^{\ell-p}(L) \\
&=& \sum_{q=0}^p {p\choose q}\,\gamma^{\ell-q}(L).
\end{eqnarray*}
\end{enumerate}
\end{corollary}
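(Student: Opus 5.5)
The plan is to translate everything into power series, where $T+\mathrm{Id}$ becomes multiplication by $(1+x)$.

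\textbf{Setup.} Identify a sequence $a=(a_0,a_1,\dots)\in\Z^\infty$ with the formal power series $f_a(x)=\sum_{j\ge0}a_jx^j\in\Z[[x]]$. Then $f_{Ta}=xf_a$, so $f_{(T+\mathrm{Id})a}=(1+x)f_a$. Since $(1+x)$ is a unit in $\Z[[x]]$, with inverse $\sum_j(-1)^jx^j$, the operator $T+\mathrm{Id}$ is a bijection of $\Z^\infty$. Hence the equivalence relation $\sim$ of Corollary~\ref{corollary: well defined} is exactly
\[
a\sim b\iff f_b=(1+x)^mf_a \text{ for some } m\in\Z.
\]

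Here $\gamma(L)$ and $\gamma(L')$ mean $\gamma(L,G)$ and $\gamma(L',G')$ for some choices of Seifert surfaces. Combine Proposition~\ref{prop h is invariant} (invariance of $h$ under weak cobordism, up to a power of $t$) with Proposition~\ref{thm: h is gamma}, and substitute $x=t-1$, exactly as in the discussion preceding Corollary~\ref{corollary: well defined}. This gives
\[
\sum_j\gamma^j(L')x^j=(1+x)^m\sum_j\gamma^j(L)x^j \quad\text{in } \Q(t),
\]
and hence in $\Q[[x]]$. Both sides have integer coefficients, so this is an identity in $\Z[[x]]$.

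\textbf{Comparing coefficients.} By hypothesis,
\[
f_{\gamma(L)}=x^k\bigl(\gamma^k(L)+\gamma^{k+1}(L)x+\cdots\bigr).
\]
Expanding $(1+x)^m=1+mx+\binom m2x^2+\cdots$, with generalized binomial coefficients when $m<0$, all of which are integers, we read off the following.
\begin{itemize}
\item The coefficients of $x^j$ for $j<k$ vanish in $f_{\gamma(L')}$, and the coefficient of $x^k$ is $\gamma^k(L)$. This proves \eqref{item:cor-gamma-k}.
\item The coefficient of $x^{k+1}$ is $\gamma^{k+1}(L)+m\gamma^k(L)$. This proves \eqref{item:cor-gamma-k+1} with $p=m$.
\end{itemize}

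\textbf{The case $p\ge0$.} Because $\gamma^k(L)\neq0$, the integer $p$ is uniquely determined, so $m=p$. When $p\ge0$, $(1+x)^p=\sum_{q=0}^p\binom pqx^q$ is an honest polynomial. The coefficient of $x^\ell$ in $(1+x)^pf_{\gamma(L)}$ is then $\sum_{q=0}^p\binom pq\gamma^{\ell-q}(L)$, with the convention $\gamma^i(L)=0$ for $i<0$. This gives \eqref{item:cor-gamma-ell}, including the displayed cases $\ell=k+2,k+3$.

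\textbf{Main obstacle.} The only delicate point is justifying that the relation between $\gamma(L)$ and $\gamma(L')$ is given by a single power $(1+x)^m$. This requires three checks: the Seifert-surface ambiguity and the weak-cobordism ambiguity each contribute a power of $t$ via $\doteq$; these powers compose; and the generated equivalence relation introduces nothing more. All three follow from the invertibility of $1+x$ in $\Z[[x]]$ together with the two propositions cited above. After that, the argument is pure coefficient comparison.
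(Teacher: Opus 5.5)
Your proposal is correct and follows the paper's argument. Both use Propositions~\ref{prop h is invariant} and~\ref{thm: h is gamma} to write $\sum_j\gamma^j(L')x^j=(1+x)^n\sum_j\gamma^j(L)x^j$, and then compare the coefficients of $x^k$, $x^{k+1}$ and $x^\ell$. The only difference is how negative exponents are handled: the paper reduces to $n\ge 0$ by swapping $L$ and $L'$, while you treat $n<0$ directly with integer-valued generalized binomial coefficients, which avoids having to check that the vanishing hypothesis on the lower $\gamma$'s transfers to $L'$.
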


\begin{proof}
Since $L$ and $L'$ are weakly cobordant, it follows (after the change of variables $x=t-1$) that
$$
\noteta(L')=(1+x)^n\noteta(L)
$$
for some $n\in \Z$. By swapping $L$ and $L'$ as appropriate, we may assume that $n\ge 0$. By the binomial theorem and recalling that $\gamma^j(L)=0$ for all $j<k$, we have
$$\noteta(L')=\Sum_{q=0}^n {n\choose q}x^q\cdot \Sum_{j=k}^\infty \gamma^j(L)\,x^j.$$
Expanding the product, reindexing by $j+q=m$, and using that $\gamma^{m-q}(L)=0$ when $m-q<0$, 
\begin{equation}\label{eqn: expand prod}
\noteta(L')
=
\Sum_{m=k}^\infty
\left(\Sum_{q=0}^{n}{n\choose q}\gamma^{m-q}(L)\right)x^m.
\end{equation}
We can compute the $x_k$-coefficient to get
$$
\gamma^k(L')=\Sum_{q=0}^{n}{n\choose q}\gamma^{k-q}(L)=\gamma^k(L),
$$
where the second equality follows since $\gamma^j(L)=0$ for all $j<k$.  This proves \pref{item:cor-gamma-k}.

Comparing the coefficient of $x^{k+1}$ in equation \pref{eqn: expand prod} yields
$$
\gamma^{k+1}(L')={n\choose 0}\gamma^{k+1}(L)+{n\choose 1}\gamma^{k}(L)
$$
so $\gamma^{k+1}(L')-\gamma^{k+1}(L)=n \gamma^k(L)$. This proves claim \pref{item:cor-gamma-k+1} (with $p=n$).

Finally, assume $\gamma^{k+1}(L')-\gamma^{k+1}(L)=p \gamma^k(L)$ with $p\ge 0$. Since $\gamma^k(L)\neq 0$, the previous paragraph implies that $p=n$. Comparing the coefficient of $x^\ell$ in equation~\eqref{eqn: expand prod} for any $\ell>k+1$ gives
$$
\gamma^\ell(L')=\Sum_{q=0}^{p}{p\choose q}\gamma^{\ell-q}(L),
$$
proving \pref{item:cor-gamma-ell}.
\end{proof}

\section{The $\gamma$-invariant as a lift of Milnor invariants}\label{sect: lift of Milnor}

As mentioned in the introduction, the Milnor invariant $\bar{\mu}_I(L)$ is, in general, not an integer-valued invariant; rather, it is only well defined modulo the indeterminacy~\cite{M2}. More precisely, for any link $L$ and any fixed multi-index $I=(i_1 \dots i_n)$, let $J_I\subseteq \Z$ be the ideal generated by
\[
\bigl\{\bar{\mu}_J(L)\,\bigm|\,
J=(i_{k_1}\dots i_{k_m}),\ 1< m<n,\ 1\le k_1<\cdots<k_m\le n\bigr\}.
\]
Then $\bar{\mu}_I(L)$ is well defined as an element of $\Z/J_I$. 

In \cite{Porter80}, Porter interprets Milnor's $\bar{\mu}$-invariants in terms of Massey products in the link complement; in particular, this yields link invariants with the same indeterminacy as Milnor's $\bar{\mu}$-invariants. In \cite[Theorem~6.10]{Coc90}, Cochran uses Massey products to prove that his integer-valued $\beta$-invariant recovers $\bar{\mu}(1^{2k}22)$ modulo this indeterminacy. In this section we follow a similar path to prove that $\gamma^k$ recovers $\bar{\mu}(1^{k}23)$.

\begin{theorem}\label{thm: gamma as Milnor}
Let $I=(1^k23)$ and let $L$ be a $3$-component link with $\lk(L_1,L_2)=\lk(L_1,L_3)=0$. Then $\gamma^k(L)\equiv \bar{\mu}_L(I)\mod J_I$.
\end{theorem}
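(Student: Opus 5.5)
We plan to follow Cochran's route in \cite[Theorem~6.10]{Coc90}, using Porter's description \cite{Porter80} of $\bar{\mu}_L(I)$ as a Massey product $\langle \alpha_1,\dots,\alpha_1,\alpha_2,\alpha_3\rangle$ in $H^*(S^3\smallsetminus L)$. Here $\alpha_i$ is the class dual to the meridian of $L_i$, and the product is evaluated on the class of the boundary torus of $L_3$, or dually on a longitude of $L_3$. Porter's theorem gives this evaluation with exactly the indeterminacy $J_I$. It therefore suffices to build one explicit defining system for $\langle \alpha_1^{(k)},\alpha_2,\alpha_3\rangle$ whose value on the longitude of $L_3$ is $\pm\gamma^k(L,G_1)$, with the sign fixed by convention.

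The defining system will be made from geometric cochains given by Seifert surfaces, in the manner of Cochran's derivative interpretation. The cochain $\alpha_1$ is represented by $G_1$, $\alpha_2$ by $G_2$, and $\alpha_3$ by a Seifert surface $G_3$ for $L_3$. The cup product $\alpha_1\cup\alpha_2$ is represented by $G_1\cap G_2$, which is the curve $L_{12}$ before push-off. This is a coboundary because $L_{12}$ bounds a Seifert surface $G_{12}$ disjoint from $L$, which is possible after tubing since it has zero linking with each component. So $a_{12}$ is represented by $G_{12}$, and inductively $a_{1^j2}$ is represented by a Seifert surface $G_{1^j2}$ for $L_{1^j2}$. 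The triple-intersection terms $\alpha_1\cup a_{1^{j-1}2}$ are $G_1\cap G_{1^{j-1}2}=L_{1^j2}$. This only works if the other entries $a_{1\cdots1}$ of the defining system vanish: $\alpha_1\cup\alpha_1=0$ is represented by pushing $G_1$ off itself, and the higher entries $a_{1^m}$ are chosen to be zero. The remaining entries of the defining system end in $\alpha_3$: these are $a_{23}$, $a_{123}$, and so on. We will use the fact that $\lk(L_1,L_3)=0$, choose $G_3$ to meet $G_1$ in a curve, and organize the entries so that only the terms $a_{1^k2}\cup\alpha_3$ contribute when evaluating on the longitude of $L_3$. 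That evaluation is then $G_{1^{k-1}2}\cap G_1$ counted against $L_3$, namely $\lk(L_{1^k2},L_3)=\gamma^k(L,G_1)$.

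As a sanity check before the Massey formalism, we can also argue as follows. Since both sides are invariant modulo indeterminacy, we may verify the congruence by using the Magnus expansion of the longitude of $L_3$. Each derivative step corresponds to factoring out $X_1$ from the Magnus coefficient in $X_1^kX_2$, as in Cochran's original link between derivatives and $\bar\mu(1^{2k}22)$.

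The main obstacle is the bookkeeping of the defining system. We must check that all cup products of the form $\alpha_1\cup a_{1^j}$ and the mixed terms involving $\alpha_3$ can be made to vanish, or to contribute only to multiples of lower $\bar\mu(1^i23)$, and we must track pushoff directions and signs. The lower invariants $\bar\mu(1^i23)$ for $i<k$ are exactly what the $\gamma^i$ with $i<k$ compute. So any failure to kill a term should be absorbed into $J_I$, consistent with Corollary~\ref{cor:lift}. The other index subsets that make up $J_I$ vanish because of the linking hypotheses: $\bar\mu(1^i2)$ and $\bar\mu(1^i3)$ vanish since $\lk(L_1,L_2)=\lk(L_1,L_3)=0$, which matches Cochran's two-component computation. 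We expect this subset-by-subset identification of $J_I$ to need the most care.
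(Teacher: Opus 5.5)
Your overall route is the paper's: Porter's Massey product, a defining system whose $\alpha_1$-block is zero and whose column ending in $\alpha_2$ comes from Seifert surfaces for the iterated derivatives $L_{1^j2}$, evaluated on the torus $T_3$ about $L_3$. Two steps, however, do not work as written.

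First, it is false that $L_{1^j2}$ bounds a surface $G_{1^j2}$ disjoint from all of $L$ ``since it has zero linking with each component.'' In fact $\lk(L_{1^j2},L_3)=\gamma^j(L,G_1)$, and for $j\ge 2$ even $\lk(L_{1^j2},L_2)$ can be nonzero (compare Theorem~\ref{thm:beta-from-gamma}). The claim also defeats itself. If $G_{1^k2}$ missed $L_3$, then $a_{1^k2}\cup\alpha_3$ would restrict to zero on $T_3$, and your final evaluation would give $0$ rather than $\gamma^k$. The fix is that in Porter's formalism $a_{1^j2}$ only needs to be a cochain on $X_i\cap\cdots\cap X_j=S^3\smallsetminus(L_1\cup L_2)$. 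So the surface for $L_{1^j2}$ may cross $L_3$, and the signed count of those crossings is exactly what produces $\gamma^k$.

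Second, and this is the real content you deferred as ``bookkeeping'': you never show that the entries $a_{23},a_{123},\dots,a_{1^{k-1}23}$ exist. Nor do you show why $\alpha_1\cup a_{1^{k-1}23}$ contributes nothing. Appealing to the $\gamma^i$ for $i<k$ does not supply either.

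The paper handles existence by induction.
\begin{itemize}
\item The product $\alpha_2\cup\alpha_3$ is a coboundary with $\Z/J_I$ coefficients because $\bar\mu_L(23)=\lk(L_2,L_3)\in J_I$.
\item Suppose $a_{23},\dots,a_{1^{j-1}23}$ have been chosen. Then $\alpha_1\cup a_{1^{j-1}23}+a_{1^j2}\cup\alpha_3$ is a cocycle representing an element of the shorter Massey product with $j$ copies of $\alpha_1$. By Porter's theorem applied to $(1^j23)$, this element is $\pm\bar\mu_L(1^j23)\,t_{1,3}$, which vanishes mod $J_I$ when $j<k$. So $a_{1^j23}$ exists.
\end{itemize}

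For the final evaluation, $\alpha_1\cup a_{1^{k-1}23}$ vanishes on $T_3$ whatever $a_{1^{k-1}23}$ is. This is because $\alpha_1$ is dual to $G_1$, which is disjoint from $L_3$. That is where $\lk(L_1,L_3)=0$ enters, not in making $G_3$ meet $G_1$ in a curve.

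With this argument in place, no subset-by-subset identification of $J_I$ is needed. You only use that $\bar\mu_L(1^j23)$ for $j<k$ are proper-subsequence invariants and hence lie in $J_I$. The Magnus-expansion sanity check is not an argument and can be dropped.
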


We begin by recalling Porter's formulation of Milnor invariants in terms of Massey products.
Let $X_1,\dots,X_p$ be topological spaces and let $R$ be a commutative ring. For each $i$,
let $x_i\in H^1(X_i;R)$.
A \emph{defining system} for the Massey product $\langle x_1,\dots,x_p\rangle$ with respect to
$\{X_1,\dots,X_p\}$ and coefficients in $R$ is a collection of $1$-cochains
\[
\{m_{i,j}\mid 1\le i\le j\le p,\ (i,j)\neq (1,p)\}
\]
satisfying:
\begin{enumerate}
\item $m_{i,j}\in C^1(X_i\cap\cdots\cap X_j;R)$,
\item $m_{i,i}$ is a cocycle representative for $x_i$,
\item For $i<j$, we have
\[
\delta(m_{i,j})=\sum_{k=i}^{j-1} m_{i,k}\cup m_{k+1,j},
\]
where $\cup$ denotes the cup product on $X_i\cap\cdots\cap X_j$.
\end{enumerate}
The Massey product $\langle x_1,\dots,x_p\rangle$ is the subset of
$H^2(X_1\cap\cdots\cap X_p;R)$ consisting of the collection of all
\[
\left[\sum_{k=1}^{p-1} m_{1,k}\cup m_{k+1,p}\right],
\]
as the defining system varies. Porter shows that, for link complements, this recovers the
corresponding Milnor invariant.

\begin{theorem}[{\cite[Theorem~3]{Porter80}}]\label{thm: Porter}
Let $L=(L_1,\ldots, L_n)$ be a link in $S^3$, let $I=(i_1,\dots,i_p)$ be a sequence with
$1\le i_k\le n$, and let $R=\Z/J_I$. For each $i=1,\dots,n$, let
$u_i\in H^1(S^3\smallsetminus L_i;R)\cong R$ be the class dual to the meridian of $L_i$.
Then the Massey product $\langle u_{i_1},\dots,u_{i_p}\rangle$ with respect to
$\{S^3\smallsetminus L_{i_1},\dots,S^3\smallsetminus L_{i_p}\}$ is defined and contains only
$(-1)^p\,\bar{\mu}_L(I)\cdot t_{i_1,i_p}$. Here $t_{i_1,i_p}\in H^2(S^3\smallsetminus L;R)$ is the
Lefschetz dual in $S^3\smallsetminus L$ to an arc running from $L_{i_1}$ to $L_{i_p}$.
\end{theorem}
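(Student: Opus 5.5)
This is a result we are quoting from \cite{Porter80} rather than proving, but here is the route I would take to reprove it. The approach is to translate Massey products into unipotent representations of the link group and then read off the Milnor invariant from the Magnus expansion of a longitude. First, I would pass from the subspaces $S^3\smallsetminus L_{i_k}$ to the complement $E=S^3\smallsetminus L$. Each $u_{i_k}$ restricts to the class dual to the meridian $x_{i_k}$. Using $\pi=\pi_1(E)$ and the classifying map $E\to K(\pi,1)$, which is an isomorphism on $H^1$ and injective on $H^2$ after restricting to the $2$-skeleton, I may compute with the group (or Wirtinger/Milnor presentation) chain complex.

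Next, I would use the standard dictionary, due to Dwyer and Fenn--Sjerve, between defining systems for $\langle u_{i_1},\dots,u_{i_p}\rangle$ over $R$ and homomorphisms $\rho\colon\pi\to U_{p+1}(R)$. Here $U_{p+1}(R)$ denotes the upper-triangular unipotent matrices, and $\rho$ is required to have superdiagonal entries $u_{i_1},\dots,u_{i_p}$ and to have its $(1,p+1)$ entry left undefined. The $m_{i,j}$ are the $(i,j+1)$ entries. The Massey product is then the obstruction to extending $\rho$ to the corner entry.

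To exhibit such a $\rho$, I would use Milnor's presentation of the nilpotent quotient $\pi/\pi_q$, with generators the meridians $x_j$ and relations $[x_j,\lambda_j]$. I would then send $x_j$ to the truncated Magnus expansion $1+X_j$, restricted to the letters of $I$ in their positions. The relations hold in the truncated algebra over $R=\Z/J_I$ exactly because every $\bar\mu_L(J)$ with $J$ a proper subsequence of $I$ vanishes in $R$. This shows that the Massey product is defined.

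Then I would evaluate the obstruction cocycle $\sum_k m_{1,k}\cup m_{k+1,p}$ on $H_2(E)$. This group is generated by the boundary tori $T_j$, and Lefschetz duality identifies the class $t_{i_1,i_p}$ by its values on them: $\pm1$ on $T_{i_1}$, $\mp1$ on $T_{i_p}$, and $0$ on the others. On the torus $T_{i_p}$, with fundamental class given by the relator $[x_{i_p},\lambda_{i_p}]$, the obstruction evaluates via Fox calculus to the coefficient of $X_{i_1}\cdots X_{i_{p-1}}$ in the Magnus expansion of $\lambda_{i_p}$. This coefficient is $\bar\mu_L(I)$ by definition. The other tori contribute $0$, or the matching value on $T_{i_1}$ by cyclic symmetry, and tracking the cup-product and coboundary conventions produces the sign $(-1)^p$.

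The main obstacle is showing that the Massey product contains \emph{only} this element, that is, controlling the indeterminacy. Changing the defining system alters the $m_{i,j}$ by cocycles, and this changes the product by sums of products of shorter Massey products. Those shorter products are, inductively, multiples of shorter $\bar\mu$-invariants times duals of arcs, so they vanish in $R=\Z/J_I$. I would carry out this induction on $p$ carefully, using the triangular structure of $U_{p+1}$. This is also where one checks that the non-corner entries of $\rho$ cannot be chosen to shift the $T_{i_p}$-evaluation by anything outside $J_I$.
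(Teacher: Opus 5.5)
The paper does not prove this statement; it quotes it from Porter, so your sketch has to stand on its own. The parts that give definedness and produce one element of the Massey product are a reasonable standard route: the unipotent representation built from truncated Magnus expansions, followed by Fox calculus on the relator $[x_{i_p},\lambda_{i_p}]$.

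The gap is in your first step, and it breaks the word ``only'' in the statement. Once you replace the family $\{S^3\smallsetminus L_{i_1},\dots,S^3\smallsetminus L_{i_p}\}$ by the single space $E=S^3\smallsetminus L$, every entry $m_{a,b}$ of a defining system may be changed by an arbitrary cocycle on $E$. The ordinary Massey product on $E$ is then in general \emph{not} a single element modulo $J_I$.

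For example, take $n=4$ and $I=(123)$, and let $L$ be a Hopf link $L_1\cup L_4$ split from a trivial link $L_2\cup L_3$. Then $J_I=0$, so $R=\Z$. The relative product is $\{0\}$. However, the indeterminacy $u_1\cup H^1(E)+H^1(E)\cup u_3$ of $\langle u_1,u_2,u_3\rangle\subseteq H^2(E;\Z)$ contains $u_1\cup u_4$, obtained by adding a representative of $u_4$ to $m_{2,3}$. This class evaluates to $\pm 1$ on the torus about $L_1$.

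Your inductive indeterminacy step fails for the same reason. Changing $m_{a,b}$ by a multiple of $u_j$ produces shorter Massey products, hence $\bar{\mu}$-invariants, of the sequence obtained from $I$ by collapsing the block $i_a\cdots i_b$ to the single letter $j$. When $j$ is not among $i_a,\dots,i_b$, that sequence is not a subsequence of $I$, and its invariant need not lie in $J_I$.

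The missing idea is exactly what the relative formulation provides. The cochain $m_{a,b}$ must live on $S^3\smallsetminus(L_{i_a}\cup\cdots\cup L_{i_b})$, whose first cohomology with $R$ coefficients is spanned by $u_{i_a},\dots,u_{i_b}$. So the only admissible modifications collapse a block of $I$ to one of its \emph{own} letters. That does give a subsequence of $I$, whose invariant lies in $J_I$ by induction, and this is what kills the indeterminacy.

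To use this you must keep the relative structure throughout. In particular, you must check that your Magnus-coefficient defining system descends: the $(a,b+1)$ entry of $\rho$ must factor through $\pi_1$ of the complement of $L_{i_a}\cup\cdots\cup L_{i_b}$. This is plausible, since that entry only involves the letters $i_a,\dots,i_b$. However, it requires comparing the Milnor presentations of the sublinks. It also requires that the integer $\mu$-invariants of cyclic permutations of proper subsequences of $I$ vanish in $R$, not merely their $\bar{\mu}$-classes.

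As written, your argument shows at most that the ordinary Massey product on $E$ is defined and contains $(-1)^p\bar{\mu}_L(I)\,t_{i_1,i_p}$. It does not show that the relative Massey product contains nothing else. That uniqueness is exactly what the paper relies on when it evaluates one convenient defining system in the proof of Theorem~\ref{thm: gamma as Milnor}.
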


\begin{proof}[Proof of Theorem~\ref{thm: gamma as Milnor}]
Let $L=(L_1,L_2,L_3)$ be a $3$-component link with a 
distinguished component.
%$\lk(L_1,L_2)=\lk(L_1,L_3)=0$.
Let $G_1$ be a Seifert surface for $L_1$
and
%disjoint from $L_2$ and $L_3$.  Let 
$G_2$ and $G_3$ be Seifert surfaces for $L_2$ and $L_3$.
%disjoint from $L_1$.
Let $I=(1^k23)$. Note that $G_1$, $G_2$, and $G_3$ determine
%$R:=\Z/J_I$
intersection duals to the meridians $m_1$, $m_2$, and $m_3$ of $L$.

Let $u_i=\PD(G_i)$ be the Poincar\'e dual of $G_i$. For $r=1,\dots,k$, let $F_r$ be a Seifert surface for the derivative $L_{1^r2}$, chosen so the interior of $F_r$ intersects $G_1$ in $L_{1^{r+1}2}$, and write $v_r=\PD(F_r)$. We now construct a defining system for
\[
\left\langle \overbrace{u_1,\dots,u_1}^{k\text{ times}},u_2,u_3\right\rangle.
\]
The entries we prescribe are summarized by the table

\begin{equation*}
\begin{array}{c|ccccccc}
m_{i,j}&j=1&j=2&j=3&\dots&j=k&j=k+1&j=k+2\\\hline
i=1&u_1&0&0&\dots&0&v_{k}&
\\
i=2&&u_1&0&\dots&0&v_{k-1}&m_{2,k+2}
\\
i=3&&&u_1&\dots&0&v_{k-2}&m_{3,k+2}
\\
\vdots&&&&\ddots
\\
i=k&&&&&u_1&v_1&m_{k,k+2}
\\
i=k+1&&&&&&u_2&m_{k+1,k+2}
\\
i=k+2&&&&&&&u_3
\end{array}
\end{equation*}

The fact that $m_{i,j}=0$ when $i<j<k+1$ follows from the identity
\[
u_1\cup u_1=\PD(G_1\cap G_1)=0.
\]
The equality $m_{k,k+1}=v_1$ follows since
\[
u_1\cup u_2=\PD(G_1\cap G_2)=\PD(L_{12})
\qquad\text{and}\qquad
\delta(v_1)=\PD(\partial F_1)=\PD(L_{12}).
\]
More generally, for $r\ge 2$ we have $\partial F_r=L_{1^r2}=G_1\cap F_{r-1}$, hence
\[
\delta(v_r)=\PD(\partial F_r)=\PD(G_1\cap F_{r-1})=u_1\cup v_{r-1},
\]
so the remaining entries in the column $j=k+1$ satisfy the defining-system equation.

For the last column, since $\bar{\mu}_L(23)=\lk(L_2,L_3)\in J_I$, the cup product $u_2\cup u_3$ vanishes in $H^2(S^3\smallsetminus (L_2\cup L_3);R)$. Hence we may choose $m_{k+1,k+2}$ with
\[
\delta(m_{k+1,k+2})=u_2\cup u_3.
\]
Now suppose inductively that $m_{r,k+2}$ has been chosen for all $r>i$, where $2\le i\le k$.
Then
\[
u_1\cup m_{i+1,k+2}+v_{k-i+1}\cup u_3
\]
is a cocycle representing the shorter Massey product
\[
\left\langle \overbrace{u_1,\dots,u_1}^{k-i+1\text{ times}},u_2,u_3\right\rangle.
\]
By Theorem~\ref{thm: Porter}, this class equals
\[
(-1)^{k-i+3}\bar{\mu}_L(1^{k-i+1}23)\,t_{1,3}.
\]
Since $\bar{\mu}_L(1^{k-i+1}23)\in J_I$, it vanishes in $H^2(S^3\smallsetminus L;R)$. Hence we may choose $m_{i,k+2}$ with
\[
\delta(m_{i,k+2})=u_1\cup m_{i+1,k+2}+v_{k-i+1}\cup u_3.
\]
This completes the defining system.

Therefore the Massey product contains the class represented by
\[
u_1\cup m_{2,k+2}+v_k\cup u_3.
\]
Appealing again to Theorem~\ref{thm: Porter}, to compute $\bar{\mu}_L(1^k23)$ it suffices to evaluate
\[
u_1\cup m_{2,k+2}+v_k\cup u_3
\]
on $[T]$, the class in $H_2$ of the torus about $L_3$.
To do so, represent $m_{2,k+2}\in C^1(S^3\smallsetminus L)$ as $\PD(H)$ for some sum of surfaces $H$ in $S^3\smallsetminus L$.
Then $u_1\cup m_{2,k+2}=\PD(G_1\cap H)$. Since $G_1$ is disjoint from $L_3$, it follows that
\[
(u_1\cup m_{2,k+2})[T]=0.
\]
%and $G_1\cap H$ is a sum of ribbon, circle, and clasp intersections
%The ribbon and circle intersections do not contribute to $\bigl(u_1\cup m_{2,k+2}\bigr)[T]$,
%Each clasp intersection contributes in a way controlled by the assumption $\lk(L_1,L_3)=0$.Thus $\bigl(u_1\cup m_{2,k+2}\bigr)[T]=0$.
%so that $\bigl(u_1\cup m_{2,k+2}\bigr)[T]$ is the signed count of the clasp intersections.  Thus $\bigl(u_1\cup m_{2,k+2}\bigr)[T]=\lk(L_1,L_3)$, which is assumed to be zero.
On the other hand, $v_k\cup u_3 = \PD(F_k\cap G_3)$ and $(v_k\cup u_3)[T]$ returns the signed count of intersections between $F_k$ and $L_3$, so
\[
\bigl(v_k\cup u_3\bigr)[T]= \lk(L_{1^k2},L_3)=\gamma^k(L),
\]
which completes the proof.
\end{proof}

\section{Classification of links up to weak cobordism}\label{sect: classify}

Our next goal is to classify links up to weak cobordism. Since this classification depends in interesting ways on the category, we make that distinction explicit here. We say that links $L$ and $L'$ are \emph{smoothly} or \emph{topologically} weakly cobordant if the annulus $A$ and the surfaces $Y_2$ and $Y_3$ in Definition~\ref{defn:weak cob} are smoothly embedded or locally flatly embedded, respectively. We say that they are \emph{continuously} weakly cobordant if these surfaces are merely continuously embedded. As noted in Remark~\ref{rmk: what if I-equiv}, the $\beta$- and $\gamma$-invariants obstruct continuous weak cobordism. In this section, we construct a complete set of obstructions to smooth, topological and continuous weak cobordism.

It is clear that if $L=(L_1,L_2,L_3)$ is a $3$-component link, then the concordance class of $L_1$, Cochran's $\beta$-invariants, and the $\gamma$-invariants are all invariants of weak cobordism. There is one additional piece of data. Suppose $L$ and $L'$ are weakly cobordant, and $A$, $Y_2$, and $Y_3$ are as in Definition~\ref{defn:weak cob}. Then the lifts $\widetilde{Y_i}$, for $i=2,3$, induce an equivalence between
$[\widetilde{L_i}]$ and $[\widetilde{L_i'}]$ in $H_1(\widetilde{E(A)})$, where
\[
E(A):=S^3\times[0,1]\smallsetminus A,
\]
and $\widetilde{E(A)}$ denotes the infinite cyclic cover of $E(A)$. We will show that, together with the $\beta$- and $\gamma$-invariants, this observation yields a complete classification of links up to weak cobordism.

Let $G$ be a Seifert surface for $L_1$ disjoint from $L_2$ and $L_3$. Choosing a lift of
$S^3\smallsetminus G$ to the infinite cyclic cover of $S^3\smallsetminus L_1$ determines preferred
lifts $\widetilde{L_2}$ and $\widetilde{L_3}$.

\begin{definition}\label{defn: very weak cobordism}
Let $(L,G)$ and $(L',G')$ be $3$-component links with distinguished components, where $G$ and $G'$ are Seifert surfaces for $L_1$ and $L_1'$, respectively, disjoint from the other components. Let $\widetilde{L_i}$ and $\widetilde{L_i'}$ denote the preferred lifts determined by chosen lifts of $S^3\smallsetminus G$ and $S^3\smallsetminus G'$, respectively. Let $n\in \Z$.  We say that $(L,G)$ and $(L',G')$ are \emph{smoothly}, \emph{topologically}, or \emph{continuously very weakly $n$-cobordant} if the following hold:
\begin{enumerate}
\item \label{item: very weak cob - concordance}
$L_1\times\{0\}$ and $L_1'\times\{1\}$ cobound a smoothly embedded, locally flatly embedded, or continuously embedded annulus $A$ in $S^3\times[0,1]$, respectively.

\item \label{item: very weak cob - homology}
In $H_1(\widetilde{E(A)})$,
\[
[\widetilde{L_2}]=t^{n_2}[\widetilde{L_2'}]
\qquad\text{and}\qquad
[\widetilde{L_3}]=t^{n_3}[\widetilde{L_3'}],
\]
where $n_2-n_3=n$.
\end{enumerate}
Moreover, we say that $(L,G)$ and $(L',G')$ are \emph{very weakly cobordant} if they are very weakly $n$-cobordant for some integer $n$.
\end{definition}

From now on, we write \textsc{CAT} to denote one of the smooth, topological, or continuous categories. It follows directly from Definition~\ref{defn:weak cob}  that, if $L$ and $L'$ are weakly cobordant via surfaces $A$, $Y_2$, and $Y_3$, then, for any Seifert surfaces $G$ and $G'$ for the distinguished components, disjoint from the other components, the preferred lifts determined by chosen lifts of $S^3\smallsetminus G$ and $S^3\smallsetminus G'$ satisfy
\[
[\widetilde{L_i}]=t^{n_i}[\widetilde{L_i'}]
\qquad\text{for } i=2,3
\]
in $H_1(\widetilde{E(A)})$ for some integers $n_2,n_3$. Hence $(L,G)$ and $(L',G')$ are very weakly $n$-cobordant, where $n=n_2-n_3$. The following theorem shows that very weak cobordism, together with the $\beta$- and $\gamma$-invariants, gives a complete set of invariants for $3$-component links up to weak cobordism. Recall that $\sim$ denotes the equivalence relation on $\Z^\infty$ generated by
\[
a\sim (T+\mathrm{Id})(a).
\]

% It follows immediately from the definition of weak cobordism since the cobordism $Y_2$ and $Y_3$ lift to the infinite cyclic cover of the complement of $A$, that if $L$ and $L'$ are weakly cobordant, then for any choice of Seifert surfaces $G$ and $G'$ for the distinguished components, disjoint from the other components, the pairs $(L,G)$ and $(L',G')$ are very weakly $n$-cobordant for some integer $n$.\footnote{\JP{perhaps we can give a one line justification? something likne since the lifts represent the same class?}}

\begin{theorem}\label{thm:classify}
Let $\CAT$ denote one of the smooth, locally flat, or continuous categories.
Let $L=(L_1,L_2,L_3)$ and $L'=(L_1',L_2',L_3')$ be links with distinguished components, and let $G$ and $G'$ be Seifert surfaces for those distinguished components. Then $L$ and $L'$ are $\CAT$-weakly cobordant if and only if there exists $n\in \Z$ such that the following hold:
\begin{enumerate}
\item \label{item very weak cob}
$(L,G)$ and $(L',G')$ are $\CAT$ very weakly $n$-cobordant.

\item \label{item same beta}
$\beta(L_1,L_2)=\beta(L'_1,L_2')$ and $\beta(L_1,L_3)=\beta(L'_1,L_3')$.

\item \label{item same gamma}
$\gamma(L,G)=(T+\mathrm{Id})^n\gamma(L',G') \in\Z^\infty$.
\end{enumerate}
\end{theorem}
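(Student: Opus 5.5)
The forward direction is essentially formal. Suppose $L$ and $L'$ are $\CAT$-weakly cobordant via $(A,Y_2,Y_3)$. By the discussion preceding the theorem, the lifts $\widetilde{Y_i}$ give $[\widetilde{L_i}]=t^{n_i}[\widetilde{L_i'}]$ in $H_1(\widetilde{E(A)})$, so \pref{item very weak cob} holds with $n=n_2-n_3$. Condition \pref{item same beta} follows because forgetting $L_3$ (resp.\ $L_2$) leaves a $2$-component weak cobordism in Cochran's sense, and $\beta$ is an invariant of that, in the continuous category as well by Remark~\ref{rmk: what if I-equiv}. For \pref{item same gamma}, Proposition~\ref{prop h is invariant}\pref{item: weak cob} gives
\[
\noteta(L_1,\widetilde{L_2},\widetilde{L_3})=\noteta(L_1',t^{n_2}\widetilde{L_2'},t^{n_3}\widetilde{L_3'}),
\]
and Proposition~\ref{prop h is invariant}\pref{choice of lifts} rewrites the right side as $t^{n}\noteta(L_1',\widetilde{L_2'},\widetilde{L_3'})$. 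Taking the preferred lifts from $G$ and $G'$ and applying Proposition~\ref{thm: h is gamma} to both sides gives $\sum\gamma^k(L,G)x^k=(1+x)^n\sum \gamma^k(L',G')x^k$. Multiplication by $1+x$ acts on coefficient sequences as $T+\Id$, so this is exactly \pref{item same gamma}. For $n<0$ the same conclusion holds with $(T+\Id)^{-1}$, which is meaningful since $(1+x)$ is invertible in $\Z[[x]]$.

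For the converse, I would construct the surfaces $Y_2,Y_3$ in the complement of the given annulus $A$. Condition \pref{item very weak cob} shows $[\widetilde{L_i}]-t^{n_i}[\widetilde{L_i'}]=0$ in $H_1(\widetilde{E(A)})$. So $\widetilde{L_i}-t^{n_i}\widetilde{L_i'}$ bounds a $2$-chain in $\widetilde{E(A)}$. I would represent it by an immersed surface, project it down to $E(A)$, and remove self-intersections by tubing along arcs in the cover, so that the projection lies in $E(A)$ and lifts. Then $L_2$ and $L_2'$ cobound an embedded surface $Y_2^0$ in $E(A)$ that lifts, and similarly for $Y_3^0$. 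In the continuous category I would work in the smooth open manifold $E(A)$, as in Remark~\ref{rmk: what if I-equiv}. What remains is to remove the intersections of $Y_2^0$ with itself and with $Y_3^0$; these sit in a $4$-manifold, so they are isolated points. Self-intersections of $Y_2^0$ can be removed by tubing along $L_2$-meridians, after first pairing up lifted intersections.

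The main obstacle is the intersections between $Y_2^0$ and $Y_3^0$. In the cover, $\widetilde{Y_2^0}$ and $t^k\widetilde{Y_3^0}$ have an equivariant algebraic intersection number $\sum_k(\widetilde{Y_2^0}\cdot t^k\widetilde{Y_3^0})t^k$. Only when this vanishes can each translate's intersections be cancelled in pairs by tubing or Whitney-type moves, without reintroducing lifting problems. I would prove that this equivariant count equals
\[
\noteta(L_1,\widetilde{L_2},\widetilde{L_3})-t^{n}\noteta(L_1',\widetilde{L_2'},\widetilde{L_3'})
\]
after multiplying by $\Delta\Delta'$. The computation is the same one as in the proof of Proposition~\ref{prop h is invariant}, with $\widetilde{Y_2^0}$ now meeting $\widetilde{Y_3^0}$, using the vanishing of the intersection form from \cite[Lemma~5]{KojYam79}. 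Condition \pref{item same gamma} together with Proposition~\ref{thm: h is gamma} makes this difference zero. The $\beta$ hypothesis plays the analogous role for the self-intersections: by Cochran's classification \cite{C3}, it lets $Y_2$ and $Y_3$ be made embedded individually. Pairs of intersection points of $Y_2^0$ and $Y_3^0$ with opposite sign in the same translate would then be removed by tubing $Y_3^0$ to itself along an arc of $Y_2^0$. This is a 4-dimensional cancellation that needs no Whitney disks, because we allow the genus of $Y_3$ to grow.

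Finally, I would check condition \pref{item: weak cob - lifting} for the resulting surfaces; this holds because the tubes are taken along lifts. I would also verify the smooth and locally flat versions, where the construction stays inside the category of $A$.
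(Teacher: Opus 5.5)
Your forward direction matches the paper's, and so does your treatment of the intersections between $Y_2$ and $Y_3$. There, the vanishing pairing of \cite[Lemma~5]{KojYam79} identifies $\sum_k(\widetilde{Y_2},t^k\widetilde{Y_3})t^k$ with $h(L_1,\widetilde{L_2},\widetilde{L_3})-t^{n}h(L_1',\widetilde{L_2'},\widetilde{L_3'})$. Condition (3) together with Proposition~\ref{thm: h is gamma} makes this zero, and same-translate pairs are then tubed away.

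The gap is in the self-intersections. Your first step claims that condition (1) alone lets you tube an immersed null-homology into an embedded lifting surface $Y_2^0\subset E(A)$. This is false. A double point of the projection coming from $\widetilde{Y_2}\cap t^k\widetilde{Y_2}$ with $k\neq 0$ can only be cancelled against one of opposite sign with the same $k$. Otherwise the tube adds a loop with nonzero image in $H_1(E(A))\cong\Z$. These signed counts are a genuine obstruction, and if your step worked, (1) would imply (2). For instance, take $L_1=L_1'$ an unknot and $A$ the product annulus. Then $H_1(\widetilde{E(A)})=0$, so (1) holds for any $L_2,L_2'$. Now let $(L_1,L_2)$ be the Whitehead link, $L_2'$ split from $L_1'$, and $L_3=L_3'$ a distant unknot. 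You would obtain a weak cobordism from the Whitehead link to the unlink, contradicting $\beta^1=\pm1$ for the Whitehead link.

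Your later patch, that by ``Cochran's classification'' the $\beta$ hypothesis lets $Y_2$ and $Y_3$ be made embedded, does not close this. What you need is an embedded lifting surface in the complement of the given annulus $A$, with the given boundary lifts. That is exactly the two-component version of the statement you are proving. Moreover, no classification by $\beta$ alone is available, by Theorem~\ref{thm:notweaklytotheunlink}.

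The missing step, which the paper supplies, is to run your intersection-form computation a second time, pairing each $Y_i$ with itself. By \cite[Theorem~7.1]{C3}, equality of $\beta$-invariants gives $\eta(L_1,L_i)=\eta(L_1',L_i')$. Build the $2$-cycle from null-homologies of $p(t)\widetilde{L_i}$ and $p'(t)\widetilde{L_i'}$ together with $Y_i$. Pairing it with translates of a push-off of $Y_i$ that restricts to the $0$-framed push-offs of $L_i$ and $L_i'$ shows $(\widetilde{Y_i},t^k\widetilde{Y_i})=0$ for all $k\neq 0$. The remaining $k=0$ double points can be cancelled after adding local kinks if necessary. Only once all these equivariant counts vanish can the intersections among all translates of $Y_2$ and $Y_3$ be tubed away, after which the projections give the weak cobordism.
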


%\begin{theorem}\label{thm: classification}
%Let $L=(L_1,L_2,L_3)$ and $L'=(L_1',L_2',L_3')$ be 3-component links, $G_1$ and $G_1'$ be Seifert surfaces for $L_1$ and $L'_1$ disjoint from the other components.  $L$ is CAT-weakly cobordant to $L'$ if and only if there is some $n\in \N$ so that
%\begin{enumerate}
%\item $(L,G)$ is CAT-very weakly $n$-cobordant to $(L',G')$,
%\item For $i=2,3$ $\beta(L_1,L_i)=\beta(L_1',L_i')$ and
%\item For all $k\in \N$, $\gamma(L,G_1)=(T+\mathrm{Id})^n\gamma(L',G_1')$.
%\end{enumerate}
%\end{theorem}

%We want to explore what happens in the smooth, locally flat, and topological category.  We say that $L$ is (smoothly/topologically) weakly cobordant or very weakly cobordant to $L'$ if and annuli $A$ and surfaces $Y_i$ in these definitions are (smooth/locally flat) embedded.  We say that $L$ is weakly I cobordant to $L$ if instead they are just embedded (recall that a continuous function is called an embedding if it is a homeomorphism onto its image).  According to Remark~\ref{rmk: what if I-equiv}

% We are now ready to state our main theorem.  

%\chris{Add thing saying beta and gamma are not enough after this.}

Note that the theorem above indicates that the difference between smooth, topological and continuous weak cobordism is captured by the difference between smooth, topological and continuous very weak cobordism.  When the Alexander polynomials of $L_1$ and $L_1'$ are trivial, then we get the following consequence.
%As mentioned earlier, the fact that the surfaces $Y_2$ and $Y_3$ lift to the infinite cyclic cover of the complement of $A$ imposes a constraint on weakly cobordant links. When the Alexander module vanishes, this constraint yields a particularly satisfying classification. Some of the ideas appearing in the proof of this corollary will also play a key role in the proof of Theorem~\ref{thm:classify}.
%As mentioned earlier, the fact that the surfaces $Y_2$ and $Y_3$ lift to the infinite cyclic cover of the complement of $A$ imposes a constraint on weakly cobordant links. When the Alexander module vanishes, this constraint yields a particularly satisfying classification. Some of the ideas appearing in the proof of this corollary will also play a key role in the proof of Theorem~\ref{thm:classify}.

% The first comes from the observation that the surfaces $Y_2$ and $Y_3$ lift to induce equivalences in the first homology of the infinite cyclic cover of $A$, that is, in its Alexander module.

\begin{repcorollary}{cor:alexander-one}
Let $L=(L_1,L_2,L_3)$ and $L'=(L_1',L_2',L_3')$ be links with a distinguished component. Suppose that $L_1$ and $L_1'$ have trivial Alexander polynomial. Then the following are equivalent:
\begin{enumerate}
\item \label{item: same invts}
$\beta(L_1,L_2)=\beta(L_1',L_2')$, $\beta(L_1,L_3)=\beta(L_1',L_3')$, and
$\gamma(L)=\gamma(L')\in\Z^\infty /\sim $.

\item \label{item: top weak cob}
$L$ and $L'$ are continuously weakly cobordant.

\item \label{item: LF weak cob}
$L$ and $L'$ are locally flat weakly cobordant.
\end{enumerate}
If we additionally assume that $L_1$ and $L_1'$ cobound a smoothly embedded annulus in $S^3\times[0,1]$, then these are further equivalent to:
\begin{enumerate}\setcounter{enumi}{3}
\item \label{item: smooth weak cob}
$L$ and $L'$ are smoothly weakly cobordant.
\end{enumerate}
\end{repcorollary}

\begin{proof}
The implication~\eqref{item: LF weak cob} $\Rightarrow$ \eqref{item: top weak cob} is obvious, and \eqref{item: top weak cob} $\Rightarrow$ \eqref{item: same invts} follows from Theorem~\ref{thm:classify}.
%is the content of \cite[Corollary~5.2]{C2} and Corollary~\ref{corollary: well defined}.
We must prove that \eqref{item: same invts} $\Rightarrow$ \eqref{item: LF weak cob}. Also using Theorem~\ref{thm:classify}, it suffices to prove that, for every $n\in \Z$ and any Seifert surfaces $G$ and $G'$, the pairs $(L,G)$ and $(L',G')$ are topologically very weakly $n$-cobordant. %If we additionally assume that $L_1$ and $L_1'$ are smoothly concordant, then we will show that they are smoothly very weakly $n$-cobordant.
Item~\pref{item: smooth weak cob} will be addressed at the end of the proof. 

We have assumed that both $L_1$ and $L_1'$ have trivial Alexander polynomial. Then each of $L_1$ and $L_1'$ bounds a locally flat embedded disk in $B^4$ whose complement has fundamental group $\Z$ \cite[Theorem 1.13]{Freedman1}.  See also \cite[Theorem 11.7B]{Freedman-Quinn:1990-1}, and \cite[Appendix A]{Garoufalidis-Teichner:2004-1}. 
%each bounds a locally flat disk in a copy of $B^4$ whose complement has fundamental group $\Z$.
Taking the connected sum of these two copies of $B^4$ along these two disks produces a locally flat annulus $A$ in $S^3\times[0,1]$ bounded by $L_1\times\{0\}$ and $L_1'\times\{1\}$.

Choose lifts of $S^3\smallsetminus G$ and $S^3\smallsetminus G'$ to the corresponding infinite cyclic covers, and let
$\widetilde{L_2}$, $\widetilde{L_3}$, $\widetilde{L_2'}$, and $\widetilde{L_3'}$
be the resulting preferred lifts. Since the Alexander polynomials of $L_1$ and $L_1'$ are trivial, we have
\[
[\widetilde{L_i}]=0\in H_1(\widetilde{S^3\smallsetminus L_1})
\qquad\text{and}\qquad
[\widetilde{L_i'}]=0\in H_1(\widetilde{S^3\smallsetminus L_1'})
\]
for $i=2,3$. In particular, in $H_1(\widetilde{E(A)})$, we have
\[
[\widetilde{L_i}]=t^{n_i}[\widetilde{L_i'}]
\]
for any integer $n_i$. Thus, for every $n\in\Z$, the pairs $(L,G)$ and $(L',G')$ are topologically very weakly $n$-cobordant. An appeal to Theorem~\ref{thm:classify} completes the proof that \eqref{item: same invts} implies \eqref{item: LF weak cob}.

It remains to prove the smooth statement. Assume that $L_1$ and $L_1'$ cobound a smoothly embedded annulus. Then the same argument applies with this smoothly embedded annulus in place of the locally flat annulus above.
\end{proof}

\begin{proof}[Proof of Theorem~\ref{thm:classify}]
Let $L$ and $L'$ be $\CAT$-weakly cobordant, and let $G$ and $G'$ be Seifert surfaces for $L_1$ and $L_1'$, respectively. For $i=2,3$, let $\widetilde{L_i}$ and $\widetilde{L_i'}$ be the preferred lifts determined by chosen lifts of $S^3\smallsetminus G$ and $S^3\smallsetminus G'$, respectively. Let $A$, $Y_2$, and $Y_3$ be the surfaces of Definition~\ref{defn:weak cob}. The lift $\widetilde{Y_i}$ gives an equivalence in $H_1(\widetilde{E(A)})$ from $\widetilde{L_i}$ to $t^{n_i}\widetilde{L_i'}$. In particular, this means that $(L,G)$ and $(L',G')$ are very weakly $n$-cobordant, where $n=n_2-n_3$.

By \cite[Corollary~5.2]{C2}, $L$ and $L'$ have the same $\beta$-invariants. To show the claimed equality of the $\gamma$-invariants, we first appeal to Proposition~\ref{prop h is invariant} to see that
\[
\noteta(L_1,\widetilde{L_2},\widetilde{L_3})
=
\noteta(L_1',t^{n_2}\widetilde{L_2'},t^{n_3}\widetilde{L_3'})
=
t^n\noteta(L_1',\widetilde{L_2'},\widetilde{L_3'}).
\]
The equality of the $\gamma$-invariants follows from Proposition~\ref{thm: h is gamma}.

% Now assume that $(L,G)$ and $(L',G')$ are $n$-very weakly cobordant, so there is a CAT embedded annulus $A\subseteq S^3\times[0,1]$ and for $i=2,3$ a surface $Y_2\subseteq \widetilde{S^3\times[0,1]\setminus A}$ bounded by $\widetilde L_i$ and $t^{n_i}\widetilde{L_i'}$ with $n_2-n_3=n$.  

Conversely, assume that conditions~\eqref{item very weak cob}--\eqref{item same gamma} hold. In particular, there is a $\CAT$-embedded annulus
$A\subseteq S^3\times[0,1]$ and, for $i=2,3$, a surface
$Y_i \subseteq \widetilde{E(A)}$ (possibly neither embedded nor smooth) with boundary
\[
\partial Y_i=\widetilde{L_i}-t^{n_i}\widetilde{L_i'},
\]
where $n_2-n_3=n$. We will construct a $\CAT$-weak cobordism from $L$ to $L'$. Since $\widetilde{E(A)}$ is a cover of an open subset of the smooth manifold $S^3\times[0,1]$, it is smooth, even if $A$ is not. For our argument, we will need the surfaces $Y_i$ to be smoothly embedded. Thus we first approximate each $Y_i$ by a smoothly immersed surface.  First, by the Whitney approximation theorem, \cite[Theorem~10.21]{LeeSmoothManifolds} we may replace $Y_i$ by the image of a smooth map.   By \cite[Theorem~2.8]{AdachiEmbeddingsAndImmersions} we can further replace it by an immersion.  In the following paragraph we will replace this immersion by an embedding.
% Logic[CITE Whitney approximation on manifolds, Lee Introduction to smooth manifolds, Theorem 10.21] gets us to a smooth map [Adachi embeddings and immersions, Theorem 2.8] gets to an immersion but the codomain must be R^n. To get this: Project down to S^3\times[0,1]\smallsetminus A. Use product structure near boundary to get immersion near boundary. This embeds in R^4, so approximate it by an immersion. Now lift it back up to the cover.

Let $p(t)\in \Z[t,t^{-1}]$ be a polynomial for which
$p(t)\widetilde{L_2}$ bounds a $2$-chain
$X\in C_2(\widetilde{S^3\smallsetminus L_1})$. Choose
$p'(t)$ and $X'\in C_2(\widetilde{S^3\smallsetminus L_1'})$
similarly for $\widetilde{L_2'}$. It follows that
\[
p'(t)X-p(t)p'(t)Y_2-p(t)t^{n_2}X' \in H_2(\widetilde{E(A)}).
\]
As discussed in Remark~\ref{rmk: what if I-equiv}, the intersection form vanishes, and hence, for any $k\in \Z$,
\[
\bigl(p'(t)X-p(t)p'(t)Y_2-p(t)t^{n_2}X',\, t^kY_3\bigr)=0.
\]
By summing over all $k$, we obtain
\[
\sum_{k\in \Z} (Y_2,t^kY_3)t^k
=
\noteta(L_1,\widetilde{L_2},\widetilde{L_3})
-
\noteta(L_1',t^{n_2}\widetilde{L_2'},t^{n_3}\widetilde{L_3'})=\noteta(L_1,\widetilde{L_2},\widetilde{L_3})
-
t^{n_2-n_3}\noteta(L_1',\widetilde{L_2'},\widetilde{L_3'}).
\]
Next, appealing to Proposition~\ref{thm: h is gamma} and the assumed equality of the $\gamma$-invariants, we conclude that
\[
\noteta(L_1,\widetilde{L_2},\widetilde{L_3})
=
t^{n_2-n_3}\noteta(L_1',\widetilde{L_2'},\widetilde{L_3'}).
\]
Therefore $(Y_2,t^kY_3)=0$ for every $k\in\Z$. Similarly, $(Y_3,t^kY_2)=0$ for every $k\in\Z$. By \cite[Theorem~7.1]{C3} and the assumed equality of the $\beta$-invariants, $L$ and $L'$ have identical $\eta$-invariants. A similar argument then shows that
\[
(Y_2,t^kY_2)=(Y_3,t^kY_3)=0
\]
for every $k\in\Z$.

We now use the vanishing of these algebraic intersections to remove all intersections among the translates of $Y_2$ and $Y_3$. Since $(Y_i,Y_i)=0$, we may pair up oppositely signed self-intersections of $Y_i$ by arcs. Tubing $Y_i$ to itself along these arcs, we arrange that $Y_i$, and hence each translate $t^kY_i$, is smoothly embedded. Similarly, since $(Y_i,t^kY_j)=0$, we can modify $Y_i$ by adding further tubes to arrange that $Y_i$ is disjoint from $t^kY_j$.
By compactness, $Y_i$ intersects $t^kY_j$ for only finitely many values of $k$. Thus, after adding only finitely many tubes, we may arrange that all translates of $Y_2$ and $Y_3$ are smoothly embedded and pairwise disjoint. Consequently, if
\[
\pi:\widetilde{E(A)}\to E(A)=S^3\times[0,1]\smallsetminus A
\]
is the covering map, then $\pi(Y_2)$ and $\pi(Y_3)$ are disjoint smoothly embedded surfaces. Since $Y_2$ and $Y_3$ lift to the infinite cyclic cover, the triple
\[
(A,\pi(Y_2),\pi(Y_3))
\]
is a weak cobordism.
\end{proof}

We close this section with an example showing that, when the Alexander polynomial is nontrivial, topological weak cobordism is not classified by the $\beta$- and $\gamma$-invariants alone. Since we consider a $2$-component link in this example, the $\gamma$-invariants are vacuous. In the proof, we appeal to the solvable filtration of \cite{COT03}; we recall the necessary facts as they arise.

\begin{theorem}\label{thm:notweaklytotheunlink}
There is a $2$-component link $L=(L_1,L_2)$ such that $L_1$ is smoothly concordant to the unknot and $\beta^k(L_1,L_2)=0$ for all $k$, but $L$ is not weakly cobordant to the unlink in the locally flat category.
\end{theorem}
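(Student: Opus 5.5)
Take $L_1$ to be a nontrivial slice knot with nontrivial Alexander polynomial, for instance the ribbon knot $9_{46}$ or $6_1$, with its standard genus one Seifert surface $G$. Choose $L_2$ in the complement of $G$ so that its class in $H_1(S^3\smallsetminus G)$, and hence the class of its lift $[\widetilde{L_2}]$ in the Alexander module $A(L_1)$, is nonzero. A natural choice is a curve that is $0$-framed parallel to a derivative of $L_1$, i.e.\ parallel to a curve $d$ on $G$ with $\lk(d,d^+)=0$. Then all of Cochran's derivatives $L_{1^k2}$ can be computed from the Seifert matrix, as in Proposition~\ref{thmbody:seifert-matrix}. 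We arrange that $\beta^k(L)=0$ for all $k$, either by choosing the class so that the Seifert form vanishes on the iterated images, or by replacing $L_2$ with a band sum of two oppositely oriented such curves that cancel $\beta$ but not the Alexander module class. Since $\beta(L)$ is determined by $\eta(L)=\sum_k \beta^k (t-1)^{2k}$-type data (\cite[Theorem~7.1]{C3}), it is enough to make $\eta(L_1,L_2)=0$. This holds when the lift of $L_2$ is self-linking-zero in the cover, as it is for a curve lying on a lift of a metabolizer.

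By the proof of Theorem~\ref{thm:classify}, a locally flat weak cobordism to the unlink gives a locally flat annulus $A$ from $L_1$ to the unknot $U$. Capping $U$ with a disk gives a locally flat slice disk $\Delta$ for $L_1$ in $B^4$. The lift of $Y_2$, together with a surface bounded by the lift of the unknot, shows that $[\widetilde{L_2}]$ maps to zero in $H_1(\widetilde{B^4\smallsetminus\Delta})$. So the key claim is that $[\widetilde{L_2}]$ lies in the kernel $P$ of $A(L_1)\to H_1(\widetilde{B^4\smallsetminus \Delta})$ for every locally flat slice disk $\Delta$.

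To obstruct this, we use the solvable filtration of \cite{COT03}. The kernel $P$ is a Lagrangian for the Blanchfield form ($P=P^\perp$). For each such $P$ there is a character on $A(L_1)$ vanishing on $P$, and the corresponding $L^{(2)}$ or Casson--Gordon-type signature defect of the zero surgery on $L_1$ must vanish. The plan is to realize $L_1$ as a satellite, infecting a ribbon knot $R$ along curves $\eta_1,\eta_2$ representing generators of the two possible metabolizers by knots $J_1,J_2$ with large Cheeger--Gromov $\rho$-invariant. We do this so that $L_1$ is still smoothly slice, via a ribbon disk whose kernel contains the class of $\eta_1$ only. If a locally flat disk $\Delta$ had $[\widetilde{L_2}]=[\widetilde{\eta_2}]\in P$, then $P$ would be the metabolizer generated by $\eta_2$, so $\eta_1\notin P$. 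The COT argument then shows that the $\rho$-invariant of $J_1$ must vanish, which is a contradiction. So $L_2$ is taken to be a parallel of $\eta_2$ disjoint from $G$. This kind of link, a doubly infected ribbon knot, is standard in the literature, which is why we expect the signature step to go through.

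The main obstacle is making two things compatible: the smooth sliceness of $L_1$ (through the $\eta_1$ side), the vanishing of $\beta$, and the order-two COT obstruction showing that any disk killing $\eta_2$ must see $\rho(J_1)$. I would first try to check that $\eta(L_1,L_2)=0$ by having $\eta_2$ lie in a metabolizer on which the Blanchfield form vanishes identically. For the signature obstruction, I would apply the computation of \cite{COT03} to the $(1)$-solution $B^4\smallsetminus\Delta$, with the coefficient system factoring through $A(L_1)/P$.
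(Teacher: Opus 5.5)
\textbf{There is a genuine gap: as written, your $L_1$ is not slice at all.} You infect $R$ along both $\eta_1$ and $\eta_2$ by knots $J_1,J_2$ with large $\rho_0$. The argument you yourself invoke then shows that no locally flat slice disk exists. Take $R=9_{46}$. For any such disk, the kernel $P$ is a Lagrangian of $\mathbb{Q}[t^{\pm1}]/(2t-1)\oplus\mathbb{Q}[t^{\pm1}]/(t-2)$, so it is generated either by $\eta_1$ or by $\eta_2$. The curve not in $P$ then contributes its $\rho_0$ to a signature defect that must vanish. In particular, the ribbon disk you want, with kernel generated by $\eta_1$, would force $\rho_0(J_2)=0$. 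This doubly infected knot is exactly the knot $R(J)$ that the paper uses as the \emph{contradiction}: by \cite[Theorem~9.5]{CHL09} it is not even $1.5$-solvable. So the hypothesis that $L_1$ be smoothly concordant to the unknot fails. The ``compatibility'' you flag as the main obstacle is an actual impossibility, not a technicality.

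\textbf{The repair.} Infect along only one curve. Then $L_1=R_{\eta_1}(J_1)$ is ribbon: cut the band not tied into $J_1$, and that disk kills $\eta_1$. Take $L_2=\eta_2$; up to relabeling this is the paper's link $(L,\eta)$.

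For $\beta$, do not rely on $\eta_2$ lying in a metabolizer. That only makes the Blanchfield self-linking vanish modulo $\mathbb{Z}[t^{\pm1}]$, not $\eta(L_1,L_2)$ in $\mathbb{Q}(t)$. Instead, note that infection changes neither the Seifert matrix nor the class of $\eta_2$. For $9_{46}$ the iterated derivatives stay multiples of one band core, whose Seifert self-linking is $0$, so every $\beta^k$ vanishes.

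\textbf{Comparison with the paper.} With these changes your obstruction goes through, and it genuinely differs from the paper's. You use only that $\widetilde{L_2}$ dies in $H_1$ of the infinite cyclic cover of the disk exterior. This pins $P=\langle\eta_2\rangle$, so $\eta_1\notin P$. Writing $M_K$ for zero surgery,
\[
0=\rho(M_{L_1},\phi_P)=\rho(M_R,\phi_P)+\rho_0(J_1)=\rho_0(J_1).
\]
The middle term vanishes because $R$ has a ribbon disk with kernel exactly $P$, so you only need $\int\sigma_{J_1}\neq 0$.

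The paper instead extracts the stronger fact $[\eta]\in\pi_1(B^4\smallsetminus D)^{(2)}$, using that $Y_2$ lifts to the infinite cyclic cover. It then applies \cite[Proposition~6.1]{DavParHarPreprint} to conclude that $R(J)$ would be $2$-solvable, contradicting \cite[Theorem~9.5]{CHL09}. That outsources the signature analysis to existing theorems, at the cost of requiring $J$ to be a large even sum of trefoils. Your repaired route is a self-contained first-order argument.
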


Since smooth weak cobordism implies topological weak cobordism, the examples above also apply in the smooth category.  We do not know whether the $\beta$- and $\gamma$-invariants alone are enough to classify continuous weak cobordism.  See Problem~\ref{problem: classify topological weak cobordism}.

\begin{proof}[Proof of Theorem~\ref{thm:notweaklytotheunlink}]
Consider the knot $R(J)$ in Figure~\ref{fig:not2-solvable}. This knot arises from the satellite construction with pattern given by the doubling operator $(R,\eta_1,\eta_2)$ in Figure~\ref{fig:doublingOp}. This doubling operator satisfies all of the hypotheses of \cite[Theorem~9.5]{CHL09}, and hence $R(J)$ is not $1.5$-solvable, provided that $J$ is a connected sum of a sufficiently large even number of trefoils.

\begin{figure}[!htbp]
     \centering
     \hfill
     \begin{subfigure}[t]{0.28\textwidth}
     \centering
         \begin{tikzpicture}
         \node at (0,0){\includegraphics[width=.9\textwidth]{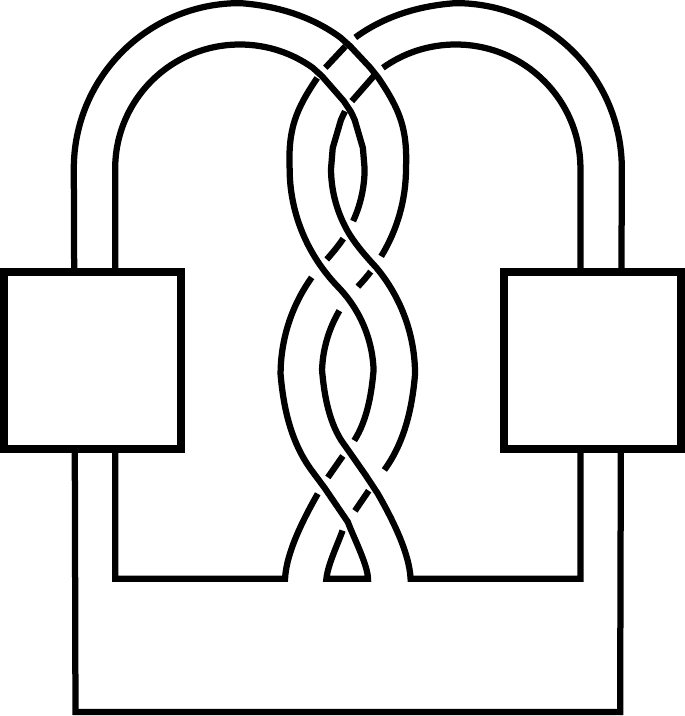}};
         \node[below] at (-1.5,.3) {$J$};
         \node[below] at (1.5,.3) {$J$};
         \end{tikzpicture}
         \caption{The knot $R(J)$.}\label{fig:not2-solvable}
         \end{subfigure}
         \hfill
     \begin{subfigure}[t]{0.28\textwidth}
     \centering
         \begin{tikzpicture}
         \node at (0,0){\includegraphics[width=.9\textwidth]{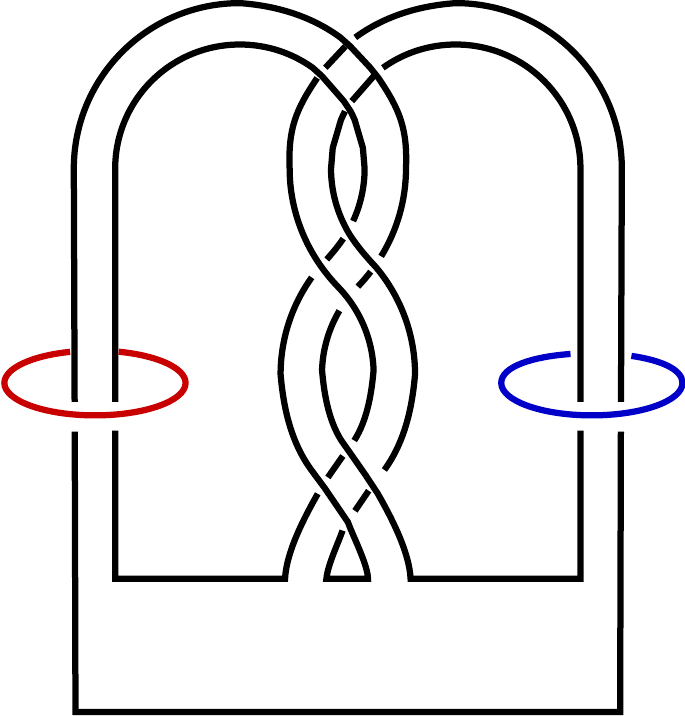}};
         \node at (0,-1.75) {$R$};
         \node[red] at (-1,.15) {$\eta_1$};
         \node[blue] at (1,.15) {$\eta_2$};
         \end{tikzpicture}
         \caption{The operator $(R,\eta_1,\eta_2)$.}\label{fig:doublingOp}
         \end{subfigure}
         \hfill
     \begin{subfigure}[t]{0.28\textwidth}
     \centering
         \begin{tikzpicture}
         \node at (0,0){\includegraphics[width=.9\textwidth]{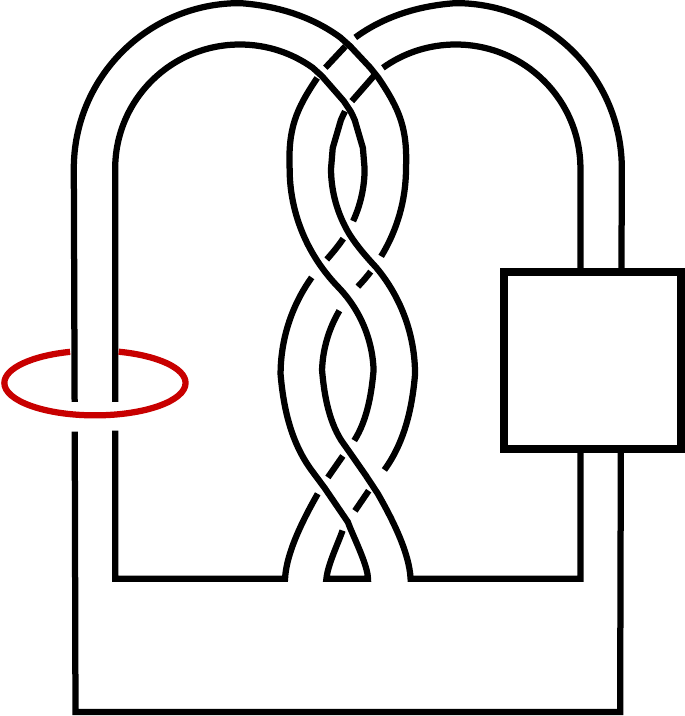}};
         \node[below] at (1.5,.3) {$J$};
         \node at (0,-1.75) {$L$};
         \node[red] at (-1,.15) {$\eta$};
         \end{tikzpicture}
         \caption{The link $(L,\eta)$.}\label{fig:two-component-link}
         \end{subfigure}
         \hfill
     \caption{A $2$-component link with vanishing $\beta$-invariants that is not weakly cobordant to the unlink.}\label{fig:non weakly cob vanishing beta}
\end{figure}

Now consider the link $(L,\eta)$ in Figure~\ref{fig:two-component-link}, where $\eta$ is the red unknotted component. The knot $J$ does not affect the linking numbers used to define the $\beta$-sequence, and hence $\beta^i(L,\eta)=0$ for all $i$, since these invariants vanish for a slice link. However, if $(L,\eta)$ were weakly cobordant to the unlink, then capping the components of a weak cobordism $(A,Y_2)$ with disks bounded by the components of the unlink would produce a disk $D$ in $B^4$ bounded by $L$ such that
\[
[\eta]\in \pi_1(B^4\smallsetminus D)^{(2)}.
\]
Recall that the derived series of a group is defined by $G^{(0)}=G$ and
\(
G^{(k+1)}=[G^{(k)},G^{(k)}].
\)
It then follows from \cite[Proposition~6.1]{DavParHarPreprint} that the knot $R(J)$ is $2$-solvable, and in particular is $1.5$-solvable, which is a contradiction.
\end{proof}

\section{Continuously embedded disks with $\pi_1=\Z$}

\newcommand{\cone}{\operatorname{cone}}%%

Although our main results concern \(3\)-component links, one theme of the paper is that linking data in infinite cyclic covers remains informative even in very weak topological settings. The invariant \(h(L)\), introduced in Section~4 and modeled on the Kojima--Yamasaki \(\eta\)-invariant~\cite{KojYam79}, is perhaps the clearest expression of this viewpoint. In the present section, we combine the same perspective with the Blanchfield pairing to show that dropping the local flatness hypothesis does not enlarge the class of knots admitting an embedded disk in \(B^4\) whose complement has fundamental group \(\mathbb Z\).

Knot concordance is usually studied in either the smooth category or the locally flat category. In these categories, concordance is nontrivial, since not every knot bounds a smooth or locally flat embedded disk in the $4$-ball. This is no longer true without the locally flat condition. Indeed, every knot $K$ bounds an embedded disk $D$, the simplest example being the cone on $K$. Observing that
\[
B^4\smallsetminus \cone(K) \cong (S^3\smallsetminus K)\times [0,\infty),
\]
one sees that the fundamental group of the complement of this disk is isomorphic to the fundamental group of the knot complement. This does not occur in the smooth or locally flat categories in general, and it leads to the following natural question.

\begin{question}
For a fixed knot $K$, what restrictions are there on the fundamental groups of complements of disks in $B^4$ bounded by $K$?
\end{question}

We address this question in the case where the group is $\Z$. It is a famous result of Freedman~\cite{Freedman1, Freedman-Quinn:1990-1, Garoufalidis-Teichner:2004-1} that a knot bounds a locally flat embedded disk $D\subset B^4$ with
$\pi_1(B^4\smallsetminus D)\cong \Z$
if and only if the knot has trivial Alexander polynomial. In this section, we use the Kojima--Yamasaki $\eta$-invariant and its connection with the Blanchfield form to prove that the same is true if the disk is merely continuously embedded,
%one further equivalent condition,
thereby establishing Theorem~\ref{thm:pi1-Z-disk}, whose statement we now recall.  For the interested reader, we remark that all of the results we use apply to knots in homology spheres which bound disks in (possibly non-smooth) homology balls.  Before applying \cite[Lemma~5]{KojYam79}, one may first need to use \cite[Theorem~8.2]{Freedman-Quinn:1990-1} to arrange that the complement of a disk in a homology ball admits a smooth structure, even when the homology ball itself does not.

\begin{reptheorem}{thm:pi1-Z-disk}
Let $K\subseteq S^3$ be a knot. The following are equivalent.
\begin{enumerate}
\item \label{cond: triv Alex poly} $K$ has trivial Alexander polynomial.
\item \label{cond: locally flat pi1=Z disk} $K$ bounds a locally flat embedded disk $D\subseteq B^4$ with $\pi_1(B^4\smallsetminus D)\cong \Z$.
\item \label{cond: pi1=Z disk} $K$ bounds an embedded disk $D\subseteq B^4$ with $\pi_1(B^4\smallsetminus D)\cong \Z$.
\end{enumerate}
\end{reptheorem}

\begin{proof}

As mentioned earlier, Freedman's theorem gives the equivalence of
\pref{cond: triv Alex poly} and \pref{cond: locally flat pi1=Z disk}, while
\pref{cond: locally flat pi1=Z disk} clearly implies
\pref{cond: pi1=Z disk}. It therefore remains to prove that
\pref{cond: pi1=Z disk} implies \pref{cond: triv Alex poly}.

Assume that \(K\) bounds an embedded disk \(D \subseteq B^4\) with
\(\pi_1(B^4 \smallsetminus D) \cong \mathbb{Z}\). Set
\[
W := B^4 \smallsetminus D
\qquad\text{and}\qquad
M := S^3 \smallsetminus K \subseteq W.
\]
Let \(\widetilde{W}\) be the universal cover of \(W\). Since
\(\pi_1(W) \cong \mathbb{Z}\), we have
\[
H_1(\widetilde{W}) = 0.
\]
Let \(\mathcal{A}(K)\) denote the Alexander module of \(K\). We will show that the Blanchfield pairing on \(\mathcal{A}(K)\), introduced in~\cite{Blanchfield57}, vanishes identically. Since the Blanchfield pairing of a knot is nonsingular (see, for example, \cite[Corollary 1.6]{FrPo17})
this will imply that \(\mathcal{A}(K)=0\), and hence that \(\Delta_K = 1\).

Recall the geometric description of the Blanchfield form. If
\(\alpha,\beta \in \mathcal{A}(K)\) are represented by \(1\)-cycles
%\(a,b \subseteq \widetilde{M}\),
\(a,b \in C_1(\widetilde{M})\),
and if \(Y \in C_2(\widetilde{M})\)
satisfies
\[
\partial Y = p(t)\,a
\]
for some nonzero \(p(t) \in \mathbb{Z}[t^{\pm 1}]\), then
\[
\Bl(\alpha,\beta)
=
\left[
\frac{1}{p(t)}
\sum_{k\in \mathbb{Z}} (Y,t^k b)t^k
\right]
\in \mathbb{Q}(t)/\mathbb{Z}[t^{\pm 1}].
\]

Next, consider the equivariant intersection pairing
\[
\lambda \colon
H_2(\widetilde{W}) \times H_2(\widetilde{W},\widetilde{M})
\longrightarrow
\mathbb{Z}[t^{\pm 1}];
\qquad
\lambda(x,y)
=
\sum_{k\in\mathbb{Z}} (P,t^k Q)t^k,
\]
where \(P \in Z_2(\widetilde{W})\) represents \(x\) and
\(Q \in C_2(\widetilde{W})\) represents \(y\), so that
\(\partial Q \subseteq \widetilde{M}\). Before taking intersections,
we push \(P\) slightly into the interior of \(\widetilde{W}\).
As in Remark~\ref{rmk: what if I-equiv}, it follows from
\cite[Lemma~5]{KojYam79} that this pairing vanishes identically.

Now let $\alpha,\beta \in \mathcal{A}(K)$. Choose $1$-cycles
$a,b \in C_1(\widetilde{M})$
%$a,b \subseteq \widetilde{M}$
representing $\alpha$ and $\beta$.
Since $H_1(\widetilde{W})=0$, there exist $2$-chains
$X_\alpha,X_\beta \in C_2(\widetilde{W})$ such that
\[
\partial X_\alpha=a
\qquad\text{and}\qquad
\partial X_\beta=b.
\]
Since $\mathcal{A}(K)$ is a torsion $\mathbb{Z}[t^{\pm 1}]$-module, there
exist a nonzero Laurent polynomial $p(t)\in\mathbb{Z}[t^{\pm 1}]$ and a
$2$-chain $Y\in C_2(\widetilde{M})$ such that
\[
\partial Y=p(t)a.
\]
Set
\[
Z:=Y-p(t)X_\alpha \in Z_2(\widetilde{W}).
\]
The chain $X_\beta$ represents a class
$[X_\beta]\in H_2(\widetilde{W},\widetilde{M})$. 
%After pushing $Z$ slightly into the interior of $\widetilde{W}$, 
The vanishing of the
equivariant intersection pairing $\lambda$ gives
\begin{align*}
0
&= \lambda([Z],[X_\beta]) \\
&= \sum_{k\in\mathbb{Z}} (Y,t^k b)t^k
   - p(t)\sum_{k\in\mathbb{Z}} (X_\alpha,t^k X_\beta)t^k.
\end{align*}
Therefore
\[
\left[
\frac{1}{p(t)}
\sum_{k\in\mathbb{Z}} (Y,t^k b)t^k
\right]
=
\left[
\sum_{k\in\mathbb{Z}} (X_\alpha,t^k X_\beta)t^k
\right]
=[0]\in \mathbb{Q}(t)/\mathbb{Z}[t^{\pm 1}]
\]
%in $\mathbb{Q}(t)/\mathbb{Z}[t^{\pm 1}]$, since the right-hand side is represented by a Laurent polynomial.
By the geometric definition of the
Blanchfield form, the left-hand side is precisely $\Bl(\alpha,\beta)$.
Thus
\[
\Bl(\alpha,\beta)=0
\qquad\text{for all }
\alpha,\beta \in \mathcal{A}(K),
\]
which completes the proof.
\end{proof}

% So the Blanchfield form on \(\mathcal{A}(K)\) is identically zero. Since the
% Blanchfield form of a knot is nonsingular, it follows that
% \[
% \mathcal{A}(K)=0.
% \]
% Equivalently, \(\Delta_K = 1\). This proves that
% \pref{cond: pi1=Z disk} implies \pref{cond: triv Alex poly}, and
% completes the proof.

The proof of the preceding theorem admits a slightly more local form.
Let \(D\subset B^4\) be any continuously embedded disk with boundary \(K\), and use the notation
\[
    W:=B^4\smallsetminus D,
    \qquad\text{and}\qquad
    M:=S^3\smallsetminus K.
\]
Let \(\widetilde W\) and \(\widetilde M\) denote the infinite cyclic covers of \(W\) and \(M\), respectively. Let $\mathcal{A}(D)=H_1(\widetilde W)$
denote the Alexander module of \(D\). The inclusion \(M\hookrightarrow W\) induces a homomorphism
\[
    \mathcal{A}(K)
    \longrightarrow
    \mathcal{A}(D).
\]
In the proof of the preceding theorem, the hypothesis
\(\pi_1(W)\cong \mathbb Z\) was used only to ensure that
\(\mathcal{A}(D)=0\), so that every class in \(\mathcal{A}(K)\) bounds a
\(2\)-chain in \(\widetilde W\). For a general continuously embedded disk,
the same argument applies precisely to those classes of \(\mathcal{A}(K)\)
that become null-homologous in \(\widetilde W\), that is, to the kernel of the above map. Since the argument is quite similar to the proof of Theorem~\ref{thm: pi1=Z}, we omit the details.

\begin{theorem}\label{thm: ker is isotropic}
If a knot $K$ bounds a continuously embedded disk $D$, then
$\ker(\mathcal{A}(K)\to \mathcal{A}(D))$ is isotropic with respect to the
Blanchfield form. That is, if
$\alpha,\beta\in \ker(\mathcal{A}(K)\to \mathcal{A}(D))$, then
$\Bl(\alpha,\beta)=0$. \qed
\end{theorem}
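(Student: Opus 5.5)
The plan is to rerun the argument from the proof of Theorem~\ref{thm:pi1-Z-disk}, using the hypothesis $\alpha,\beta\in\ker(\mathcal{A}(K)\to\mathcal{A}(D))$ in place of $\mathcal{A}(D)=0$. Set $W=B^4\smallsetminus D$ and $M=S^3\smallsetminus K$. First note that, by Alexander--Pontryagin duality exactly as in Remark~\ref{rmk: what if I-equiv}, $W$ has the homology of a circle. Hence $W$ has an infinite cyclic cover $\widetilde W$ restricting to $\widetilde M$ on the boundary. By \cite[Lemma~5]{KojYam79}, the equivariant intersection pairing
\[
\lambda\colon H_2(\widetilde W)\times H_2(\widetilde W,\widetilde M)\to\Z[t^{\pm1}]
\]
vanishes identically. (If a smooth structure on $W$ is needed for intersection numbers, I would invoke \cite[Theorem~8.2]{Freedman-Quinn:1990-1} as indicated before Theorem~\ref{thm:pi1-Z-disk}.)

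Next, choose $1$-cycles $a,b\in C_1(\widetilde M)$ representing $\alpha,\beta$. Since both classes lie in the kernel, there are $2$-chains $X_\alpha,X_\beta\in C_2(\widetilde W)$ with $\partial X_\alpha=a$ and $\partial X_\beta=b$. Since $\mathcal{A}(K)$ is torsion, pick a nonzero $p(t)$ and $Y\in C_2(\widetilde M)$ with $\partial Y=p(t)a$. Then $Z=Y-p(t)X_\alpha$ is a $2$-cycle in $\widetilde W$, and $X_\beta$ defines a relative class in $H_2(\widetilde W,\widetilde M)$.

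Then compute $\lambda([Z],[X_\beta])$. Push $Z$ slightly into the interior, arranging that the portion $Y$ pushed in meets translates of $X_\beta$ exactly where $Y$ meets translates of $b$ near the boundary. This gives
\[
0=\sum_k (Y,t^kb)t^k-p(t)\sum_k (X_\alpha,t^kX_\beta)t^k .
\]
Dividing by $p(t)$ shows that $\Bl(\alpha,\beta)$ is represented by a Laurent polynomial, so it vanishes in $\Q(t)/\Z[t^{\pm1}]$.

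The only delicate step is the push-off and intersection bookkeeping when $D$ is merely continuously embedded: one must check that the intersection numbers are well defined on the noncompact manifold $\widetilde W$ and that the collar identification near $\widetilde M$ produces exactly the boundary term. I expect this to go through verbatim as in the proof of Theorem~\ref{thm:pi1-Z-disk}, since that proof used $\pi_1(W)\cong\Z$ only to produce $X_\alpha$ and $X_\beta$.
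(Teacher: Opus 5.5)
Your proposal is correct and matches the paper's intended (omitted) proof: it reruns the proof of Theorem~\ref{thm:pi1-Z-disk} in the infinite cyclic cover of $W$. There the $2$-chains $X_\alpha, X_\beta$ came from $\mathcal{A}(D)=0$; here they come from the kernel hypothesis, and the vanishing of $\lambda$ then forces $\Bl(\alpha,\beta)$ to be represented by a Laurent polynomial. One small simplification: $W$ is an open subset of $B^4$, so it is already smooth, and the appeal to \cite[Theorem~8.2]{Freedman-Quinn:1990-1} is only needed for the homology-ball generalization.
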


\section{Further questions}\label{sect: problems}
We close with a few problems that merit further investigation. First, as observed in Section~\ref{subsec:computations}, a theorem of Jin~\cite[Theorem~3.5]{Jin91} determines precisely which pairs of sequences can be realized as $\beta(L_1,L_2)$ and $\beta(L_2,L_1)$.

\begin{problem}\label{problem: classify gamma sequences}
Determine which triples of sequences can be realized by a $3$-component link
$L=(L_1,L_2,L_3)$ with distinguished component $L_1$, together with a Seifert surface $G$ for $L_1$, as
\[
(\beta(L_1,L_2), \beta(L_1,L_3), \gamma(L_1,L_2,L_3,G)).
\]
\end{problem}

Theorem~\ref{thm:classify} classifies when $3$-component links are weakly cobordant to the unlink in terms of the concordance class of the distinguished component, an equality at the level of Alexander modules, and the $\beta$- and $\gamma$-invariants. What happens for links with more than three components?

\begin{problem}
Extend the definitions of weak cobordism, derivatives, and the $\beta$- and $\gamma$-invariants to $n$-component links with a distinguished component. Determine the classification of weak cobordism for $n$-component links.
\end{problem}

{One such extension of the $\gamma$-invariant appears in \cite[Theorem 1.2]{Tsukamoto-Yasuhara:2007} where they are used to study the Conway potential function.}  Addressing the problem above should lead to new lifts of infinite families of Milnor invariants. However, it seems unlikely that an approach based on this notion of derivative can lift families of Milnor invariants not of the form $\bar{\mu}({1^kJ)}$ with $|J|=2$. We therefore ask more generally which other families of Milnor invariants admit integer-valued lifts.

\begin{problem}
Find integer-valued lifts of further infinite families of Milnor invariants.
\end{problem}

In~\cite{CST:2017}, Conant, Schneiderman, and Teichner translate Cochran's $\beta$-invariants into the language of Whitney towers. More precisely, they define what they call a Cochran tower, prove that every $2$-component link with vanishing linking number bounds such a tower, and show how to compute the $\beta$-invariants from its intersections.

\begin{problem}
Define an appropriate class of Whitney towers for $3$-component links from which the $\beta$- and $\gamma$-invariants can be computed.
\end{problem}

For any knot $K\subseteq S^3$, its cone on $K$, denoted  $\operatorname{cone}(K)$, is a continuously embedded disk with
\[
\pi_1\bigl(B^4\smallsetminus \operatorname{cone}(K)\bigr)
\cong
\pi_1(S^3\smallsetminus K).
\]
This is a phenomenon that cannot occur in the smooth or locally flat categories. On the other hand, Theorem~\ref{thm:pi1-Z-disk} shows that it is quite restrictive for a knot to bound a disk whose complement has infinite cyclic fundamental group. More generally, Theorem~\ref{thm: ker is isotropic} shows that there are restrictions on the possible size of
$\ker(\mathcal{A}(K)\to \mathcal{A}(D))$.

\begin{question}
Fix a knot $K$. Which groups arise as
\[
\pi_1(B^4\smallsetminus D),
\]
where $D\subset B^4$ ranges over continuously embedded disks bounded by $K$? Which submodules of $\mathcal{A}(K)$ arise as
\[
\ker(\mathcal{A}(K)\to \mathcal{A}(D))?
\]
\end{question}

%\chris{Motivated by section 7:  What groups can be $\pi_1(continuous embedded disk)$ Refine to what are alexander module kernels.}

%\chris{I do not know enough history to write this.  Maybe discuss.  } 
In~\cite{Orr:1991}, Orr reformulates the $\beta$-invariants in terms of a sequence of classes in $\pi_3(S^2\vee S^3)$. Can an analogous interpretation be given for the $\gamma$-invariants?

\begin{problem}
Can $\gamma(L)$ be interpreted as a sequence of classes in $\pi_3(X)$ for some fixed space $X$?
\end{problem}

In Proposition~\ref{thmbody:seifert-matrix}, we recover the $\gamma$-sequence in terms of the Seifert matrix of a Seifert surface for the distinguished component. The Milnor triple linking number can be computed in terms of the intersections of a triple of Seifert surfaces for $L$~\cite{MelMel03}. This formulation becomes particularly simple for systems of surfaces that intersect only in clasps, called C-complexes~\cite{DaRo2017}; see also~\cite{Cooper82,Cimasoni04} for the precise definition. These objects admit analogues of Seifert matrices. We ask whether these generalized Seifert matrices can recover the $\gamma$-sequence, or the $\beta$-sequences, of $L$.

\begin{problem}
Let $L=(L_1,L_2,L_3)$ be a $3$-component link, and let $G=(G_1,G_2,G_3)$ be a C-complex for $L$. Can the $\beta$- and $\gamma$-sequences of $L$ be determined from the generalized Seifert matrices associated to $G$?
\end{problem}

In Theorem~\ref{thm:notweaklytotheunlink}, we found examples of links for which all $\beta$- and $\gamma$-invariants vanish, but which are not locally flat weakly cobordant to the unlink. Since the proof of Theorem~\ref{thm:notweaklytotheunlink} does not work in the continuous category, we ask the following question.

\begin{problem}\label{problem: classify topological weak cobordism}
Do the $\beta$- and $\gamma$-invariants classify continuous weak cobordism of $3$-component links?
\end{problem}

%\chris{Lift More MIlnor invariants}

 %\chris{Feb 3 we decided to scrap this and replace with an open questions section.  Finite type invt?  Whitney tower?  Other Milnor invt. }

 %Classification of when links are weakly cobordant. / when links have the same beta's and eta's

\bibliographystyle{alpha}
\bibliography{biblio}
\end{document}